\let\oldsqrt\sqrt
\def\sqrt{\mathpalette\DHLhksqrt}
\def\DHLhksqrt#1#2{%
\setbox0=\hbox{$#1\oldsqrt{#2\,}$}\dimen0=\ht0
\advance\dimen0-0.2\ht0
\setbox2=\hbox{\vrule height\ht0 depth -\dimen0}%
{\box0\lower0.4pt\box2}}
\newcommand{\R}{\mathbb{R}} 
\newcommand{\N}{\mathbb{N}} 
\newcommand{\dist}{\textnormal{dist}} 
\newcommand{\diam}{\textnormal{diam}} 
\newcommand{\supp}{\textnormal{supp}} 
\newcommand{\diw}{\textnormal{div}} 
\newcommand{\pair}[2]{\langle\, #1 \,,\, #2 \,\rangle} 
\renewcommand{\phi}{\varphi}
\newcommand{\cD}{{\mathcal D}}
\newcommand{\cE}{{\mathcal E}}
\newcommand{\cF}{{\mathcal F}}
\newcommand{\cG}{{\mathcal G}}
\newcommand{\cH}{{\mathcal H}}
\newcommand{\cL}{{\mathcal L}}
\newcommand{\cM}{{\mathcal M}}
\newcommand{\cS}{{\mathcal S}}
\theoremstyle{definition}
\newtheorem{defi}{Definition}[section]
\newtheorem{remark}[defi]{Remark}
\theoremstyle{plain} 
\newtheorem{thm}[defi]{Theorem}
\newtheorem{prop}[defi]{Proposition}
\newtheorem{lemma}[defi]{Lemma}
\newtheorem{cor}[defi]{Corollary}
\theoremstyle{definition}
\numberwithin{equation}{section} 
\title{On the maximum principle for higher-order fractional Laplacians}
\author{
Nicola Abatangelo\footnote{Universit\'{e} Libre de Bruxelles CP 214, boulevard du Triomphe, 1050 Ixelles (Belgium) nicola.abatangelo@ulb.ac.be.}\hspace{0.5ex},
\ Sven Jarohs\footnote{Institut f\"ur Mathematik, Goethe-Universit\"at, Frankfurt, Robert-Mayer-Stra\ss e 10, D-60054 Frankfurt, jarohs@math.uni-frankfurt.de.}\hspace{0.5ex}, and
Alberto Salda\~{n}a\footnote{Universit\'{e} Libre de Bruxelles CP 214, boulevard du Triomphe, 1050 Ixelles (Belgium), asaldana@ulb.ac.be.}
}
\date{}
\begin{document}
\maketitle
\begin{abstract}
{ We study existence, regularity, and positivity of solutions to linear problems involving higher-order fractional Laplacians $(-\Delta)^s$ for any $s>1$. Using the nonlocal properties of these operators, we provide an explicit counterexample to general maximum principles for $s\in(n,n+1)$ with $n\in\mathbb N$ odd. In contrast, we show the validity of Boggio's representation formula for all integer and fractional powers of the Laplacian $s>0$. As a consequence, maximum principles hold for weak solutions in a ball.  Our proofs rely on a new variational framework based on bilinear forms, on characterizations of $s$-harmonic functions using higher-order Martin kernels, and on a differential recurrence equation for Boggio's formula.  We also discuss the case of the whole space, where maximum principles are a consequence of the fundamental solution.
}
\end{abstract}
{\footnotesize
\begin{center}
\textit{Keywords.} Positivity preserving properties $\cdot$ Boggio's formula $\cdot$ Green function
\end{center}
\begin{center}
\end{center}
}

\section{Introduction}\label{sec:introduction}

In the study of elliptic partial differential equations, most of the analysis has been focused on second order problems, which effectively describe  many natural phenomena. The available results on existence and qualitative properties in this setting have achieved a remarkable degree of sophistication, to a large extent due to very powerful analytic techniques derived from maximum principles, for instance, Harnack inequalities, Hopf Lemmas, and sub- and supersolutions methods.

The theory for elliptic higher-order (\emph{i.e.}, higher than 2) operators, on the other hand, is comparatively underdeveloped.  Some of the main difficulties that appear in their study is precisely the lack of maximum principles, the fact that the set of solutions is usually larger and more complex, and a much more subtle relationship between regularity of solutions, boundary conditions, and smoothness of the domain.

Nevertheless, higher-order operators appear in many important models coming, for instance, from continuum mechanics, biophysics, and differential geometry. They appear, for example, in the study of thin elastic plates, stationary surface diffusion flow, Paneitz-Branson equations, Willmore surfaces, suspension bridges, phase-transition, and membrane biophysics, see \cite{GGS10,PT2001} and references therein. The study of higher-order operators is also motivated by the understanding of basic questions in the theory of partial differential equations, to identify the key elements which yield existence, uniqueness, qualitative properties, and  regularity of solutions. 

The paradigmatic higher-order operator is given by powers of the Laplacian $(-\Delta)^m$, $m\in\mathbb N$, also known as the \emph{polyharmonic operator}. The validity and characterization of positivity preserving properties in this case is an active field of research and many basic questions are still open. 
For example, consider $m=2$, {\it i.e.}, the bilaplacian operator $\Delta^2 u=\Delta(\Delta\, u)$, for which maximum principles are known to be a very delicate issue and do \emph{not} hold in general. 
To obtain well-posedness in boundary value problems, 
the bilaplacian requires extra boundary conditions ({\it b.c.}). 
Two of the most common are \emph{Navier b.c.} $u=\Delta u=0$ on $\partial \Omega$ 
and \emph{Dirichlet b.c.} $u=\partial_\nu u=0$ on $\partial \Omega$.  
The case of the bilaplacian with Dirichlet b.c. is particularly delicate, 
and the geometry of the domain plays an essential role. 
It is known that $\Delta^2 u\geq 0$ in $\Omega$ and $u=\partial_\nu u=0$ on $\partial \Omega$ 
implies that $u\geq 0$ if $\Omega$ is a ball, for example, 
since the corresponding Green function can be computed explicitly in this case and it is nonnegative.
However, if $\Omega\subset\mathbb R^2$ is an ellipse with semi-axis 1 and $\frac{1}{5}$, 
then one can give an elementary counterexample (a polynomial of degree 7) 
showing that the maximum principle does not hold, see \cite{ST94}.  
Many other counterexamples are known in the literature, we refer to \cite{GGS10} 
and the references therein for a survey on positivity preserving properties 
for boundary value problems involving polyharmonic operators. 

\medskip
In this paper, we study the validity of positivity preserving properties for \emph{fractional powers} of the Laplacian $(-\Delta)^s$, $s>1$.   Some known results for this operator are the following\footnote{For publication, this paper was splitted into two parts \cite{AJS16b} and \cite{AJS16a}; specifically, the proofs of Theorem \ref{green:thm} and Proposition \ref{lem:sharm-intro} can be found in \cite{AJS16b}, whereas the proof of Theorem \ref{main:thm} is in \cite{AJS16a}. We also refer to \cite{AJS17a,AJS17b,ADFJS18} for more recent developments regarding higher-order powers of the Laplacian; in particular, reference \cite{AJS17a} focuses on explicit formulas for solutions of boundary value problems on balls, reference \cite{AJS17b} is a study of the different pointwise evaluations of $(-\Delta)^s$, and \cite{ADFJS18} is devoted to Dirichlet boundary value problems in the half-space.}.
General regularity results have been proved in \cite{G15:2}, 
a Poho\v zaev identity and an integration by parts formula is given in \cite{RS15}, 
a comparison between different higher-order fractional operators is done in \cite{MN15}, 
spectral results are obtained in \cite{G15}, 
and other aspects of nonlinear problems are considered in \cite{LM08, FW16b, PP15, MMS15}. 
Furthermore, the operator $(-\Delta)^s$ with $s\geq 1$ appears naturally in Geometry, 
for example, in the prescribed $Q-$curvature equation $(-\Delta)^{N/2}u=Ke^{Nu}$ \cite{CSY97,B14}. 

To begin our discussion on maximum principles, let us consider first the case $(-\Delta)^\sigma$ with $\sigma\in(0,1)$ and $u\in C^{\infty}_{c}(\R^{N})$, $N\in\N$. This operator is known as the \emph{fractional Laplacian} and it can be represented via the principal value integral
\begin{align}
(-\Delta)^{\sigma}u(x) &:=c_{N,\sigma}P.V.\int_{\R^{N}}\frac{u(x)-u(y)}{|x-y|^{N+2\sigma}}\ dy:=
c_{N,\sigma}\lim_{\epsilon\to0^+}\int_{|x-y|>\epsilon} \frac{u(x)-u(y)}{|x-y|^{N+2\sigma}}\ dy
\label{fraclaplace}
\end{align}
for $x\in \R^N$, where $c_{N,\sigma}=:=4^{\sigma}\pi^{-N/2}\sigma(1-\sigma)\frac{\Gamma(\frac{N}{2}+\sigma)}{\Gamma(2-\sigma)}$ 
is a normalization constant and $\Gamma$ denotes the Gamma function.  This operator is used to model \emph{nonlocal} interactions \cite{BV15,NPV11,T78}. Since $(-\Delta)^s$ is a nonlocal operator, boundary value problems are solved by prescribing boundary conditions in the whole complement of the domain (see \emph{\emph{e.g.}} \cite{K11}). In this case, as mentioned in \cite[Remark 4.2]{CS14}, the maximum principle holds in a weak setting for $\sigma\in(0,1)$ using the Dirichlet-to-Neumann extension from \cite{CS07} and testing the equation with $u^-:=-\min\{u,0\}$. 
This also follows directly from the nonlocal bilinear form
\begin{align*}
 \cE_{\sigma}(\varphi,\psi):=
\frac{c_{N,\sigma}}{2}\int_{\R^N}\int_{\R^N}\frac{(\varphi(x)-\varphi(y))(\psi(x)-\psi(y))}{|x-y|^{N+2\sigma}}\ dx\ dy=\int_{\R^N} |\xi|^{2\sigma}\cF \varphi (\xi)\cF \psi (\xi)\ d\xi,
\end{align*}
where $\cF$ denotes the Fourier transform, see \cite{JW14,sven:thesis}.  In particular, if $\Omega\subset\mathbb R^N$ is an open set, 
$u$ is in the fractional Sobolev space $H^s(\R^N)$, $u\geq0$ in $\R^N\setminus \Omega$, and $ \cE_{\sigma}(u,\varphi)\geq 0$ for all \emph{nonnegative} $\varphi\in H^\sigma(\R^N)$ with $\varphi\equiv 0$ in $\R^N\backslash\Omega$, then $u\geq0$ in $\Omega$. 

To study the higher-order case $s>1$ we extend this variational setting. Namely, fix $s=m+\sigma$ with $m\in \N$ and $\sigma\in(0,1)$. For $\Omega\subset \R^N$ open we define the fractional Sobolev space with zero boundary conditions
\begin{align}\label{Hs0:def}
\cH^{s}_0(\Omega)&:=\{u\in H^{s}(\R^N)\;:\; u\equiv 0\;\text{on $\R^N\setminus \Omega$}\}
\end{align}
equipped with the norm 
$\|u\|_{\cH^s_0(\Omega)}:=(\sum_{|\alpha|\leq m}\|\partial^{\alpha} u\|_{L^2(\Omega)}^2+\cE_{s}(u,u))^{\frac{1}{2}}$, 
where
\begin{equation}\label{bilin:def}
\cE_{s}(u,v):=\left\{\begin{aligned}
&\cE_{\sigma}(\Delta^{\frac{m}{2}}  u,\Delta^{\frac{m}{2}} v),&& \quad \text{if $m$ is even,}\\
&\sum_{k=1}^{N}\cE_{\sigma}(\partial_k \Delta^{\frac{m-1}{2}} u,\partial_k \Delta^{\frac{m-1}{2}} v),&&
\quad  \text{if $m$ is odd,}
\end{aligned}\right.
\end{equation}
for $u,v\in \cH^{s}_0(\Omega)$.
We now introduce the notion of weak solution. For $f\in L^2_{loc}(\Omega)$ we say that a function $u\in H^{s}(\R^N)$ is a \emph{weak supersolution} of 
\begin{equation}\label{wsol:def1}
    (-\Delta)^{s}u= f\quad\text{ in $\Omega$,}\qquad u=0 \quad\text{ on $\R^N\setminus \Omega$},
\end{equation}
if $u\geq 0$ on $\R^N\setminus \Omega$ and for all $\varphi\in \cH^{s}_0(\Omega)$ with compact support in $\R^N$ we have
\begin{align}\label{wsol:def}
\cE_{s}(u,\varphi)= \int_{\Omega}f(x)\varphi(x)\ dx.
\end{align}
We call $u\in H^{s}(\R^N)$ a \textit{weak subsolution} of \eqref{wsol:def1} if $-u$ is a weak supersolution of \eqref{wsol:def1}. If $u\in H^{s}(\R^N)$ is a weak super- and subsolution of (\ref{wsol:def1}), then we call $u$ a \emph{weak solution} of (\ref{wsol:def1}).

Our first result shows that the (weak) maximum principle does \emph{not} hold in general for weak solutions.
\begin{thm}\label{main:thm}
Let $N\in\N$, $D\subset\R^N$ be an open set, $s\in(k,k+1)$ for some $k\in\mathbb N$ odd, and let $A$ be a nonempty ball compactly contained in $\R^N\setminus D$. There is a smooth positive function $f\in C^\infty(\overline{\Omega})$ such that the problem \eqref{wsol:def1} in $\Omega=D\cup A$ admits a sign-changing weak solution $u\in\cH_0^{s}(\Omega)\cap C(\R^N)\cap C^{\infty}(\Omega)$ with $u\lneq 0$ in $D$ and $u\gneq 0$ in $A$.
\end{thm}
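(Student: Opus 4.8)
The plan is to exploit the nonlocality of $(-\Delta)^s$ by building a solution that is forced to be negative on a region $D$ where no right-hand side "pushes" it up, while being positive on a far-away ball $A$ where all the mass of $f$ concentrates. First I would use Boggio's representation formula (valid for all $s>0$ by the results stated above) on a large ball $B\supset\supset\overline\Omega$: pick $g\in C_c^\infty(A)$ strictly positive on $A$ and let $w$ be the weak solution of $(-\Delta)^s w=g$ in $B$ with $w=0$ on $\R^N\setminus B$. By positivity of Boggio's Green function $G_{B,s}$, we get $w>0$ throughout $B$, hence in particular $w>0$ on $\overline\Omega$. The key point is the sign of $(-\Delta)^sw$ outside $A$: for $x\in D$ (which is disjoint from $A=\supp g$) we have $(-\Delta)^sw(x)=-c\int_{\R^N\setminus A}(\cdots)$, and since $k$ is odd the relevant kernel/parity makes this quantity \emph{negative} on $D$ — this is exactly the sign mechanism responsible for the failure of the maximum principle in the polyharmonic and higher-order fractional setting.

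Next I would correct $w$ to vanish outside $\Omega$. Let $h:=w|_{\R^N\setminus\Omega}$, which is a positive function supported in $B\setminus\Omega$, and let $v$ be the $s$-harmonic-type function in $\Omega$ with exterior datum $h$: concretely, solve $(-\Delta)^sv=0$ in $\Omega$, $v=h$ on $\R^N\setminus\Omega$, using the higher-order Martin kernel / Boggio machinery on $\Omega$ (or, more robustly, on the ball $A$ alone plus the component $D$). Because $h\ge 0$, positivity of the relevant kernels gives $v\ge 0$ in $\Omega$; moreover $v$ is smooth in $\Omega$ and continuous up to the boundary by the regularity results cited. Then set $u:=w-v$. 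By construction $u=0$ on $\R^N\setminus\Omega$, $u\in\cH_0^s(\Omega)$, and $(-\Delta)^su=(-\Delta)^sw-0=:f$ in $\Omega$, where $f=g$ on $A$ (strictly positive) and $f=(-\Delta)^sw<0$... — wait, I actually want $f>0$ everywhere, so instead I add a small, strictly positive, smooth bump supported in $D$: replace $g$ by $g+\delta\chi$ with $\chi\in C_c^\infty(D)$, $\chi>0$ on $D$, and $\delta>0$ small. Redo the construction with this new right-hand side; for $\delta$ small the value of $u$ on $A$ stays strictly positive (it is a small perturbation of $w|_A-v|_A>0$), while on $D$ we must check $u<0$.

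The heart of the argument — and the main obstacle — is the last sign check: showing $u=w-v<0$ on $D$ for $\delta$ small. At $\delta=0$, on $D$ we have $(-\Delta)^s u=(-\Delta)^sw=:f_0<0$ in $D$ with $u=0$ on $\partial$-data relative to $D$ in the appropriate nonlocal sense — but $u$ is not itself zero outside $D$, it equals $w-v$ which could have either sign on $A$, so I cannot directly invoke a maximum principle on $D$ alone. The clean way is to argue directly from the global Boggio/Green representation on $\Omega=D\cup A$: write $u(x)=\int_\Omega G_{\Omega,s}(x,y)f(y)\,dy$, and observe that $G_{\Omega,s}$ on the \emph{disconnected} domain $\Omega$ restricted to $D\times A$ need not be positive; in fact for $s\in(k,k+1)$, $k$ odd, the explicit counterexample amounts to showing $\int_A G_{\Omega,s}(x,y)\,dy<0$ for $x\in D$, which reduces (via the differential recurrence equation for Boggio's formula) to a concrete one-dimensional sign computation for the relevant power. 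So the real work is: (i) make precise the Green-function representation on $D\cup A$ using the variational framework and the extension of Boggio's formula; (ii) perform the explicit integral/parity computation showing the cross term has the "wrong" sign when $k$ is odd; (iii) a routine continuity/perturbation argument in $\delta$ to upgrade $f_0$ to a strictly positive $f$ while preserving the strict signs $u<0$ on $D$ and $u>0$ on $A$; and (iv) collect the regularity ($u\in\cH_0^s(\Omega)\cap C(\R^N)\cap C^\infty(\Omega)$) from the cited results. I expect step (ii) — pinning down the exact sign of the cross-term of Boggio's kernel via the recurrence — to be where all the genuine content lies.
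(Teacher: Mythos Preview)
Your construction has two sign errors that make it unworkable, and the paper's approach is both simpler and avoids these traps entirely.

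First, you solve $(-\Delta)^s w=g$ on a large ball $B\supset\supset\overline\Omega$ and then claim $(-\Delta)^s w<0$ on $D$. But $D\subset B$, so $(-\Delta)^s w=g=0$ on $D$; there is no sign to exploit there. Second, and more seriously, you assert that the $s$-harmonic extension $v$ of nonnegative exterior data $h\ge0$ satisfies $v\ge0$ in $\Omega$ ``by positivity of the relevant kernels.'' For $s\in(k,k+1)$ with $k$ odd this is precisely what \emph{fails}: Corollary~\ref{harm:cor} in the paper shows that on a ball, nonnegative exterior data forces the $s$-harmonic extension to be \emph{nonpositive} inside. With the correct sign $v\le0$ you get $u=w-v\ge w>0$ throughout $\Omega$, which is not sign-changing. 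Your later fallback to the Green function $G_{\Omega,s}$ on the disconnected domain $\Omega=D\cup A$ is not available either: Boggio's formula is only established on balls.

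The paper's proof bypasses all of this. It takes $u:=a\psi-g$ directly, where $g\in C_c^\infty(D)$ is a nonnegative bump and $\psi\in\cH_0^s(A)$ is the explicit torsion function on the ball $A$ from Corollary~\ref{sol:ball}. The signs of $u$ are then trivial: $u=-g\le0$ on $D$ and $u=a\psi\ge0$ on $A$. All the work goes into showing $f:=(-\Delta)^s u>0$ on $\overline\Omega$, and the key tool is Lemma~\ref{boundedevaluation2}: for functions with disjoint supports, $\cE_s(g,\varphi)=(-1)^{m+1}C\int_U\int_D\frac{\varphi(x)g(y)}{|x-y|^{N+2s}}\,dx\,dy$. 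Since $m$ is odd this cross-term is \emph{positive}, so the nonlocal contribution of $a\psi$ to $f$ on $D$ is $+aC\int_A\psi(y)|x-y|^{-N-2s}\,dy$, which for $a$ large dominates $(-\Delta)^s g$ on $D$; symmetrically on $A$, $f=a$ minus a bounded term. No Green function on $\Omega$, no harmonic extension, no perturbation argument in $\delta$ is needed.
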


The proof of Theorem \ref{main:thm} is made via an explicit counterexample, which exploits the nonlocal nature of the operator and the fact that the domain is disconnected.  Although our approach to prove Theorem \ref{main:thm} cannot be used for $s\in(k,k+1)$ with $k\in\mathbb N$ even, we do not expect that general maximum principles hold for any $s>1$.  We refer to \cite{KKM89} for counterexamples involving even powers of the Laplacian and to \cite{S2016} for a counterexample to the trilaplacian, which seems to be the only available counterexample for odd powers.

Theorem \ref{main:thm} is particularly interesting for $s\in(1,\frac{3}{2})$, since in this case \cite[Th\'{e}or\`{e}me 1]{M89} implies that $u^-\in H^s(\Omega)$ if $u\in H^s(\Omega)$ and this is the main ingredient in the proof of maximum principles for $s\in(0,1]$, which uses $u^-$ as a test function. Indeed, the proof of Theorem \ref{main:thm} reveals that an essential role is played by the following simple fact due to integration by parts: for $u\in H^s(\R^N)$, $\phi\in C^\infty_c(\R^N)$, and $u,\phi\geq 0$ with $\supp\, u\cap \supp\, \phi=\emptyset$, we have that $\cE_s(u,\phi)<0$ if $s\in(0,1)$ and $\cE_s(u,\phi)>0$ if $s\in(k,k+1)$ with $k\in\mathbb N$ odd.  This is the main reason why the proof of maximum principles for $s\in(0,1)$ cannot be extended to $s\in(1,\frac{3}{2})$, see Remark \ref{negative_interaction}. Another consequence of this fact is the following remarkable property.
\begin{cor}\label{smoothgoutside}
 Let $m\in\N_0$, $\sigma\in(0,1)$, $s=m+\sigma$, $\Omega\subset \R^N$ be a smooth bounded domain, and $g\in C^{\infty}_c(\Omega)\backslash\{0\}$ be a nonnegative function, then $(-1)^{m+1}(-\Delta)^sg>0$ in $\R^N\setminus \overline{\Omega}$.
\end{cor}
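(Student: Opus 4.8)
The plan is to establish the claim by a direct pointwise computation of $(-\Delta)^s g$ on $\R^N\setminus\overline\Omega$; this is cleaner than arguing through $\cE_s$ and, as a bonus, works uniformly in $m\in\N_0$. The two ingredients are the factorisation $(-\Delta)^s=(-\Delta)^m\circ(-\Delta)^\sigma$ and the \emph{locality} of the polyharmonic operator $(-\Delta)^m=(-1)^m\Delta^m$. I would first record the reduction: since $g\in C^\infty_c(\Omega)$ is Schwartz, $\cF g$ decays faster than any polynomial, so for every multi-index $\beta$ the maps $\xi\mapsto\xi^\beta|\xi|^{2\sigma}\cF g(\xi)$ and $\xi\mapsto\xi^\beta|\xi|^{2s}\cF g(\xi)$ lie in $L^1(\R^N)$ (near the origin $|\xi|^{2\sigma}$ and $|\xi|^{2s}$ are locally integrable because $2\sigma,2s>-N$). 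Hence $(-\Delta)^\sigma g$ and $(-\Delta)^s g$ are both classical smooth functions on $\R^N$, and comparing Fourier symbols via $|\xi|^{2s}\cF g=|\xi|^{2m}\bigl(|\xi|^{2\sigma}\cF g\bigr)$ gives the identity $(-\Delta)^s g=(-\Delta)^m\bigl[(-\Delta)^\sigma g\bigr]$ on all of $\R^N$.

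Next I would evaluate $(-\Delta)^\sigma g$ outside $\overline\Omega$. For $x\in\R^N\setminus\overline\Omega$ the function $g$ vanishes on a neighbourhood of $x$, so the principal value in \eqref{fraclaplace} is an absolutely convergent integral and
\begin{equation*}
(-\Delta)^\sigma g(x)=-c_{N,\sigma}\int_\Omega\frac{g(y)}{|x-y|^{N+2\sigma}}\,dy=:-w(x),
\end{equation*}
where $w>0$ on $\R^N\setminus\overline\Omega$ because $c_{N,\sigma}>0$ and $g\gneq0$. The function $w$ is smooth away from $\supp g$ and may be differentiated there under the integral sign. The heart of the argument is then the kernel identity: a radial computation gives $\Delta_x|x-y|^{-(N+2\sigma)}=2(N+2\sigma)(\sigma+1)|x-y|^{-(N+2(\sigma+1))}$ for $x\ne y$, i.e.\ each Laplacian raises $\sigma$ by $1$ and multiplies by a positive constant; iterating $m$ times,
\begin{equation*}
\Delta_x^m\,|x-y|^{-(N+2\sigma)}=d_{m,\sigma}\,|x-y|^{-(N+2\sigma+2m)},\qquad d_{m,\sigma}:=\prod_{j=0}^{m-1}2(N+2\sigma+2j)(\sigma+1+j)>0.
\end{equation*}
Consequently $\Delta^m w(x)=c_{N,\sigma}\,d_{m,\sigma}\int_\Omega g(y)|x-y|^{-(N+2\sigma+2m)}\,dy>0$ for every $x\in\R^N\setminus\overline\Omega$.

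To conclude, I would fix $x\in\R^N\setminus\overline\Omega$: since $(-\Delta)^\sigma g=-w$ on an open neighbourhood of $x$ and $(-\Delta)^m$ is a local differential operator, the reduction step gives
\begin{equation*}
(-\Delta)^s g(x)=(-\Delta)^m\bigl[(-\Delta)^\sigma g\bigr](x)=-(-1)^m\Delta^m w(x)=(-1)^{m+1}\Delta^m w(x),
\end{equation*}
so that $(-1)^{m+1}(-\Delta)^s g(x)=\Delta^m w(x)>0$, which is the assertion. As a by-product, testing against a nonnegative $\phi\in C^\infty_c(\R^N\setminus\overline\Omega)$ yields $\cE_s(g,\phi)=\int_{\R^N}\phi\,(-\Delta)^s g$ with sign $(-1)^{m+1}$, which reproves the integration-by-parts fact quoted before the corollary and extends it to all $m\in\N_0$.

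I do not expect a genuine obstacle here. The only points requiring routine care are the bookkeeping in the reduction step — that $(-\Delta)^s g$ really is a classical smooth function and factorises through $(-\Delta)^\sigma g$ — and the differentiation of $w$ under the integral sign; both are immediate once one observes that $\supp g$ lies at positive distance from every compact subset of $\R^N\setminus\overline\Omega$.
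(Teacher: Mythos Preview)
Your argument is correct. The factorisation $(-\Delta)^s=(-\Delta)^m(-\Delta)^\sigma$ is unproblematic for $g\in\cS$, the pointwise evaluation of $(-\Delta)^\sigma g$ outside $\supp g$ is legitimate, differentiation under the integral is justified by the positive distance from $\supp g$, and the kernel identity $\Delta_x^m|x-y|^{-(N+2\sigma)}=d_{m,\sigma}|x-y|^{-(N+2s)}$ with $d_{m,\sigma}>0$ is exactly right.

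The paper takes a different route: it argues weakly, testing against a nonnegative $\varphi\in\cH^s_0(\R^N\setminus\Omega)$ and invoking the identity $\cE_s(g,\varphi)=(-1)^{m+1}C\int_{\R^N\setminus\Omega}\int_\Omega\varphi(x)g(y)|x-y|^{-N-2s}\,dy\,dx$ (Lemma~\ref{boundedevaluation2}), then concluding by arbitrariness of $\varphi$. The core computation is the same as yours---the proof of Lemma~\ref{boundedevaluation2} also iterates the Laplacian on $|x-y|^{-(N+2\sigma)}$---but the packaging differs: the paper moves the derivatives onto the test function via the bilinear form, whereas you move them onto the kernel directly. Your approach is more self-contained and yields the pointwise strict inequality immediately without passing through a density/duality step; the paper's approach has the advantage that Lemma~\ref{boundedevaluation2} is a reusable building block (it is also invoked in the proofs of Theorem~\ref{main:thm} and Corollary~\ref{harm:cor}), so the corollary becomes a one-line consequence of machinery already in place.
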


Note that this is a purely nonlocal phenomenon. Moreover, a direct consequence of Theorem \ref{main:thm} is that maximum principles cannot hold for weak supersolutions in more general domains.
\begin{cor}\label{main:cor}
Let $\Omega\subset\R^N$ be an open set such that $\R^N\setminus \Omega$ has nonempty interior and let $s\in(k,k+1)$ for some $k\in\mathbb N$ odd. There is a weak supersolution $u\in H^s(\R^N)\backslash\{0\}$ of \eqref{wsol:def1} with $f\geq 0$ such that $u\lneq 0$ in $\Omega$.
\end{cor}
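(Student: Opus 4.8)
The plan is to read Corollary~\ref{main:cor} off directly from Theorem~\ref{main:thm}: the weak solution produced there has a \emph{strictly positive} right-hand side, so it is in particular a weak supersolution of the \emph{homogeneous} equation, and it is already known to be negative on an entire component of its domain. The only work is to match the two problems and to keep track of the exterior/test-function conditions.

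\medskip
\noindent Concretely, I would argue as follows. By hypothesis $\R^N\setminus\Omega$ has nonempty interior, hence it contains a nonempty ball $A$ that is compactly contained in $\R^N\setminus\Omega$. Set $D:=\Omega$. Then $D$ is open and $A$ is a nonempty ball compactly contained in $\R^N\setminus D$, so Theorem~\ref{main:thm} applies (with the given $N$ and $s$) to $D$ and $A$ and yields a smooth positive function $f\in C^\infty(\overline{D\cup A})$ and a weak solution
\[
u\in\cH^s_0(D\cup A)\cap C(\R^N)\cap C^\infty(D\cup A)
\]
of \eqref{wsol:def1} in $D\cup A$ with $u\lneq 0$ in $D=\Omega$ and $u\gneq 0$ in $A$. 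In particular $u\in H^s(\R^N)\setminus\{0\}$ and $u\lneq 0$ in $\Omega$, which are two of the three assertions. The remaining point is to check that this same $u$ is a weak supersolution of \eqref{wsol:def1} in $\Omega$ with $f\equiv 0$. First, $\R^N\setminus\Omega=(\R^N\setminus(D\cup A))\cup A$, and on $\R^N\setminus(D\cup A)$ one has $u\equiv 0$ since $u\in\cH^s_0(D\cup A)$, while on $A$ one has $u\geq 0$ (indeed $u\gneq 0$); hence $u\geq 0$ on $\R^N\setminus\Omega$. Second, let $\varphi\in\cH^s_0(\Omega)$ be nonnegative with compact support. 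Since $\Omega\subset D\cup A$, the definition \eqref{Hs0:def} gives $\varphi\in\cH^s_0(D\cup A)$, still with compact support, and $\varphi\equiv 0$ on $\R^N\setminus\Omega$, in particular on $A$. Testing the weak formulation \eqref{wsol:def} of $u$ on $D\cup A$ against $\varphi$ and splitting the domain yields
\[
\cE_s(u,\varphi)=\int_{D\cup A}f\varphi\,dx=\int_\Omega f\varphi\,dx+\int_A f\varphi\,dx=\int_\Omega f\varphi\,dx\geq 0,
\]
using $\varphi\equiv 0$ on $A$ together with $f\geq 0$ and $\varphi\geq 0$. Thus $\cE_s(u,\varphi)\geq 0=\int_\Omega 0\cdot\varphi\,dx$ for every admissible nonnegative $\varphi$, which is exactly the defining property of a weak supersolution of the homogeneous problem in $\Omega$, and the corollary follows.

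\medskip
\noindent I do not expect a genuine obstacle here: the deduction is essentially bookkeeping once Theorem~\ref{main:thm} is in hand. The one place that must be watched is the compatibility of the function spaces, namely that $\cH^s_0(\Omega)\subset\cH^s_0(D\cup A)$ (clear from $\Omega\subset D\cup A$ and \eqref{Hs0:def}) and that every admissible test function vanishes on the auxiliary ball $A$. It is precisely there that the positivity $u\gneq 0$ in $A$ produced by Theorem~\ref{main:thm} is used twice over: to preserve the exterior inequality $u\geq 0$ on $\R^N\setminus\Omega$, and to annihilate the contribution of $f$ on $A$ so that the inhomogeneous weak solution on $D\cup A$ degrades exactly into a weak supersolution of the homogeneous equation on $\Omega$.
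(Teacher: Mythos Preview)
Your proposal is correct and is precisely the deduction the paper has in mind: the authors do not spell out a separate proof of Corollary~\ref{main:cor} but simply state that it can be inferred from Theorem~\ref{main:thm}, and your argument---taking $D=\Omega$, choosing $A$ in the interior of $\R^N\setminus\Omega$, and observing that the positive right-hand side makes the weak solution on $D\cup A$ a weak supersolution of the homogeneous problem on $\Omega$---is exactly that inference carried out in full.
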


In particular, maximum principles for $(-\Delta)^s$ may only hold for \emph{solutions} and only in some domains. 


Next, we show that maximum principles for weak solutions hold on balls and are a consequence of an explicit representation formula. In the following, $\delta_y$ denotes the Dirac measure centered at $y\in\R^N$ and $C^r(B)=C^{n,l}(B)$ for $r=n+l$ with $n\in\N_0$ and $l\in(0,1]$.

\begin{thm}\label{green:thm}
Let $\sigma\in(0,1],$ $m\in\N$, $s=m+\sigma$, $N\in \N$, $B\subset \R^N$ the unitary ball, and let 
\begin{equation}\label{greenball-intro}
 \cG_s(x,y) :=k_{N,s} |x-y|^{2s-N}\int_0^{\rho(x,y)}\frac{v^{s-1}}{(v+1)^\frac{N}{2}}\ dv \qquad\text{ for }x,y\in\R^N,\ {x\neq y},
\end{equation}
where
\begin{equation}\label{greenconst-intro}
\rho(x,y):=\frac{(1-|x|^2)_+(1-|y|^2)_+}{|x-y|^2},\qquad k_{N,s}:=\frac{\Gamma(\frac{N}{2})}{\pi^\frac{N}{2}4^s\Gamma(s)^2}.
\end{equation}
Then $\cG_s(\cdot,y)$ is a distributional solution of $(-\Delta)^s v=\delta_y$ in $B$ for every $y\in B$. Moreover, if $f\in C^{\alpha}(B)$  for some $\alpha\in(0,1)$ with $2s+\alpha\not\in \N$ and
\begin{align}\label{Gsu}
u:\R^N\to\R\quad \text{ is given by}\quad u(x)&:=\ \int_{B} \cG_{s}(x,y)\,f(y)\ dy,
\end{align}
then $u\in C^{2s+\alpha}_{loc}(B)\cap C_0^s(B)\cap \cH^s_0(B)$ is the unique weak solution of \eqref{wsol:def1} with $\Omega=B$. Furthermore, $(-\Delta)^{m}(-\Delta)^{\sigma}u(x)=f(x)$ pointwise for every $x\in B$, where the fractional Laplacian $(-\Delta)^\sigma u$ is evaluated as in \eqref{fraclaplace}, and there is $C>0$ such that 
\begin{align}\label{decay}
\|\operatorname{dist}(\cdot,\partial B)^{-s}u\|_{L^{\infty}(B)}<C\|f\|_{L^{\infty}(B)}\qquad \text{ for $s\geq 1$}. 
\end{align}
\end{thm}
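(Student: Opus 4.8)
\emph{The plan.} The whole statement can be derived from a single identity — a \emph{differential recurrence} for the Boggio kernel — together with the two classical cases $\sigma\in(0,1)$ and $\sigma=1$ and the properties of higher-order Martin kernels. Write $s=m+\sigma$ with $m\in\N$, $\sigma\in(0,1]$, and set $F_s(t):=\int_0^tv^{s-1}(v+1)^{-N/2}\,dv$, so that $\cG_s(x,y)=k_{N,s}|x-y|^{2s-N}F_s(\rho(x,y))$ with $\rho$ as in \eqref{greenconst-intro}. The crucial step is to prove that, as an identity of distributions on $\R^N$ (equivalently, pointwise for $x,y\in B$, $x\neq y$),
\[
-\Delta_x\cG_s(x,y)=\cG_{s-1}(x,y)+c_{N,s}\,h_s(x,y),
\]
where $h_s(\cdot,y)$ is an explicit kernel that vanishes in $\R^N\setminus\overline B$ and is $(s-1)$-harmonic in $B$, i.e.\ $(-\Delta)^{s-1}h_s(\cdot,y)=0$ in $B$; here no boundary contribution appears because for $s>1$ one has $\cG_s(\cdot,y)\in W^{2,1}_{loc}(\R^N)$, being of class $C^1$ across $\partial B$. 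To obtain the recurrence I would compute $\Delta_x\bigl(|x-y|^{2s-N}F_s(\rho)\bigr)$ by the chain and product rules, expressing $\nabla_x\rho$, $\Delta_x\rho$ and $|\nabla_x\rho|^2$ in terms of $|x-y|$, $1-|x|^2$ and $1-|y|^2$; collecting the resulting terms, the claim reduces to a second-order ODE for the incomplete Beta function $F_s$ — a hypergeometric-type identity linking $F_s,F_s',F_s''$ to $F_{s-1}$ — which simultaneously produces $\cG_{s-1}$ and the closed form of $h_s$. From that closed form — a Poisson/Martin-kernel-type expression with pole at $y/|y|^2$ — one reads off that $h_s(\cdot,y)$ vanishes outside $B$ and is $(s-1)$-harmonic, using the description of $s$-harmonic functions and higher-order Martin kernels established earlier in the paper. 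I expect this step to be the main obstacle: both guessing the correct remainder $h_s$ and carrying out the computation require care, and this is precisely the ``differential recurrence'' flagged in the introduction.

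Granting the recurrence, $(-\Delta)^s\cG_s(\cdot,y)=\delta_y$ in $B$ follows by induction on $m$, the case $m=0$ being the classical fact that $\cG_\sigma$ is the Green function of $(-\Delta)^\sigma$ on $B$ for $\sigma\in(0,1)$ (Boggio, M.~Riesz) and of $-\Delta$ for $\sigma=1$. For the inductive step one uses $(-\Delta)^s=(-\Delta)^{s-1}\circ(-\Delta)$, applies the recurrence, and observes that $(-\Delta)^{s-1}\cG_{s-1}(\cdot,y)=\delta_y$ in $B$ by the inductive hypothesis while $(-\Delta)^{s-1}h_s(\cdot,y)=0$ in $B$ since $h_s(\cdot,y)$ is $(s-1)$-harmonic. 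All the duality pairings involved are legitimate because $\cG_s(\cdot,y)$ and $\cG_{s-1}(\cdot,y)$ are compactly supported in $\overline B$ and belong to $L^1(\R^N)$. The facts that $\cG_s\ge0$, that $\cG_s(x,y)\simeq|x-y|^{2s-N}\min\{1,\rho(x,y)^s\}$, and that $\cG_s(\cdot,y)$ decays like $\dist(\cdot,\partial B)^s$ are all read directly off \eqref{greenball-intro}.

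For $f\in C^\alpha(B)$ and $u$ as in \eqref{Gsu}: first, the bounds on $\cG_s$ give $u\in L^\infty(\R^N)$ with support in $\overline B$ and, using $\rho(x,y)\le(1-|x|^2)(1-|y|^2)|x-y|^{-2}$, the estimate $\int_B\cG_s(x,y)\,dy\lesssim(1-|x|^2)^s$, which is \eqref{decay}. Next, Fubini and the distributional identity yield, for all $\varphi\in C^\infty_c(B)$,
\[
\int_Bu\,(-\Delta)^s\varphi=\int_Bf(y)\Bigl(\int_B\cG_s(x,y)(-\Delta)^s\varphi(x)\,dx\Bigr)dy=\int_Bf\varphi,
\]
so $(-\Delta)^su=f$ distributionally in $B$. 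Writing $w:=(-\Delta)^\sigma u$ one gets $(-\Delta)^mw=f$ in $B$, hence $w\in C^{2m+\alpha}_{loc}(B)$ by interior Schauder estimates for the polyharmonic operator, and then, by interior regularity for the fractional Laplacian (cf.\ \cite{G15:2,RS15}), $u\in C^{2s+\alpha}_{loc}(B)$; once $u$ has this regularity, $(-\Delta)^\sigma u$ evaluated pointwise through \eqref{fraclaplace} coincides with $w$, so $(-\Delta)^m(-\Delta)^\sigma u=f$ holds pointwise in $B$.

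Finally, the interior estimate together with the $\dist(\cdot,\partial B)^s$ decay gives $u\in C_0^s(B)$, in particular $u\in\cH^s_0(B)$. By Plancherel, $\cE_s(u,\varphi)=\int_Bu\,(-\Delta)^s\varphi$ for $u\in\cH^s_0(B)$ and $\varphi\in C^\infty_c(B)$, so the displayed identity gives $\cE_s(u,\varphi)=\int_Bf\varphi$ on $C^\infty_c(B)$, and this extends to every admissible test function by density of $C^\infty_c(B)$ in $\cH^s_0(B)$; hence $u$ is a weak solution of \eqref{wsol:def1} with $\Omega=B$. Uniqueness follows from the coercivity of $\cE_s$ on $\cH^s_0(B)$ and Lax–Milgram. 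Both the density and the coercivity are part of the variational framework developed in the paper.
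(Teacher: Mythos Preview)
Your overall architecture matches the paper exactly: the differential recurrence $-\Delta_x\cG_s=\cG_{s-1}+c\cdot(\text{$(s-1)$-harmonic kernel})$, the identification of the remainder via higher-order Martin kernels, and the induction on $m$ starting from the classical cases $\sigma\in(0,1]$ are precisely Lemma~\ref{Lap:green}, Lemma~\ref{Martin:rep}/Corollary~\ref{Q:harmonic}, and the proof in Section~\ref{ballcase}. Your interior regularity argument (set $w=(-\Delta)^\sigma u$, use polyharmonic Schauder for $(-\Delta)^m w=f$, then fractional regularity to lift back to $u$) is a legitimate alternative to the paper's explicit decomposition $u=u_1-Cu_2$ via~\eqref{eq:composedgreen}; both routes yield $u\in C^{2s+\alpha}_{loc}(B)$.

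There is, however, a genuine gap in your last paragraph. The implication ``$u\in C_0^s(B)$, in particular $u\in\cH^s_0(B)$'' is not valid: the inclusion $C^s_0(B)\subset\cH^s_0(B)$ fails in general. Already for $m=0$ one has $C^\sigma_c(\R^N)\not\subset H^\sigma(\R^N)$ (lacunary Weierstrass-type examples), and Lemma~\ref{lem:smooth-to-weak} in the paper needs the extra $\epsilon>0$ for exactly this reason. The paper flags this point in the introduction (the paragraph around the example $u(x)=(1-|x|^2)_+^s$) and handles it by a separate argument in Theorem~\ref{reg-ball-a}: one writes $u=u_1-Cu_2$ with $u_i=\cG_1\ast v_i$, uses the induction hypothesis and~\eqref{G1-map2} for $u_1$, and for $u_2$ shows $v_2\in H^{s-2}(B)$ via duality with $\cH^{2-s}_0(B)$ when $1<s<\tfrac32$, $L^p$-estimates when $s=\tfrac32$, and complex interpolation between weighted Sobolev spaces (cf.~\cite{Lototsky}) when $s>2$, finally invoking~\eqref{eq:norm} and Proposition~\ref{bd:reg} to pass from $H^s(B)$ to $\cH^s_0(B)$.

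A smaller point: ``interior smoothness plus $|u(x)|\lesssim d(x)^s$'' does not by itself give $u\in C^s_0(B)$ either; one needs comparable weighted bounds on the derivatives up to order $m$. In the paper this comes out of the same decomposition, whereas your bootstrapping route would need to supplement the argument with boundary estimates of that type.
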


The function $\cG_s$ is known as \emph{Boggio's formula}, see \cite{B16,GGS10,DG2016}. The proof of Theorem \ref{green:thm} is based on a differential recurrence formula for $\cG_s$ in terms of $\cG_{s-1}$ and an explicit function $P_{s-1}$ which is $(s-1)$-harmonic in the ball, see Lemma \ref{Lap:green} below.  Since the validity of Boggio's formula is known for $s\in(0,1]$, this allows us to implement an induction argument to extend this result to all $s>1$. We remark that our approach also provides an alternative proof for $s\in \N$. Two key elements in the proof are an elementary {\textemdash but lengthy\textemdash} pointwise calculation of $-\Delta_x\cG_s(x,y)$ for $y\neq x$ and $s>1$ (see Lemma \ref{Lap:green}) and the introduction of {\it higher-order Martin kernels}
\[
M_s(x,\theta)\ =\ \lim_{y\to\theta}\frac{\cG_s(x,y)}{{(1-|y|^2)}^s}\qquad \text{ for }x\in\R^N,\ \theta\in\partial B,
\]
which we use to characterize a large class of $s$-harmonic functions, see Proposition \ref{lem:sharm-intro} below.  Martin kernels were introduced in \cite{M41} for $s=1$ to provide an analogue of Poisson kernels in nonsmooth domains and in \cite{B99} for $s\in(0,1)$ to give representation formulas for $s$-harmonic functions which are singular at the boundary of the domain (a purely nonlocal phenomenon). Our construction is similar to the one presented in \cite{nicola} and we generalize it to $s>1$. See also Lemma \ref{Martin:ball} for a simplified expression of $M_s$.

With these elements we show first that $u$ given as in \eqref{Gsu} is a distributional solution and the order of derivation $(-\Delta)^{m}(-\Delta)^{\sigma}u$ appears as a consequence of integration by parts, see Lemma \ref{ibyp}. This order, however, may be partially interchanged depending on the interior and boundary regularity of $u$, see Proposition \ref{interchange-der}. For example, if $f\in C^{\alpha}(B)$, $m$ is even, and $u$ is as in \eqref{Gsu}, then $(-\Delta)^{m}(-\Delta)^{\sigma}u=(-\Delta)^{\frac{m}{2}}(-\Delta)^{\sigma} (-\Delta)^{\frac{m}{2}}u$ pointwise in $B$, which is consistent with the variational framework described above.

Note that the regularity of solutions \textemdash in particular, integrability, which is used to show uniqueness \textemdash is more involved for higher-order fractional powers of the Laplacian. For instance, consider the function $u(x)=(1-|x|^2)_+^s$ for $s>0$, which is a pointwise solution of $(-\Delta)^s u = C$  in $B$ for some constant $C>0$ (see Corollary \ref{sol:ball} below). Clearly $u$ belongs to $H^{2s}(B)$ if $s$ is an integer, since in this case $u$ is a polynomial. For general $s$, however, $u$ may have derivatives which blow-up at the boundary, for example terms involving $(1-|x|^2)_+^{s-2}$ are \emph{not} in $L^2(B)$ if $s\in(1,\frac{3}{2})$. To circumvent this difficulty and show that $u\in \cH_0^s(B)$, we use standard interpolation theory as in \cite{T78,Lototsky}. 

In the recent work \cite{DG2016} the authors show independently the validity of Boggio's formula for all $s>0$ considering only smooth functions with compact support as right-hand sides.  The proofs in \cite{DG2016} are very different from ours and rely on covariance under M\"{o}bius transformations and computations using Hypergeometric functions, see also \cite[Remark 1]{DKK15}. 

Our approach also provides the following new insights on higher-order $s$-harmonic functions and on distributional solutions satisfying different boundary conditions.
\begin{prop}\label{lem:sharm-intro}
Let $s>0$ and $\mu$ be a finite Radon measure on $\partial B$. The function
\[
u(x)=\int_{\partial B}M_s(x,z)\;d\mu(z)\qquad \text{ for } x\in\R^N
\]
is $s$-harmonic in $B$ in the sense of distributions.
\end{prop}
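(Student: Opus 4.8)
The plan is to reduce Proposition \ref{lem:sharm-intro} to the pointwise $s$-harmonicity of the Martin kernel $M_s(\cdot,z)$ for each fixed $z\in\partial B$, together with a passage to the limit in the Green function equation $(-\Delta)^s\cG_s(\cdot,y)=\delta_y$. First I would fix $z\in\partial B$ and a sequence $y_j\in B$ with $y_j\to z$, and consider the normalized Green functions $g_j(x):=\cG_s(x,y_j)/(1-|y_j|^2)^s$. By the very definition of the Martin kernel, $g_j(x)\to M_s(x,z)$ pointwise for $x\in B\setminus\{z\}$, and the simplified expression for $M_s$ from Lemma \ref{Martin:ball} (which I may assume) shows that $M_s(\cdot,z)$ is smooth in $B$. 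Each $g_j$ is a distributional solution of $(-\Delta)^s g_j=(1-|y_j|^2)^{-s}\delta_{y_j}$ in $B$; since $y_j\to z\in\partial B$, for any fixed test function $\varphi\in C^\infty_c(B)$ the support of $\varphi$ is eventually disjoint from $\{y_j\}$, so the right-hand side pairs to zero, i.e.\ $\int_{\R^N}g_j\,(-\Delta)^s\varphi\,dx=0$ for $j$ large. Passing to the limit (justified below) yields $\int_{\R^N}M_s(x,z)\,(-\Delta)^s\varphi(x)\,dx=0$, which is exactly distributional $s$-harmonicity of $M_s(\cdot,z)$ in $B$. Finally, integrating against $d\mu(z)$ over $\partial B$ and using Fubini (the kernel $M_s$ is jointly measurable and, on a fixed compact $K\subset B$, bounded uniformly in $z\in\partial B$ by the closed form in Lemma \ref{Martin:ball}) transfers the identity to $u(x)=\int_{\partial B}M_s(x,z)\,d\mu(z)$, giving the claim.

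The technical core is the passage to the limit $\int g_j\,(-\Delta)^s\varphi\to\int M_s(\cdot,z)\,(-\Delta)^s\varphi$. Here $\varphi\in C^\infty_c(B)$ is fixed, so $(-\Delta)^s\varphi$ is a fixed function in $\R^N$; the subtlety is that $g_j$ and $M_s(\cdot,z)$ are only integrated against it over all of $\R^N$, and $g_j$ does not have compact support. I would split $\R^N=B_R\cup(\R^N\setminus B_R)$ for a large ball $B_R\supset\overline{B}\cup\supp(-\Delta)^s\varphi$. On $B_R$, the explicit Boggio formula gives a uniform (in $j$, since $1-|y_j|^2\to 0$ is controlled by the boundary behaviour $\rho(x,y_j)\lesssim (1-|y_j|^2)$) and locally integrable bound $|g_j(x)|\le C\,|x-z|^{2s-N}$ near $z$ and $|g_j(x)|\le C$ away from $z$, so dominated convergence applies there. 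On $\R^N\setminus B_R$ one has the decay of $\cG_s$ (again from \eqref{greenball-intro}: for $|x|$ large and $|y|\le 1$, $\rho(x,y)\to 0$ so the integral in \eqref{greenball-intro} behaves like $\rho^s$, giving $|g_j(x)|\le C|x|^{-N-2s}$ uniformly in $j$ for $|x|\ge R$), and $(-\Delta)^s\varphi$ likewise decays like $|x|^{-N-2s}$; both contributions are integrable and dominated, so the tail also converges. Collecting the two pieces gives the desired convergence.

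The step I expect to be the main obstacle is making the uniform-in-$j$ domination genuinely rigorous near the boundary singularity, i.e.\ controlling $g_j(x)=\cG_s(x,y_j)/(1-|y_j|^2)^s$ uniformly as $y_j\to z$. One must check from \eqref{greenball-intro}–\eqref{greenconst-intro} that the factor $(1-|y_j|^2)^s$ in the denominator is exactly absorbed by the vanishing of $\int_0^{\rho(x,y_j)}v^{s-1}(v+1)^{-N/2}\,dv$, which near the boundary is comparable to $\rho(x,y_j)^s=\big((1-|x|^2)(1-|y_j|^2)\big)^s|x-y_j|^{-2s}$ — so $g_j(x)\sim c\,(1-|x|^2)^s|x-z|^{2s-N-2s}=c\,(1-|x|^2)^s|x-z|^{-N}$ near $z$, which is locally integrable in $x$ (the $(1-|x|^2)^s$ vanishes as $x\to z$). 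I would isolate this as a short lemma-style estimate: there is $C=C(N,s)$ such that $|g_j(x)|\le C\min\{1,\,|x-z|^{2s-N}\}$ for all $x\in\overline{B}$ and all $j$, with the analogous exterior decay, which then feeds the dominated-convergence argument above. Everything else — pointwise convergence, Fubini, and the conclusion — is then routine.
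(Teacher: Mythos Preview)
Your overall strategy coincides with the paper's proof of Lemma \ref{lem:sharm}: use the Green identity $\int_B\cG_s(x,y)\,(-\Delta)^s\psi(x)\,dx=\psi(y)$ from Theorem \ref{green:thm}, divide by $(1-|y|^2)^s$, and send $y\to z\in\partial B$ so that the right-hand side vanishes (since $\psi\in C^\infty_c(B)$). The paper performs exactly this limit and then integrates in $d\mu$, just as you do; it simply leaves the swap of limit and $\int_B$ implicit.

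You correctly single out the limit--integral exchange as the crux, but the pointwise domination you propose is false. The bound $|g_j(x)|\le C\min\{1,|x-z|^{2s-N}\}$ ignores the diagonal singularity of $\cG_s(\cdot,y_j)$ at $x=y_j$: for $2s<N$ one has $\cG_s(x,y_j)\simeq|x-y_j|^{2s-N}$ as $x\to y_j$, hence $g_j(x)\simeq|x-y_j|^{2s-N}/d(y_j)^s$, which blows up and is not majorized by any fixed function of $|x-z|$ near $x=y_j$ (take $x=y_j$ to see $g_j(y_j)=+\infty$, while your bound is finite there). The analogous failure occurs for $2s\ge N$. In fact no single $L^1$ dominator exists, so a straight dominated-convergence argument cannot close.

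The clean repair is Scheff\'e's lemma. From Corollary \ref{sol:ball} and the symmetry of $\cG_s$ one has $\int_B\cG_s(x,y_j)\,dx=\gamma_{N,s}(1-|y_j|^2)^s$, so $\int_B g_j\,dx=\gamma_{N,s}$ for every $j$. On the other hand, using Lemma \ref{Martin:ball} and the Poisson-kernel normalization $\int_{\partial B}(1-|x|^2)|x-\theta|^{-N}\,d\theta=|\partial B|$, a short computation (average over $z\in\partial B$) gives $\int_B M_s(x,z)\,dx=\gamma_{N,s}$ as well. Since $g_j\ge 0$, $g_j\to M_s(\cdot,z)$ a.e.\ in $B$, and $\int_B g_j\to\int_B M_s(\cdot,z)$, Scheff\'e yields $g_j\to M_s(\cdot,z)$ in $L^1(B)$; as $(-\Delta)^s\psi\in C(\overline B)$ is bounded, the desired convergence $\int_B g_j\,(-\Delta)^s\psi\to\int_B M_s(\cdot,z)\,(-\Delta)^s\psi$ follows, and the rest of your argument goes through unchanged.
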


Proposition \ref{lem:sharm-intro} was known only for $s\in(0,1)$, see \cite{nicola,B99}.  See also Remark \ref{rmk:sharm} for more on $s$-harmonic functions. The proof of Proposition \ref{lem:sharm-intro} follows directly from Theorem \ref{green:thm} and Lemma \ref{lem:sharm}. 

\begin{cor}\label{green:cor2}
Let $s>1$, $j\in (0,s)\cap\N$, and $\mu$ be a finite Radon measure on $B$. Then the function $u_j:\R^N\to\R,$ given by $u_j(x)=\int_B \cG_{s-j}(x,y)\int_B \cG_j(y,z)\ d\mu(z) dy$ is a distributional solution of $(-\Delta)^s u_j=\mu$. In particular, if $d\mu(z)=f(z)\ dz$ for some $f\in C^{\alpha}(B)$ then $u_j\in C^{s-j}_0(B)$ is a distributional solution of $(-\Delta)^s u_j=f$.
\end{cor}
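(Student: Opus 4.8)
The plan is to build $u_j$ by composing the two Green operators $\cG_{s-j}$ and $\cG_j$, each of which is already understood via Theorem~\ref{green:thm}. First I would set $w(y):=\int_B \cG_j(y,z)\,d\mu(z)$, which by the case of Theorem~\ref{green:thm} applied to the order $j\in(0,s)\cap\N$ is a distributional solution of $(-\Delta)^j w=\mu$ in $B$ with $w=0$ on $\R^N\setminus B$; when $d\mu(z)=f(z)\,dz$ with $f\in C^\alpha(B)$ one gets in addition $w\in C^j_0(B)$ (with the convention $C^j_0$ when $j$ is an integer understood as $j-1$ derivatives vanishing, together with the boundary decay $|w|\lesssim \dist(\cdot,\partial B)^j$ from \eqref{decay}). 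Then I would set $u_j(y):=\int_B \cG_{s-j}(x,y)\,w(y)\,dy$, and apply Theorem~\ref{green:thm} again, now with order $s-j>0$ and right-hand side $w$, to conclude $(-\Delta)^{s-j}u_j=w$ distributionally in $B$ with $u_j=0$ on $\R^N\setminus B$. Composing the two identities in the sense of distributions gives $(-\Delta)^s u_j=(-\Delta)^{s-j}(-\Delta)^j u_j = (-\Delta)^{s-j}w=\mu$, since $(-\Delta)^s=(-\Delta)^{s-j}(-\Delta)^j$ as operators (both equal $\cF^{-1}|\xi|^{2s}\cF$, and the factorization survives in the distributional pairing because the iterated Green functions produce enough regularity away from the singularity).

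The one genuinely delicate point is the composition step: to write $\langle (-\Delta)^s u_j,\varphi\rangle = \langle (-\Delta)^j u_j, (-\Delta)^{s-j}\varphi\rangle$ and then recognize the inner object as $\langle w,(-\Delta)^{s-j}\varphi\rangle=\langle(-\Delta)^{s-j}u_j',\ldots\rangle$ requires that the two distributional equations can be chained, i.e. that $w$ is an admissible test-datum for the $(-\Delta)^{s-j}$ equation and that the pairing of a distribution with $(-\Delta)^{s-j}\varphi$, $\varphi\in C_c^\infty(B)$, makes sense. This is where the boundary decay estimate \eqref{decay} from Theorem~\ref{green:thm} is essential: it guarantees $w$ has the right integrability near $\partial B$ so that $\int_B \cG_{s-j}(x,y)w(y)\,dy$ converges absolutely and defines an $L^1$ (indeed, when $\mu$ is absolutely continuous, $C^{s-j}_0(B)$) function, and that Fubini can be used to interchange the order of integration when testing. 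I would therefore carry out the argument in the order: (i) define $w$, record its properties from Theorem~\ref{green:thm} and \eqref{decay}; (ii) define $u_j$, record $u_j\in C^{s-j}_0(B)$ (in the absolutely continuous case) or $u_j\in L^1(B)$ (in the measure case) and $(-\Delta)^{s-j}u_j=w$ distributionally; (iii) test against $\varphi\in C_c^\infty(B)$, use $(-\Delta)^s\varphi=(-\Delta)^{s-j}(-\Delta)^j\varphi$ and the two distributional identities, swapping integrals via Fubini justified by the decay bounds, to land on $\langle \mu,\varphi\rangle$.

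The only subtlety beyond bookkeeping is the edge case where $j$ or $s-j$ is an integer and $\cG_j$ is the classical polyharmonic Boggio kernel; here Theorem~\ref{green:thm} still applies (it is stated for $\sigma\in(0,1]$, hence covers integer orders), so no separate treatment is needed, but one should be careful that ``$C^j_0(B)$'' is interpreted in the polyharmonic sense of Dirichlet boundary conditions, which is exactly what Theorem~\ref{green:thm} delivers. I expect the main obstacle to be purely the absolute convergence / Fubini justification in step (iii) for the pure-measure case, since there $w$ need not be bounded; but the bound $\cG_j(y,z)\lesssim |y-z|^{2j-N}$ for $N>2j$ (and the logarithmic/bounded behaviour otherwise) together with $\cG_{s-j}(x,y)\lesssim |x-y|^{2(s-j)-N}$ makes the double integral a standard Riesz-type composition, which is finite because $2s<N$ is not needed — the boundary cutoff makes $B$ bounded and the kernels locally integrable in each variable. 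Once that is in place, everything else is a direct citation of Theorem~\ref{green:thm}.
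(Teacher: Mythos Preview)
Your approach is essentially the paper's, but the paper streamlines it by never introducing the intermediate function $w$ or asserting $(-\Delta)^{s-j}u_j=w$ for possibly non-H\"older $w$. Instead it puts all derivatives on the test function from the start: since $j\in\N$, the function $(-\Delta)^j\phi$ is again in $C^\infty_c(B)$, so the Green identity from Theorem~\ref{green:thm} (namely $\int_B\cG_{s-j}(x,y)(-\Delta)^{s-j}\psi(y)\,dy=\psi(x)$ for $\psi\in C^\infty_c(B)$), applied with $\psi=(-\Delta)^j\phi$, yields $\int_B\cG_{s-j}(x,y)(-\Delta)^{s}\phi(y)\,dy=(-\Delta)^j\phi(x)$ in one stroke; then Fubini and the same identity for $\cG_j$ finish. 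This bypasses your step~(ii), which is a real simplification because Theorem~\ref{green:thm} as stated requires a $C^\alpha$ right-hand side and hence does not directly give $(-\Delta)^{s-j}u_j=w$ in the measure case. Your plan does recover in step~(iii) via Fubini---which is exactly the paper's computation---so nothing is actually missing, but steps (i)--(ii) then become redundant scaffolding. Note also a slip in your composition: you wrote $(-\Delta)^{s-j}w=\mu$, but it is $(-\Delta)^j w=\mu$; the correct chain is $(-\Delta)^s u_j=(-\Delta)^j\bigl[(-\Delta)^{s-j}u_j\bigr]=(-\Delta)^j w=\mu$.
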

Note that the solutions given by Corollary \ref{green:cor2} are not the one given by Theorem~\ref{green:thm}, in particular they correspond to different boundary conditions and do not satisfy \eqref{decay}. With these solutions we can construct the following $s$-harmonic functions.
\begin{cor}\label{green:cor}
For $s>1$, $x,y\in B$, $x\neq y$, let $v(x,y):=\cG_s(x,y)-\int_B\cG_1(x,z)\cG_{s-1}(z,y)\ dz$. Then, for fixed $y\in B$ (resp. $x\in B$), $v$ is $s$-harmonic with respect to $x$ (resp. $y$) in $B$ in the sense of distributions.
\end{cor}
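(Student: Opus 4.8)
The plan is to write $v$, separately in each of its two variables, as a difference of two distributional solutions of the same equation --- $(-\Delta)^s w=\delta_y$ for fixed $y$, and $(-\Delta)^s w=\delta_x$ for fixed $x$ --- so that $v$ is $s$-harmonic in $B$ by the very definition of a distributional solution. I will use throughout that Boggio's formula is symmetric, $\cG_t(a,b)=\cG_t(b,a)$ for all $t>0$, which is immediate from \eqref{greenball-intro}. By Theorem \ref{green:thm}, $\cG_s(\cdot,y)$ is a distributional solution of $(-\Delta)^s w=\delta_y$ in $B$, hence by symmetry $\cG_s(x,\cdot)$ is a distributional solution of $(-\Delta)^s w=\delta_x$ in $B$. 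Writing
\[
 H(x,y):=\int_B \cG_1(x,z)\,\cG_{s-1}(z,y)\,dz,
\]
it remains to show that $H(\cdot,y)$ is a distributional solution of $(-\Delta)^s w=\delta_y$ in $B$ and that $H(x,\cdot)$ is a distributional solution of $(-\Delta)^s w=\delta_x$ in $B$.

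For the statement in the second variable (fixed $x$), the symmetry of $\cG_1$ and $\cG_{s-1}$ gives $H(x,y)=\int_B\cG_{s-1}(y,z)\,\cG_1(z,x)\,dz$, which is precisely the function furnished by Corollary \ref{green:cor2} with $j=1$ and $\mu=\delta_x$, evaluated at $y$. Therefore $H(x,\cdot)$ is a distributional solution of $(-\Delta)^s w=\delta_x$ in $B$, and subtracting from $\cG_s(x,\cdot)$ we obtain $(-\Delta)^s_y v(x,\cdot)=\delta_x-\delta_x=0$ in $B$; that is, $v(x,\cdot)$ is $s$-harmonic in $B$.

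For the statement in the first variable (fixed $y$), I would peel off one Laplacian at a time via the factorisation $(-\Delta)^s=(-\Delta)^{s-1}(-\Delta)$. Since $\cG_1(\cdot,z)$ is the classical Green function of $-\Delta$ in $B$ and $\cG_{s-1}(\cdot,y)\in L^1(B)$, Fubini's theorem against $\phi\in C^\infty_c(B)$ yields $-\Delta_x H(\cdot,y)=\cG_{s-1}(\cdot,y)$ in $B$ in the sense of distributions. Applying $(-\Delta)^{s-1}_x$ to both sides and using that $\cG_{s-1}(\cdot,y)$ is a distributional solution of $(-\Delta)^{s-1}w=\delta_y$ in $B$ --- which is Theorem \ref{green:thm} for the exponent $s-1$ when $s>2$, and a classical fact when $1<s\le2$ --- one gets $(-\Delta)^s_x H(\cdot,y)=(-\Delta)^{s-1}_x\cG_{s-1}(\cdot,y)=\delta_y$ in $B$. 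Subtracting from $\cG_s(\cdot,y)$ gives $(-\Delta)^s_x v(\cdot,y)=0$ in $B$, so $v(\cdot,y)$ is $s$-harmonic in $B$.

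The delicate point is this last step: since $(-\Delta)^{s-1}$ is nonlocal, composing it with $-\Delta$ to recover $(-\Delta)^s$, and carrying the identity $-\Delta_x H(\cdot,y)=\cG_{s-1}(\cdot,y)$ --- a priori only known when tested against $C^\infty_c(B)$ --- through it must be justified by means of the interior regularity and the boundary decay of $\cG_{s-1}$ and of $H(\cdot,y)$ established in (the proof of) Theorem \ref{green:thm}. I expect this to be carried out by mollifying the Green functions and passing to the limit, or by realising $H(\cdot,y)$ as in Corollary \ref{green:cor2} as a limit of solutions with smooth compactly supported data and then letting these data approximate $\delta_y$.
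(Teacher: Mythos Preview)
Your argument for harmonicity in $y$ (with $x$ fixed) is correct: after the symmetry flip you write $H(x,y)=\int_B\cG_{s-1}(y,z)\,\cG_1(z,x)\,dz$, which is exactly the function $u_1$ of Corollary \ref{green:cor2} with $j=1$ and $\mu=\delta_x$, since that corollary requires the \emph{integer} Green function $\cG_j$ to sit on the inside. The paper reaches the same conclusion by a different route: it uses the recurrence identity \eqref{eq:composedgreen} (coming from Lemma \ref{Lap:green}) to write
\[
v(x,y)=-4k_{N,s}(s-1)\int_B\cG_1(x,z)\,P_{s-1}(z,y)\,dz,
\]
then expands $P_{s-1}$ through the Martin-kernel identity of Lemma \ref{Martin:rep} as a $\partial B$--integral against $M_s(y,\theta)$, and concludes $s$-harmonicity in $y$ from Lemma \ref{lem:sharm}. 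Your route is shorter for this half.

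For harmonicity in $x$ (with $y$ fixed), the difficulty you anticipate is real and is not cured by mollification. The function $H(\cdot,y)$ vanishes on $\partial B$ but only to first order: its outward normal derivative equals $-\int_B K(z,\cdot)\,\cG_{s-1}(z,y)\,dz\neq 0$, with $K$ the classical Poisson kernel. Hence Lemma \ref{ibyp} does not apply, and when you write $(-\Delta)^s\phi=(-\Delta)\big[(-\Delta)^{s-1}\phi\big]$ and integrate by parts over $B$ the boundary term $-\int_{\partial B}(-\Delta)^{s-1}\phi\,\partial_\nu H\,dS$ survives, because $(-\Delta)^{s-1}\phi$ does not vanish on $\partial B$. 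In particular $H(\cdot,y)$ is \emph{not} a distributional solution of $(-\Delta)^s w=\delta_y$ in $B$, so the subtraction argument breaks down as written. The paper treats this half by appealing to Corollary \ref{green:cor2} and Theorem \ref{green:thm}; observe, however, that Corollary \ref{green:cor2} with $j=1$ and $\mu=\delta_y$ literally produces $\int_B\cG_{s-1}(\cdot,z)\,\cG_1(z,y)\,dz$ rather than $H(\cdot,y)$ (the two compositions differ unless $s-1\in\N$), so the paper's citation is somewhat elliptic here too. What you are missing, and what underlies the paper's argument, is the explicit formula for $v$ displayed above: it replaces the question ``is $H(\cdot,y)$ a solution of $(-\Delta)^s w=\delta_y$?'' (false) by ``is $\int_B\cG_1(\cdot,z)P_{s-1}(z,y)\,dz$ $s$-harmonic?'', which can then be attacked with the $(s-1)$-harmonicity of $P_{s-1}(\cdot,y)$ (Corollary \ref{Q:harmonic}) and the Martin-kernel machinery, rather than by a bare operator composition.
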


Finally, our method also provides information on the sign of some $s$-harmonic functions.
\begin{cor}\label{harm:cor}
Fix $s\in(k,k+1)$ for some $k\in\N$ odd, $B\subset \R^N$ the unitary ball, and $g\in C^\infty_c(\R^N\setminus\overline{B})$ with $g\geq 0$. Then, there exists a unique weak solution $u\in H^s(\R^N)$ to 
$(-\Delta)^s u=0$ in $B$ with $u=g$ in $\R^N\setminus B$. Moreover, $u\leq 0$ in $B$.
\end{cor}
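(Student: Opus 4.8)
The plan is to obtain the statement by assembling Theorem~\ref{green:thm} and Corollary~\ref{smoothgoutside}: the exterior datum $g$ is converted by $(-\Delta)^s$ into an \emph{interior} source on $B$ of a definite sign, and this source is then mapped to a solution of the same sign by the nonnegative Boggio kernel $\cG_s$. We may assume $g\not\equiv 0$ (otherwise $u\equiv 0$ works and uniqueness is clear). Write $s=m+\sigma$ with $m\in\N$ and $\sigma\in(0,1)$; since $s\in(k,k+1)$ with $k$ odd, necessarily $m=k$ is odd, hence $(-1)^{m+1}=1$. Put $h:=-(-\Delta)^s g$. Because $g\in C^\infty_c(\R^N)$, $(-\Delta)^s g=\cF^{-1}(|\xi|^{2s}\cF g)$ is smooth on $\R^N$ (its Fourier transform is rapidly decreasing), so $h\in C^\infty(\overline B)\subset C^\alpha(B)\cap L^\infty(B)$ for every $\alpha\in(0,1)$. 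Choosing a smooth bounded open neighborhood $\Omega'$ of $\supp g$ with $\overline{\Omega'}\cap\overline B=\emptyset$ (applying Corollary~\ref{smoothgoutside} to each component of $\Omega'$ if needed), Corollary~\ref{smoothgoutside} gives $(-\Delta)^s g=(-1)^{m+1}(-\Delta)^s g>0$ on $\R^N\setminus\overline{\Omega'}\supset B$, that is, $h<0$ in $B$.

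Next set $w(x):=\int_B\cG_s(x,y)\,h(y)\,dy$ and $u:=g+w$. By Theorem~\ref{green:thm} applied with $f=h$, the function $w$ belongs to $\cH^s_0(B)\cap C^s_0(B)\cap C^{2s+\alpha}_{loc}(B)$ and is the unique weak solution of \eqref{wsol:def1} with $\Omega=B$ and right-hand side $h$. From \eqref{greenball-intro} one reads off $\cG_s>0$ on $B\times B$ off the diagonal, since $k_{N,s}>0$, $\rho(x,y)>0$ for $x,y\in B$, and the integrand $v^{s-1}(v+1)^{-N/2}$ is positive on $(0,\rho(x,y))$ (here $s>1$). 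As $h<0$ in $B$, this yields $w<0$ in $B$. Consequently $u\in H^s(\R^N)$; moreover $w\equiv 0$ on $\R^N\setminus B$ forces $u=g$ there, and $g\equiv 0$ on $\overline B$ forces $u=w<0$ in $B$, in particular $u\le 0$ in $B$.

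It remains to verify that $u$ is a weak solution of $(-\Delta)^s u=0$ in $B$ with exterior datum $g$, and that it is unique. For $\varphi\in\cH^s_0(B)$, Plancherel's identity together with the factorization $|\xi|^{2s}=|\xi|^{2m}|\xi|^{2\sigma}$ and the definition \eqref{bilin:def} of $\cE_s$ — equivalently, the integration-by-parts formula of Lemma~\ref{ibyp} in the disjoint-support case $\supp g\cap\supp\varphi=\emptyset$ — give $\cE_s(g,\varphi)=\int_{\R^N}(-\Delta)^s g\,\varphi\,dx=\int_B(-\Delta)^s g\,\varphi\,dx=-\int_B h\,\varphi\,dx$, whereas the weak formulation satisfied by $w$ gives $\cE_s(w,\varphi)=\int_B h\,\varphi\,dx$. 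Adding these, $\cE_s(u,\varphi)=0$, so $u$ is a weak solution. If $u_1,u_2$ are two such solutions, then $u_1-u_2\in\cH^s_0(B)$ and $\cE_s(u_1-u_2,\varphi)=0$ for every $\varphi\in\cH^s_0(B)$, so $u_1-u_2$ is a weak solution of \eqref{wsol:def1} on $B$ with $f\equiv 0$; by the uniqueness part of Theorem~\ref{green:thm} it equals $\int_B\cG_s(\cdot,y)\cdot 0\,dy=0$, which proves uniqueness.

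The only genuinely delicate ingredients are the two sign computations — that the parity ``$k$ odd'' makes $(-1)^{m+1}=1$, so that $h=-(-\Delta)^s g<0$ in $B$ by Corollary~\ref{smoothgoutside}, and that Boggio's kernel $\cG_s$ is nonnegative by Theorem~\ref{green:thm} — and both are already established. What remains, namely the bookkeeping of the exterior datum, the $C^\alpha$-regularity of $h$, and the integration by parts, is routine.
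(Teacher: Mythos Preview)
Your proof is correct and follows essentially the same route as the paper's: both write $u=g+w$ with $w\in\cH^s_0(B)$, identify the right-hand side for $w$ as a nonpositive function coming from the interaction term $\cE_s(g,\varphi)$ (the paper via Lemma~\ref{boundedevaluation2}, you via its consequence Corollary~\ref{smoothgoutside}), and then invoke the positivity of Boggio's kernel from Theorem~\ref{green:thm} to conclude $w\le 0$. The only organizational difference is that the paper first obtains existence and uniqueness by minimizing $\cE_s(v,v)$ over $\{v:v-g\in\cH^s_0(B)\}$ and then reads off the sign, whereas you construct $w$ explicitly through the Green function and verify the weak formulation afterwards; neither order adds or loses anything substantial.
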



As a second example where maximum principles are satisfied, we discuss in Theorem \ref{ppprn} below the case of the whole space. Moreover, we show the existence of distributional solutions to $(-\Delta)^su=f$ in $\R^N$ for \emph{all} $s>0$ in Corollary \ref{generalcase}. Note that the fundamental solution is \emph{not} given by the Riesz kernel if $s-\frac{N}{2}\in \N_0$, see Definition \ref{FNs:def}.

\medskip

The organization of the paper is the following. The notation used throughout the paper is introduced in Section \ref{Notation} and the development of the variational framework for higher-order fractional operators can be found in Section \ref{s12}. The proofs of Theorem \ref{main:thm} and Corollaries \ref{smoothgoutside} and \ref{main:cor} are contained in Section \ref{ce:sec}. In Section \ref{sec:greenfund} we discuss the distributional setup of the problem and provide a representation formula for solutions in the whole space for all $s>0$. The proofs of Theorem \ref{green:thm} and Corollaries \ref{green:cor2}, \ref{green:cor}, and \ref{harm:cor} are written in Section \ref{ballcase} together with some remarks on $s$-harmonic functions. Finally, in the Appendix, we prove a differential recurrence equation involving Boggio's formula and we present results regarding the interchange of derivatives.

\subsection*{Acknowledgements}

We are very thankful to Denis Bonheure, Roberta Musina, Hans Triebel, and Tobias Weth for valuable discussions.

\section{Notation}\label{Notation}

Let $N\in \N$ and $U,D\subset \R^N$ be nonempty measurable sets. We denote by $1_U: \R^N \to \R$ the characteristic function, $|U|$ the Lebesgue measure, and $\diam(U)$ the diameter of $U$. The notation $D \subset \subset U$ means that $\overline D$ is compact and contained in the interior of $U$. The distance between $D$ and $U$ is given by $\dist(D,U):= \inf\{|x-y|\::\: x \in D,\, y \in U\}$ and if $D= \{x\}$ we simply write $\dist(x,U)$. Note that this notation does {\em not} stand for the usual Hausdorff distance. For $x \in \R^N$ and $r>0$ let $B_r(x)$ denote the open ball centered at $x$ with radius $r$, moreover we fix $B:=B_1(0)$, $\omega_N=|B|$, and $d(x)=\dist(x,\R^N\setminus B)$ for $x\in \R^N$.

If $u$ is in a suitable function space, we use $\cF u$ or $\widehat u$ to denote the Fourier transform of $u$ and $\cF^{-1}(u)$ or $u^\vee$ to denote its inverse. 

For any $s\in\R$, we define $H^s(\R^N):=\left\{u\in L^2(\R^N)\;:\; (1+|\xi|^{2})^{\frac{s}{2}}\ \widehat u\in L^2(\R^N)\right\};$ moreover,
if $U$ is open, we define $\cH^{s}_0(U)$ as in \eqref{Hs0:def} and, if $U$ is smooth, we put $H^s(U):=\{u1_{U}\;:\; u\in H^s(\R^N)\}$.

We use $\cS$ to denote the space of Schwartz functions in $\R^N$ and $\cS'$ its dual (the space of tempered distributions) and denote $\pair{\cdot}{\cdot}:\cS'\times \cS\to \R$ the dual pairing of $\cS'$ and $\cS$. For the definition of these spaces and basic properties we refer to \cite[Chapter 2.3]{G08}. Recall that $\pair{\widehat u}{f}=\pair{u}{\widehat f}$ for all $f\in\cS$.  As usual, for suitable $u:\R^N\to\R$ we identify $u$ with its associated distribution $T_u:\cS\to\R^N$ given by $\pair{T_u}{f}=\int_{\R^N} u(x)f(x)\ dx$ for all $f\in \cS$.

For $m\in \N_0$, $\sigma\in[0,1)$, $s=m+\sigma$, and $U$ open, we write $C^s(U):=C^{m,\sigma}(U)$ (resp. $C^s(\overline{U})$) to denote the space of $m$-times continuously differentiable functions in $U$ (resp. $\overline{U}$) and, if $\sigma>0$, whose derivatives of order $m$ are $\sigma$-H\"older continuous in $U$. Moreover, for $s\in[0,\infty]$, $C^s_c(U):=\{u\in C^s(\R^N): \supp \ u\subset\subset U\}$ and $C^s_0(U):=\{u\in C^s(\R^N): u\equiv 0 \text{ on $\R^N\setminus U$}\}$, where $\supp\ u:=\overline{\{ x\in U\;:\; u(x)\neq 0\}}$ is the support of $u$.

Recall \eqref{bilin:def}. If $m\in \N$ is odd we also use the following vector notation
\[
\cE_{\sigma}(\nabla (-\Delta)^{\frac{m-1}{2}}u ,\nabla (-\Delta)^{\frac{m-1}{2}}u ):= \sum_{k=1}^{N}\cE_{\sigma}(\partial_k(-\Delta)^{\frac{m-1}{2}}u ,\partial_k (-\Delta)^{\frac{m-1}{2}}u )=\cE_{s}(u,u).
\]

Let $u: U \to \R$ be a function. We use $u^+:=u_+:= \max\{u,0\}$ and $u^-:=-\min\{u,0\}$ to denote the positive and negative part of $u$ respectively.

Finally, $\Gamma$ denotes the standard \emph{Gamma function} and if $f: U\times D\to \R$ we write $(-\Delta_x)^s f(x,y)$ to denote derivatives with respect to $x$, whenever they exist in some appropriate sense.

\section{Variational framework} \label{s12}

Let $\Omega\subset \R^N$ be an open set, and fix $m\in \N_0:=\{0,1,2,\ldots\}$, $\sigma\in(0,1)$, and $s=m+\sigma$. Recall the space $\cH^s_0(\Omega)$ as defined in \eqref{Hs0:def} equipped with the bilinear form $\cE_{s}(\cdot,\cdot)$ defined in \eqref{bilin:def}.  We begin by showing the equivalence between the definition of weak solution (see \eqref{wsol:def}) and the definition of solution via the Fourier transform $\cF$.

\begin{prop}\label{justification}
Let $f\in L^2(\Omega)$. The function $u\in H^{s}(\R^N)$ is a weak supersolution of \eqref{wsol:def1} if and only if
\begin{align*}
\int_{\R^N} |\xi|^{2s}\cF u(\xi)\cF \varphi(\xi)\ d\xi\geq \int_{\R^N} f(x)\varphi(x)\ dx
\end{align*}
for all nonnegative $\varphi\in \cH_0^{s}(\Omega)$ with compact support in $\mathbb R^N$. Moreover, for $u\in H^{2s}(\R^N)$ the operator $(-\Delta)^{s}u:=\cF^{-1}(|\cdot|^{2s} \cF u)$ is well-defined in $L^2(\R^N)$ and we have
\[
\cE_{s}(u,\phi)=\int_{\R^N}(-\Delta)^{s}u(x)\varphi(x)\ dx\quad\text{ for all $\varphi\in H^{s}(\R^N)$.}
\]
\end{prop}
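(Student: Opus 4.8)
The plan is to reduce everything to the Fourier-side identity $\cE_s(u,v)=\int_{\R^N}|\xi|^{2s}\cF u(\xi)\,\overline{\cF v(\xi)}\,d\xi$ (with real-valued functions, so conjugation is harmless), valid for $u,v\in H^s(\R^N)$, and then read off the two claims. The first claim (equivalence of the two notions of weak supersolution) is then immediate: the two testing inequalities coincide term by term once we know $\cE_s(u,\varphi)=\int |\xi|^{2s}\cF u\,\cF\varphi\,d\xi$, and the right-hand sides $\int_\Omega f\varphi\,dx=\int_{\R^N}f\varphi\,dx$ agree since $\varphi\equiv 0$ off $\Omega$. The second claim is a standard Plancherel computation: for $u\in H^{2s}(\R^N)$ one has $|\cdot|^{2s}\cF u\in L^2(\R^N)$, so $(-\Delta)^s u:=\cF^{-1}(|\cdot|^{2s}\cF u)\in L^2(\R^N)$ is well-defined, and for $\varphi\in H^s(\R^N)$ (which is enough to make $|\xi|^s\cF\varphi\in L^2$) Parseval gives $\int_{\R^N}(-\Delta)^s u\,\varphi\,dx=\int_{\R^N}|\xi|^{2s}\cF u\,\cF\varphi\,d\xi=\cE_s(u,\varphi)$. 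So the whole proposition rests on establishing the Fourier representation of $\cE_s$.

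First I would recall the base case $\sigma\in(0,1)$, $m=0$: the identity
\[
\cE_\sigma(\varphi,\psi)=\frac{c_{N,\sigma}}{2}\int_{\R^N}\int_{\R^N}\frac{(\varphi(x)-\varphi(y))(\psi(x)-\psi(y))}{|x-y|^{N+2\sigma}}\,dx\,dy=\int_{\R^N}|\xi|^{2\sigma}\cF\varphi(\xi)\,\cF\psi(\xi)\,d\xi
\]
is classical (it is quoted in the introduction with references \cite{JW14,sven:thesis}); it follows from the Fourier computation of the Gagliardo seminorm, using that $c_{N,\sigma}\int_{\R^N}(1-\cos(\xi\cdot z))|z|^{-N-2\sigma}\,dz=|\xi|^{2\sigma}$. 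Then for general $s=m+\sigma$ I would plug $\varphi=u$-derivative into this. Concretely, when $m$ is even, apply the base case to $\Delta^{m/2}u,\Delta^{m/2}v$ and use $\cF(\Delta^{m/2}u)(\xi)=(-|\xi|^2)^{m/2}\cF u(\xi)=(-1)^{m/2}|\xi|^m\cF u(\xi)$, so that $|\xi|^{2\sigma}\cF(\Delta^{m/2}u)\,\cF(\Delta^{m/2}v)=|\xi|^{2\sigma}|\xi|^{2m}\cF u\,\cF v=|\xi|^{2s}\cF u\,\cF v$, giving $\cE_s(u,v)=\int|\xi|^{2s}\cF u\,\cF v\,d\xi$. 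When $m$ is odd, apply the base case to each $\partial_k\Delta^{(m-1)/2}u$ and sum: since $\cF(\partial_k\Delta^{(m-1)/2}u)(\xi)=(i\xi_k)(-1)^{(m-1)/2}|\xi|^{m-1}\cF u(\xi)$, the product $\cF(\partial_k\Delta^{(m-1)/2}u)\,\overline{\cF(\partial_k\Delta^{(m-1)/2}v)}$ contributes $\xi_k^2|\xi|^{2(m-1)}\cF u\,\overline{\cF v}$, and $\sum_{k=1}^N\xi_k^2=|\xi|^2$ gives again $|\xi|^{2m}\cF u\,\overline{\cF v}$; multiplying by $|\xi|^{2\sigma}$ and integrating yields $\cE_s(u,v)=\int|\xi|^{2s}\cF u\,\overline{\cF v}\,d\xi$. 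For real-valued $u,v$ the conjugation disappears.

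The main obstacle, and the only point requiring care, is the \emph{integrability bookkeeping}: one must check that the derivatives appearing in \eqref{bilin:def} actually lie in $H^\sigma(\R^N)$ so that $\cE_\sigma$ is finite and the base-case identity applies. For $u\in H^s(\R^N)=H^{m+\sigma}(\R^N)$ one has $(1+|\xi|^2)^{(m+\sigma)/2}\cF u\in L^2$; then $|\xi|^\ell\cF u\in L^2$ for all $0\le\ell\le m$, and in particular $(1+|\xi|^2)^{\sigma/2}|\xi|^m\cF u\in L^2$, i.e. $\Delta^{m/2}u$ (or $\partial_k\Delta^{(m-1)/2}u$) belongs to $H^\sigma(\R^N)$ — so $\cE_s(u,u)<\infty$ and the norm $\|\cdot\|_{\cH^s_0(\Omega)}$ is well-defined, consistent with \eqref{Hs0:def}. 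One also notes that the density of $\{\varphi\in\cH^s_0(\Omega):\supp\varphi\subset\subset\R^N\}$ in $\cH^s_0(\Omega)$ is not needed here, since the statement quantifies over exactly the compactly supported test functions in both formulations. With these verifications in place, the equivalence and the Plancherel identity follow as above.
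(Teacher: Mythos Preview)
Your proposal is correct and follows essentially the same approach as the paper: both arguments reduce the proposition to the Fourier identity $\cE_s(u,\varphi)=\int_{\R^N}|\xi|^{2s}\cF u\,\cF\varphi\,d\xi$, established by applying the known $\sigma\in(0,1)$ case to the appropriate $m$-th order derivatives of $u,\varphi$ and using $\cF(\partial^\alpha w)(\xi)=(i\xi)^\alpha\cF w(\xi)$. Your integrability bookkeeping (checking that $\Delta^{m/2}u$ or $\partial_k\Delta^{(m-1)/2}u$ lies in $H^\sigma(\R^N)$ when $u\in H^s(\R^N)$) is a useful addition that the paper leaves implicit.
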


\begin{proof}
Let $\varphi\in \cH_0^{s}(\Omega)$ be nonnegative and $u\in H^{s}(\R^N)$. 
If $m$ is even, then
\begin{align*}
&\int_{\R^N} |\xi|^{2s}\cF u(\xi)\cF \varphi(\xi)\ d\xi= \int_{\R^N} |\xi|^{s} \cF u(\xi)\cdot |\xi|^{s}\cF\varphi(\xi)\ d\xi\\
&= \int_{\R^N}(-\Delta)^{\frac{\sigma}{2}} \Delta^{\frac{m}{2}} u(x)\cdot (-\Delta)^{\frac{\sigma}{2}} \Delta^{\frac{m}{2}} \varphi(x)\ dx\\
&=\frac{c_{N,\sigma}}{2}\int_{\R^N}\int_{\R^N}\frac{(\Delta^{\frac{m}{2}} u(x)-\Delta^{\frac{m}{2}} u(y))\cdot(\Delta^{\frac{m}{2}}\varphi(x)-\Delta^{\frac{m}{2}}\varphi(y))}{|x-y|^{N+2\sigma}}\ dxdy.
\end{align*}
And if $m$ is odd, then
\begin{align*}
&\int_{\R^N} |\xi|^{2s}\cF u(\xi)\cF\varphi(\xi)\ d\xi= \int_{\R^N} |\xi|^{s-1} (-i) \xi\cF u(\xi)\cdot \ i\xi|\xi|^{s-1}\cF\varphi(\xi)\ d\xi\\
&=\int_{\R^N} |\xi|^{s-1} (- i) \xi\cF u(\xi)\cdot \overline{(-i\xi|\xi|^{s-1}\cF\varphi(\xi))}\ d\xi\\
&= \int_{\R^N}(-\Delta)^{\sigma/2} \nabla \Delta^{\frac{m-1}{2}} u(x)\cdot (-\Delta)^{\sigma/2} \nabla\Delta^{\frac{m-1}{2}} \varphi(x)\ dx\\
&=\frac{c_{N,\sigma}}{2}\int_{\R^N}\int_{\R^N}\frac{(\nabla\Delta^{\frac{m-1}{2}}u(x)-\nabla\Delta^{\frac{m-1}{2}} u(y))\cdot(\nabla\Delta^{\frac{m-1}{2}} \varphi(x)-\nabla \Delta^{\frac{m-1}{2}}\varphi(y))}{|x-y|^{N+2\sigma}}\ dxdy.
\end{align*}

This proves the first part. If, in addition, $u\in H^{2s}(\R^N)$, then 
\begin{align*}
\int_{\R^N}|(-\Delta)^{s}u(x)|^2\ dx=\int_{\R^N}|\xi|^{4s}\left|\cF u(\xi)\right|^2\ d\xi=\cE_{2s}(u,u)<\infty,
\end{align*}
by standard properties of the Fourier transform. Now the last part follows from the above calculations.
\end{proof}

\begin{remark}\label{commutation}
If $u\in H^{2s}(\R^N)$ then it follows from the proof of Proposition \ref{justification} that 
\[
(-\Delta)^{s}u=(-\Delta)^{m}(-\Delta)^{\sigma}u=(-\Delta)^{\sigma}(-\Delta)^{m}u=\left\{\begin{aligned}
& (-\Delta)^{\frac{m}{2}}(-\Delta)^{\sigma}(-\Delta)^{\frac{m}{2}}u&\text{for $m$ even}\\
&\diw (-\Delta)^{\frac{m-1}{2}}(-\Delta)^{\sigma}(-\Delta)^{\frac{m-1}{2}} \nabla u &\text{for $m$ odd}
\end{aligned}\right.
\]
where $(-\Delta)^{\sigma}$ is defined as in (\ref{fraclaplace}) (see also Proposition \ref{interchange-der} for a general statement on the interchange of derivatives).
\end{remark}

\subsection{Poincar\'e Inequality and principal eigenvalues}

The following shows that $\cE_{s}$ satisfies a Poincar\'{e}-type inequality in bounded domains. This yields that $\cE_{s}$ is a scalar product and that $(\cH^{s}_0(\Omega),\cE_{s})$ is a Hilbert space. Let $\lambda_{1,s}=\lambda_{1,s}(\Omega)$ and $\lambda_{1,1}=\lambda_{1,1}(\Omega)$ denote the first eigenvalue of $((-\Delta)^s,\cH_0^s(\Omega))$ and of $(-\Delta,H^1_0(\Omega))$ respectively.

\begin{prop}[Poincar\'e inequality]\label{l2}
Let $\Omega\subset \R^N$ be an open and bounded set with Lipschitz boundary. For all $u\in \cH^{s}_0(\Omega)$ we have that
\[
\cE_{s}( u, u)\geq \lambda_{1,s}\|u\|_{L^2(\Omega)}^2 \qquad\text{ and }\qquad \cE_{s}( u, u)\geq \left\{\begin{aligned}
&\lambda_{1,\sigma}\|\Delta^{\frac{m}{2}} u\|_{L^2(\Omega)}^2 && \text{ if $m$ is even}\\
&\lambda_{1,\sigma}\|\nabla \Delta^{\frac{m-1}{2}} u\|_{L^2(\Omega)}^2 && \text{ if $m$ is odd,}
\end{aligned}\right.
\]
where
\begin{align}\label{inf}
 \lambda_{1,s}=\lambda_{1,s}(\Omega):=\min_{u\in \cH^{s}_0(\Omega)\backslash \{0\}}\frac{\cE_s(u,u)}{\|u\|_{L^2(\Omega)}^2}>0,
\end{align}
$\lambda_{1,s}\geq \lambda_{1,1}^\frac{m}{2}\lambda_{1,\sigma}$ if $m$ is even, and $\lambda_{1,s}\geq \lambda_{1,1}^\frac{m+1}{2}\lambda_{1,\sigma}$ if  $m$ is odd.  In particular, $\lim\limits_{r\to0}\inf\limits_{|\Omega|=r} \lambda_{1,s}(\Omega)=\infty$. Moreover, $(\cH^{s}_0(\Omega),\cE_{s}(\cdot,\cdot))$ is a Hilbert space.
\end{prop}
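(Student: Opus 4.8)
The plan is to prove the Poincar\'e inequality first in the two "base" cases --- the pure fractional case $m=0$ and the first-order spectral estimates --- and then bootstrap to general $s=m+\sigma$ by iterating the classical inequality for $-\Delta$ on $H^1_0(\Omega)$. Concretely, for $m=0$ the claim $\cE_\sigma(u,u)\geq \lambda_{1,\sigma}\|u\|_{L^2(\Omega)}^2$ with $\lambda_{1,\sigma}>0$ is already standard for the fractional Laplacian on bounded Lipschitz domains (compact embedding $\cH^\sigma_0(\Omega)\embed L^2(\Omega)$ via Rellich, plus the direct method applied to the Rayleigh quotient in \eqref{inf}); I would cite this and the analogous fact that the infimum in \eqref{inf} is attained, so $\lambda_{1,\sigma}>0$. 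For $m\geq 1$ the key identity is the Fourier representation $\cE_s(u,u)=\int_{\R^N}|\xi|^{2s}|\cF u(\xi)|^2\,d\xi$ from Proposition \ref{justification}, which lets me split $|\xi|^{2s}=|\xi|^{2\sigma}\cdot|\xi|^{2m}$ and recognize the inner factor as the symbol of $(-\Delta)^m$ applied to $u$ --- more precisely, $\cE_s(u,u)=\cE_\sigma(w,w)$ where $w=\Delta^{m/2}u$ if $m$ is even and $w=\nabla\Delta^{(m-1)/2}u$ if $m$ is odd (interpreting the gradient componentwise as in \eqref{bilin:def}). Note $w\in\cH^\sigma_0(\Omega)$ since $u\in\cH^s_0(\Omega)$ vanishes outside $\Omega$ together with all its derivatives in the trace sense.

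Given this, the second chain of inequalities is immediate: applying the $m=0$ Poincar\'e inequality to $w$ yields $\cE_s(u,u)=\cE_\sigma(w,w)\geq\lambda_{1,\sigma}\|w\|_{L^2(\Omega)}^2$, which is exactly the stated bound with $w=\Delta^{m/2}u$ or $w=\nabla\Delta^{(m-1)/2}u$. For the first inequality $\cE_s(u,u)\geq\lambda_{1,s}\|u\|_{L^2(\Omega)}^2$ and the constant estimates, I would then iterate the classical Poincar\'e/Friedrichs inequality $\|v\|_{L^2(\Omega)}^2\leq\lambda_{1,1}^{-1}\|\nabla v\|_{L^2(\Omega)}^2$ (equivalently $\lambda_{1,1}\|v\|_{L^2}^2\le\int|\nabla v|^2$, valid for $v\in H^1_0(\Omega)$) applied successively to $u,\nabla u,\Delta u,\dots$. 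In the even case, $m/2$ applications of $\|\Delta v\|_{L^2}^2=\int|\xi|^4|\cF v|^2\geq\lambda_{1,1}^2\|v\|_{L^2}^2$-type bounds (more carefully, pairing $\|\Delta^{j}u\|_{L^2}\geq\lambda_{1,1}\|\Delta^{j-1}u\|_{L^2}$-type steps via the spectral theorem, since each intermediate function lies in $H^1_0$) give $\|\Delta^{m/2}u\|_{L^2}^2\geq\lambda_{1,1}^m\|u\|_{L^2}^2$; combined with $\cE_s(u,u)\geq\lambda_{1,\sigma}\|\Delta^{m/2}u\|_{L^2}^2$ this yields $\cE_s(u,u)\geq\lambda_{1,1}^m\lambda_{1,\sigma}\|u\|_{L^2}^2$. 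Wait --- the statement claims exponent $m/2$, not $m$; I would reconcile this by noting that each "derivative pair" (going from $\Delta^{j-1}u$ to $\Delta^j u$ costs two gradients but only one factor of $\lambda_{1,1}$ in the sharp Poincar\'e constant for $-\Delta$, since $\lambda_{1,1}$ is the first eigenvalue of $-\Delta$ not of $(-\Delta)^2$), so the correct bookkeeping gives $\lambda_{1,1}^{m/2}$; I will write this step out carefully using the spectral decomposition of the Dirichlet Laplacian. The odd case is identical with one extra gradient, giving the $(m+1)/2$ exponent. Existence of the minimizer in \eqref{inf} (hence $\lambda_{1,s}>0$) follows from the direct method: a minimizing sequence is bounded in $\cH^s_0(\Omega)$, hence has a weakly convergent subsequence, and the embedding $\cH^s_0(\Omega)\embed\embed L^2(\Omega)$ is compact on bounded Lipschitz domains, so the limit attains the infimum and is nonzero.

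For the scaling statement $\lim_{r\to0}\inf_{|\Omega|=r}\lambda_{1,s}(\Omega)=\infty$: since $\lambda_{1,s}(\Omega)\geq\lambda_{1,1}(\Omega)^{\lceil m/2\rceil}\lambda_{1,\sigma}(\Omega)$ (or the appropriate even/odd exponent) and since $\lambda_{1,1}(\Omega)\to\infty$ as $|\Omega|\to0$ by the Faber--Krahn inequality (or simply domain monotonicity plus scaling, $\lambda_{1,1}(\Omega)\geq\lambda_{1,1}(B_R)=R^{-2}\lambda_{1,1}(B_1)$ for any ball $B_R\supset\Omega$ of comparable volume), and similarly $\lambda_{1,\sigma}(\Omega)\geq c|\Omega|^{-2\sigma/N}$, the product blows up. Finally, that $(\cH^s_0(\Omega),\cE_s(\cdot,\cdot))$ is a Hilbert space: $\cE_s$ is clearly bilinear and symmetric, it is nonnegative, and the Poincar\'e inequality just proven shows it is positive definite ($\cE_s(u,u)=0\implies\|u\|_{L^2(\Omega)}=0\implies u=0$), so it is an inner product; moreover $\cE_s(u,u)^{1/2}$ is equivalent to the full norm $\|\cdot\|_{\cH^s_0(\Omega)}$ on bounded domains --- the inequality $\cE_s(u,u)\leq\|u\|_{\cH^s_0(\Omega)}^2$ is trivial, and the reverse follows by bounding each $\|\partial^\alpha u\|_{L^2(\Omega)}^2$ for $|\alpha|\leq m$ in terms of $\cE_s(u,u)$ using the intermediate Poincar\'e steps together with interpolation of the lower-order derivatives --- and since $(\cH^s_0(\Omega),\|\cdot\|_{\cH^s_0(\Omega)})$ is complete (being a closed subspace of $H^s(\R^N)$, as $u\mapsto u|_{\R^N\setminus\Omega}$ is continuous into $L^2$), so is $(\cH^s_0(\Omega),\cE_s)$.

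The main obstacle I anticipate is the precise bookkeeping of the constant in $\lambda_{1,s}\geq\lambda_{1,1}^{m/2}\lambda_{1,\sigma}$ (resp. $\lambda_{1,1}^{(m+1)/2}\lambda_{1,\sigma}$): one must be careful that $\lambda_{1,1}$ is the first \emph{Dirichlet} eigenvalue of $-\Delta$, and the cleanest route is to diagonalize $-\Delta$ on $L^2(\Omega)$ with eigenfunctions $\{e_k\}$ and eigenvalues $0<\lambda_{1,1}\le\lambda_{2,1}\le\cdots$, write $u=\sum c_k e_k$, so that $\|\Delta^{m/2}u\|_{L^2(\Omega)}^2=\sum\lambda_{k,1}^m c_k^2\geq\lambda_{1,1}^m\sum c_k^2=\lambda_{1,1}^m\|u\|_{L^2}^2$ --- but this gives exponent $m$, so the stated exponent $m/2$ must come from a different (weaker, hence still valid) estimate, namely only using $\|\Delta^{m/2}u\|_{L^2}\geq\lambda_{1,1}^{m/2}\|u\|_{L^2}$ via a \emph{single} application of the spectral inequality $\sum\lambda_{k,1}^m c_k^2\geq\lambda_{1,1}^{m}\sum c_k^2$; I suspect the paper's exponent $m/2$ is simply the one that drops out of their chosen (possibly suboptimal but sufficient) argument, and I will match whichever of the two is actually needed downstream. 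I would state the inequality in the form they need and verify it via the spectral decomposition, which makes all constants transparent.
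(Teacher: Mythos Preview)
Your core Poincar\'e argument is exactly the paper's: they write the single chain $\cE_s(u,u)=\cE_\sigma((-\Delta)^{m/2}u,(-\Delta)^{m/2}u)\geq\lambda_{1,\sigma}\|(-\Delta)^{m/2}u\|_{L^2}^2\geq\lambda_{1,1}^{m/2}\lambda_{1,\sigma}\|u\|_{L^2}^2$, call these ``standard estimates'', and say the odd case is analogous. Your worry about the exponent is legitimate---the clean iteration $\|(-\Delta)w\|\,\|w\|\geq\langle-\Delta w,w\rangle=\|\nabla w\|^2\geq\lambda_{1,1}\|w\|^2$ applied $m/2$ times gives $\|(-\Delta)^{m/2}u\|\geq\lambda_{1,1}^{m/2}\|u\|$, hence $\|(-\Delta)^{m/2}u\|^2\geq\lambda_{1,1}^{m}\|u\|^2$, so the paper's exponent is at best the unsquared version and the precise constant is not pinned down there either. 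One warning: drop the spectral-decomposition route. Writing $u=\sum c_k e_k$ and computing $\|\Delta^{m/2}u\|^2=\sum\lambda_{k,1}^m c_k^2$ requires $u$ to lie in the domain of the spectral power $(-\Delta_{\mathrm{Dir}})^{m/2}$, but functions in $\cH^s_0(\Omega)$ satisfy \emph{higher-order} zero boundary conditions that the Dirichlet eigenfunctions $e_k$ do not, so the eigenfunction expansion does not commute with $\Delta^{m/2}$ in the way you want. The integration-by-parts iteration is the safe path.

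The one genuine methodological difference is completeness. The paper does \emph{not} go through norm equivalence; it takes a Cauchy sequence $(u_n)$ in $(\cH^s_0(\Omega),\cE_s)$, uses the Poincar\'e bound to get an $L^2$-limit $u$, passes to an a.e.-convergent subsequence, and applies Fatou's lemma twice (once to show $\cE_s(u,u)<\infty$, once to show $\cE_s(u-u_{n_k},u-u_{n_k})\to 0$). This is shorter and sidesteps having to bound every intermediate $\|\partial^\alpha u\|_{L^2}$, $|\alpha|\leq m$, by $\cE_s(u,u)$, which your norm-equivalence route would require you to spell out. Your argument is fine in principle but is more work; the paper also does not separately prove attainment of the infimum or the $|\Omega|\to 0$ blowup beyond noting the lower bound on $\lambda_{1,s}$.
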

\begin{proof}
Let $u\in \cH^{s}_0(\Omega)$ and $m$ even. By standard estimates we have
\[
\cE_{\sigma}((-\Delta)^{\frac{m}{2}} u,(-\Delta)^{\frac{m}{2}} u)\geq \lambda_{1,\sigma}\|(-\Delta)^{\frac{m}{2}} u\|_{L^2(\Omega)}^2\geq  \lambda_{1,1}^{\frac{m}{2}}\lambda_{1,\sigma}\|u\|_{L^2(\Omega)}^2.
\]
Clearly this also implies that $\cE_{1+\sigma}$ is a scalar product and \eqref{inf} follows.  The case $m$ odd is analogous.

\medskip

We now prove that  $\cH^{s}_0(\Omega)$ is complete with respect to $\cE_{s}$. Let $(u_n)_n\subset \cH^{s}_0(\Omega)$ be a Cauchy sequence with respect to $\cE_{s}$. Hence by the above inequality it follows that $u_n\to u\in L^2(\Omega)$ for $n\to \infty$, where we use $L^2(\Omega)=\{u\in L^2(\R^N)\;:\; u= 0 \text{ on $\R^N\setminus \Omega$}\}$. Thus there is a subsequence $(u_{n_k})_k$ such that $u_{n_k}\to u$ a.e. in $\Omega$ as $k\to\infty$. By Fatou's Lemma we have
\[
\cE_{s}(u,u)\leq \liminf_{k\to\infty}\cE_{s}( u_{n_k}, u_{n_k})\leq \sup_{k\in \N}\cE_{s}( u_{n_k}, u_{n_k})<\infty,
\]
so that $u\in \cH^{s}_0(\Omega)$. Again by Fatou's Lemma we have for any $k\in \N$
\[
\cE_{s}(u-u_{n_k},u-u_{n_k})\leq \liminf_{j\to\infty}\cE_{s}(u_{n_j}- u_{n_k},u_{n_j}- u_{n_k})\leq \sup_{j\geq k}\cE_{s}(u_{n_j}- u_{n_k},u_{n_j}- u_{n_k})<\infty
\]
which gives $u_{n_k}\to u$ in $\cH^{s}_0(\Omega)$ for $k\to\infty$ since $(u_{n_k})_k$ is a Cauchy sequence with respect to $\cE_{s}$. This shows the completeness.
\end{proof}

\begin{remark}\label{generall2}
 The assumption on the Lipschitz regularity of the boundary in Proposition \ref{l2} can be removed if one argues instead with the Sobolev embedding of $H_0^{m}(\Omega)$ into $L^2(\Omega)$, but in this case the estimates for  $\lambda_{1,s}$ are not clear, since they rely on integration by parts.
\end{remark}

\begin{remark}For $\Omega$ smooth and $m=1$ we have the strict inequality $\lambda_{1,s}=\lambda_{1,1+\sigma}>\lambda_{1,1}\lambda_{1,\sigma}$. Indeed, let $A_su:=\sum_{i\in\mathbb N}a_i(u)\lambda_{i,1}^se_i$ denote the \emph{spectral} fractional Laplacian, where $e_i$ and $\lambda_{i,1}>0$ are the eigenfunctions and eigenvalues of $-\Delta$ in $H_0^1(\Omega)$ and $a_i(u):=\int_\Omega u e_i\ dx$ is the projection of $u$ in the direction $e_i$, see \cite{SV14,MN15}. We introduce also the following associated quadratic forms as in \cite{MN15},
\begin{align*}
 Q^D_s[u]&:= \int_{\mathbb R^N} |\xi|^{2s}|\cF u(\xi)|^2\ d\xi, &&u\in \operatorname{Dom}(Q^D_s):=\{u\in {\cal S}'(\R^N): Q^D_s[u]<\infty,\ \operatorname{supp}(u)\subset \overline{\Omega}\},\\
 Q^N_s[u]&:= \sum_{j\in\mathbb N} \lambda_{j,1}^s a_i(u)^2, &&u\in \operatorname{Dom}(Q^N_s):=\{u\in {\cal S}'(\R^N): Q^N_s[u]<\infty\},
\end{align*}
where ${\cal S}'$ denotes the space of distributions. Then, by \cite[Theorem 1 and Lemma 2]{MN15} we have that $Q^D_s[u]>Q^N_s[u]$ and $\operatorname{Dom}(Q^D_s)\subset \operatorname{Dom}(Q^N_s)$ for $s\in(1,2)$. Thus
\begin{align*}
 \lambda_{1,s}= \inf_{u\in \operatorname{Dom}(Q^D_s)} Q^D_s[u]\geq \inf_{u\in \operatorname{Dom}(Q^N_s)} Q^N_s[u]=\lambda_{1,1}^{s},
\end{align*}
since the first eigenvalue of $A_s$ is given by $\lambda_{1,1}^s$, as it is easily seen from the definition of $A_s$.  Furthermore, $\lambda_{1,\sigma}<(\lambda_{1,1})^\sigma$ for $\sigma\in(0,1)$ by \cite[Theorem 1]{SV14}. Thus, if $s=1+\sigma$ we have that $\lambda_{1,s}\geq(\lambda_{1,1})^{s}=\lambda_{1,1}(\lambda_{1,1})^\sigma>\lambda_{1,1}\lambda_{1,\sigma}$, as claimed.
\end{remark}

An immediate consequence of Proposition \ref{l2} and Remark \ref{generall2} is the following.
 
\begin{cor}\label{unique:weak}
Let $\Omega\subset \R^N$ be an open bounded set. Then for any $f\in L^2(\Omega)$ there is a unique weak solution $u\in \cH_0^{s}(\Omega)$ of $(-\Delta)^{s}u=f$ in $\Omega$.
\end{cor}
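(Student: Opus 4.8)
The plan is to obtain existence and uniqueness as a direct application of the Riesz representation theorem in the Hilbert space $(\cH_0^s(\Omega), \cE_s)$, whose completeness and whose status as a genuine inner product space were just established in Proposition \ref{l2} (using Remark \ref{generall2} to drop the Lipschitz hypothesis on $\partial\Omega$). First I would observe that, since $\Omega$ is bounded, the Poincar\'e inequality $\cE_s(u,u) \ge \lambda_{1,s}\|u\|_{L^2(\Omega)}^2$ shows that the linear functional
\[
L(\varphi) := \int_\Omega f(x)\,\varphi(x)\ dx, \qquad \varphi \in \cH_0^s(\Omega),
\]
is bounded on $\cH_0^s(\Omega)$: indeed $|L(\varphi)| \le \|f\|_{L^2(\Omega)}\|\varphi\|_{L^2(\Omega)} \le \lambda_{1,s}^{-1/2}\|f\|_{L^2(\Omega)}\,\cE_s(\varphi,\varphi)^{1/2}$. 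By the Riesz representation theorem there is a unique $u \in \cH_0^s(\Omega)$ with $\cE_s(u,\varphi) = L(\varphi)$ for all $\varphi \in \cH_0^s(\Omega)$.

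The next step is to reconcile this with the definition of weak solution in \eqref{wsol:def}, which tests only against $\varphi \in \cH_0^s(\Omega)$ with compact support in $\R^N$. Since $\Omega$ is bounded, every $\varphi \in \cH_0^s(\Omega)$ already has support contained in $\overline{\Omega}$, which is compact; hence the two classes of test functions coincide and the $u$ produced above is exactly a weak solution in the sense of \eqref{wsol:def1}. The condition $u \ge 0$ on $\R^N \setminus \Omega$ required for a supersolution holds trivially here with equality since $u \in \cH_0^s(\Omega)$ vanishes there, so $u$ is simultaneously a weak super- and subsolution, i.e.\ a weak solution.

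Uniqueness follows from the same framework: if $u_1, u_2$ are two weak solutions in $\cH_0^s(\Omega)$, then $w := u_1 - u_2 \in \cH_0^s(\Omega)$ satisfies $\cE_s(w,\varphi) = 0$ for all admissible $\varphi$, in particular for $\varphi = w$, so $\cE_s(w,w) = 0$; since $\cE_s$ is a scalar product on $\cH_0^s(\Omega)$ by Proposition \ref{l2}, this forces $w = 0$. I do not anticipate a genuine obstacle here — the only points requiring a little care are the verification that $L$ is well-defined and bounded (which is immediate from $f \in L^2(\Omega)$, boundedness of $\Omega$, and Poincar\'e) and the remark that on a bounded $\Omega$ the "compact support" restriction in \eqref{wsol:def} is vacuous, so that Riesz representation genuinely yields a weak solution in the stated sense rather than a solution against a strictly smaller test space.
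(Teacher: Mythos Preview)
Your proposal is correct and follows essentially the same approach as the paper's proof, which is a one-line invocation of the Riesz representation theorem on the Hilbert space $(\cH_0^s(\Omega),\cE_s)$ using Proposition~\ref{l2} and Remark~\ref{generall2}. Your additional remarks about the boundedness of the linear functional and the vacuousness of the compact-support restriction on a bounded domain are correct elaborations of details the paper leaves implicit.
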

\begin{proof}
The statement follows from Riesz Theorem, since $\cE_{s}$ is a scalar product on $\cH_0^{s}(\Omega)$ by Proposition \ref{l2} and Remark \ref{generall2}.
\end{proof}

\subsection{Properties with respect to smooth functions}

\begin{lemma}\label{lem:smooth-to-weak}
Let $\Omega\subset \R^N$ open. Then $C^{s+\epsilon}_c(\Omega)\subset \cH^{s}_0(\Omega)$ for every $\epsilon>0$.
\end{lemma}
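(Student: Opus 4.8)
The goal is to show that any $u \in C_c^{s+\epsilon}(\Omega)$ lies in $\mathcal H_0^s(\Omega)$, i.e. that $u \in H^s(\R^N)$, that $u \equiv 0$ on $\R^N \setminus \Omega$, and that $\|u\|_{\mathcal H_0^s(\Omega)} < \infty$. The vanishing condition is immediate from the definition of $C_c^{s+\epsilon}(\Omega)$, so the content is the membership in $H^s(\R^N)$ together with finiteness of the bilinear form $\mathcal E_s(u,u)$. My plan is to reduce everything to the scalar statement $C_c^{\sigma+\epsilon}(\R^N) \subset H^\sigma(\R^N)$ for $\sigma \in (0,1)$, applied to suitable derivatives of $u$. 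Writing $s = m + \sigma$ with $m \in \N_0$ and $\sigma \in (0,1)$, I would first observe that if $u \in C_c^{s+\epsilon}(\Omega) = C_c^{m+\sigma+\epsilon}(\Omega)$, then (enlarging $m$ if necessary when $\sigma + \epsilon \geq 1$, which only helps) the functions $\Delta^{m/2} u$ (if $m$ even) or $\partial_k \Delta^{(m-1)/2} u$ (if $m$ odd) belong to $C_c^{\sigma+\epsilon}(\R^N)$, being derivatives of order $m$ of a compactly supported $C^{m,\sigma+\epsilon}$ function. In particular these derivatives, and all lower-order derivatives $\partial^\alpha u$ with $|\alpha| \le m$, lie in $L^2(\R^N)$ with support in $\overline\Omega$, so the $L^2$-part of the norm $\|u\|_{\mathcal H_0^s(\Omega)}$ is finite.

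The core step is then to bound $\mathcal E_s(u,u)$. By definition \eqref{bilin:def} this is $\mathcal E_\sigma(w,w)$ (resp.\ $\sum_k \mathcal E_\sigma(w_k,w_k)$) where $w = \Delta^{m/2}u \in C_c^{\sigma+\epsilon}(\R^N)$ (resp.\ $w_k = \partial_k\Delta^{(m-1)/2}u \in C_c^{\sigma+\epsilon}(\R^N)$), so it suffices to show $\mathcal E_\sigma(w,w) < \infty$ for $w \in C_c^{\sigma+\epsilon}(\R^N)$. For this I would split the Gagliardo-type double integral
\[
\mathcal E_\sigma(w,w) = \frac{c_{N,\sigma}}{2}\int_{\R^N}\int_{\R^N}\frac{|w(x)-w(y)|^2}{|x-y|^{N+2\sigma}}\,dx\,dy
\]
into the region $|x-y| < 1$ and the region $|x-y| \ge 1$. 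On $\{|x-y| \ge 1\}$ the kernel is integrable and $w \in L^2$, so that piece is controlled by $\|w\|_{L^2}^2$. On $\{|x-y| < 1\}$ I would use the Hölder bound $|w(x)-w(y)| \le [w]_{C^{\sigma+\epsilon}}|x-y|^{\sigma+\epsilon}$ when both points lie in a fixed neighborhood of $\supp\,w$, and the $L^\infty$ bound $|w(x)-w(y)| \le 2\|w\|_\infty$ otherwise (only the case where at least one of $x,y$ is near $\supp\,w$ contributes, since otherwise $w(x)=w(y)=0$); the Hölder estimate gives a local integrand $\lesssim |x-y|^{2(\sigma+\epsilon)-N-2\sigma} = |x-y|^{2\epsilon - N}$, which is integrable near the diagonal, and using compactness of the relevant region one concludes finiteness. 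Equivalently, and perhaps more cleanly, one may invoke the standard embedding $C^{\sigma+\epsilon}(\overline U) \cap \{u \equiv 0 \text{ outside } U\} \hookrightarrow H^\sigma(\R^N)$ for bounded $U$, which is exactly this computation packaged as a known fact (e.g.\ \cite{NPV11}).

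I do not expect a serious obstacle here; the only mild subtlety is bookkeeping with the operators $\Delta^{m/2}$ and $\nabla\Delta^{(m-1)/2}$ acting on a function of finite (rather than infinite) smoothness — one must make sure that taking $m$ derivatives of a $C_c^{m,\sigma+\epsilon}$ function indeed lands in $C_c^{0,\sigma+\epsilon} = C_c^{\sigma+\epsilon}$, and that when $m$ is odd the vector-valued version $\mathcal E_\sigma(\nabla(\cdots),\nabla(\cdots))$ is handled componentwise exactly as the scalar case. Once that is in place, combining the finite $L^2$-norms of $\partial^\alpha u$, $|\alpha| \le m$, with the finiteness of $\mathcal E_s(u,u)$ yields $\|u\|_{\mathcal H_0^s(\Omega)} < \infty$, and since $u \in C_c(\R^N)$ is automatically $0$ on $\R^N \setminus \Omega$, we get $u \in \mathcal H_0^s(\Omega)$.
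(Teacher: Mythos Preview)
Your proposal is correct and follows essentially the same route as the paper: reduce to showing $\mathcal E_\sigma(w,w)<\infty$ for $w\in C_c^{\sigma+\epsilon}(\R^N)$ (with $w$ the appropriate $m$-th order derivative of $u$), then split the Gagliardo double integral into a near-diagonal part controlled by the H\"older seminorm (yielding the locally integrable kernel $|x-y|^{2\epsilon-N}$) and a far-from-diagonal part controlled by compact support and boundedness. The paper splits according to whether $x,y$ lie in a fixed ball containing the support, whereas you split by $|x-y|\lessgtr 1$; both are the same argument.
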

\begin{proof}
Let $m$ be even and without loss of generality assume that $\epsilon\in(0,1-\sigma]$. Let $f\in C^{m,\sigma+\epsilon}_c(\Omega)$ and $D:=\supp (f)$. There is $C>0$ such that 
\begin{align*}
|(-\Delta)^{\frac{m}{2}} f(x)-(-\Delta)^{\frac{m}{2}} f(y)|^2\leq C|x-y|^{2\sigma+2\epsilon}\quad \text{ and }\quad |f(x)|^2\leq C\quad \text{ for all $x,y\in \R^N$.}
\end{align*}
Let $R>0$ so that $D\subset\subset U:=B_R(0)$ and $\dist(D, \R^N\setminus U)\geq 1$. Then
\begin{align*}
 \cE_{\sigma}((-\Delta)^{\frac{m}{2}} f,(-\Delta)^{\frac{m}{2}} f)&\leq C \int_{U}\int_{U}|x-y|^{2\epsilon-N}\ dxdy +2C\int_D \int_{\R^N\setminus U}|x-y|^{-N-2\sigma}\ dxdy
<\infty.
\end{align*}
The case $m$ odd follows similarly.
\end{proof}

\begin{lemma}\label{existence}
Let $\Omega\subset\R^N$ be open and $u\in {C^{2m+2}_c(\Omega)}$. Then 
 \begin{align*}
  \cE_{s}(u,v)=\int_{\Omega}(-\Delta)^{s}u(x) v(x)\ dx\qquad \text{ for all $v\in \cH_0^{s}(\Omega)$.}
 \end{align*}
\end{lemma}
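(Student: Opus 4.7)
The strategy is to reduce the statement directly to the last assertion of Proposition \ref{justification}, which already identifies the bilinear form $\cE_s(\cdot,\cdot)$ with the Fourier-defined operator $(-\Delta)^s$ on pairs with sufficient Sobolev regularity. The only work required is checking the regularity hypothesis and then restricting the domain of integration.

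First I would verify that $u\in H^{2s}(\R^N)$. Since $u\in C^{2m+2}_c(\Omega)$, by the paper's conventions $u\in C^{2m+2}(\R^N)$ with $\supp u\subset\subset\Omega$. Hence $u$ and all its partial derivatives up to order $2m+2$ are continuous and compactly supported, so they lie in every $L^p(\R^N)$; in particular $u\in H^{2m+2}(\R^N)$. As $2s=2m+2\sigma<2m+2$, the standard monotonicity of the Bessel-potential scale gives $u\in H^{2s}(\R^N)$, which in turn ensures that $(-\Delta)^s u:=\cF^{-1}(|\cdot|^{2s}\cF u)\in L^2(\R^N)$ is well-defined.

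Next I would invoke the last part of Proposition \ref{justification}: for this $u$ and every $\varphi\in H^{s}(\R^N)$,
\[
\cE_s(u,\varphi)=\int_{\R^N}(-\Delta)^s u(x)\,\varphi(x)\ dx.
\]
Since $\cH^{s}_0(\Omega)\subset H^s(\R^N)$ by \eqref{Hs0:def}, the identity applies in particular to any $v\in \cH^{s}_0(\Omega)$. Finally, because such a $v$ vanishes on $\R^N\setminus\Omega$, the right-hand side reduces to $\int_\Omega (-\Delta)^s u(x)\,v(x)\,dx$, yielding the claim.

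I do not anticipate a genuine obstacle: the substantive identification of the bilinear form with the nonlocal operator was already performed via the Fourier transform in Proposition \ref{justification}, and Lemma \ref{lem:smooth-to-weak} guarantees that smooth compactly supported functions belong to $\cH^s_0(\Omega)$ so that $\cE_s(u,v)$ is a priori finite for both sides to make sense. The condition $C^{2m+2}_c$ is slightly stronger than the minimum required for $u\in H^{2s}(\R^N)$, but is convenient because it avoids discussing fractional Hölder spaces and allows the classical pointwise expression of $(-\Delta)^s u$ (via Remark \ref{commutation}) to coincide with the Fourier definition used in Proposition \ref{justification}.
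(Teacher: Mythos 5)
Your argument is correct and follows the same route as the paper, which also deduces the lemma from the last part of Proposition \ref{justification} (together with Lemma \ref{lem:smooth-to-weak}); your verification that $u\in H^{2m+2}(\R^N)\subset H^{2s}(\R^N)$ and the restriction of the integral to $\Omega$ using $v\equiv 0$ on $\R^N\setminus\Omega$ are exactly the details the paper leaves implicit.
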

\begin{proof} 
This is a consequence of Proposition \ref{justification} and Lemma \ref{lem:smooth-to-weak}. A direct proof can also be done using integration by parts if $\Omega$ has Lipschitz boundary.
\end{proof}

We now introduce the space $S^{k}_{s}$, which allows us to estimate pointwise fractional Laplacians, cf. \cite[Section 2]{FW15}.  For $s>0$ and $k\in \N$ let 
\[
S^{k}_{s}:=\{\phi\in C^k(\R^N)\;:\; \sup_{x\in \R^N}(1+|x|^{N+2s})\sum_{|\alpha|\leq k}|\partial^{\alpha}\phi(x)| <\infty\}
\]
endowed with the norm $\|\phi\|_{k,s}:= \sup\limits_{x\in \R^N}(1+|x|^{N+2s})\sum\limits_{|\alpha|\leq k}|\partial^{\alpha}\phi(x)|$. In particular, $\cS\subset S^{k}_{s}$.

\begin{lemma}\label{boundedevaluationexistence}
Let $\sigma\in(0,1]$, $m\in \N_0$, and $s=m+\sigma$. There is ${C=C(N,m,\sigma)>0}$ such that
\begin{align}\label{bd:eq}
 |(-\Delta)^{s}f(x)|\leq C\frac{\|f\|_{2m+2,s}}{1+|x|^{N+2s}}\qquad \text{  for every $f\in S^{2m+2}_{s}$ and for all $x\in \R^N$}.
\end{align}
\end{lemma}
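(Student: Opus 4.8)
The estimate is a weighted pointwise bound for $(-\Delta)^s f$ when $f$ decays like $|x|^{-N-2s}$ together with its derivatives up to order $2m+2$. The natural strategy is to split the operator as $(-\Delta)^s = (-\Delta)^m (-\Delta)^\sigma$ (or, when $m$ is odd, as the corresponding gradient-divergence form), use that $(-\Delta)^\sigma$ is given by the singular integral \eqref{fraclaplace}, and estimate each contribution with the decay encoded in $\|f\|_{2m+2,s}$. First I would treat the base case $m=0$, i.e. bound $(-\Delta)^\sigma f(x)$ for $\sigma\in(0,1)$ and $f\in S^2_\sigma$: split the principal-value integral in \eqref{fraclaplace} into the near region $|x-y|\le 1$ and the far region $|x-y|>1$. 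On the near region use a second-order Taylor expansion of $f$ at $x$, so that $|u(x)-u(y)|\le C\|f\|_{2,\sigma}|x-y|^2 (1+|x|)^{-N-2\sigma}$ after using the decay of $D^2 f$ on the segment $[x,y]$ (here one uses that on the near region $1+|y|$ is comparable to $1+|x|$); the resulting integral $\int_{|x-y|\le 1}|x-y|^{2-N-2\sigma}\,dy$ is finite. On the far region bound $|u(x)-u(y)|\le |u(x)|+|u(y)|$ and use the decay of $f$ itself: $\int_{|x-y|>1}\frac{|f(x)|+|f(y)|}{|x-y|^{N+2\sigma}}\,dy \le C\|f\|_{2,\sigma}(1+|x|)^{-N-2\sigma} + C\int_{|x-y|>1}\frac{(1+|y|)^{-N-2\sigma}}{|x-y|^{N+2\sigma}}\,dy$, and the last integral is again $\lesssim (1+|x|)^{-N-2\sigma}$ by a standard convolution-of-two-decaying-kernels estimate. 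For $\sigma=1$ the operator is just $-\Delta$ and the bound is immediate from the definition of $\|f\|_{2,1}$.

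\textbf{The inductive step.} Having the $m=0$ case, I would prove the general case by induction on $m$, relying on the factorization $(-\Delta)^{m+\sigma} = (-\Delta)\,(-\Delta)^{m-1+\sigma}$. The key lemma to isolate is that the map $f\mapsto (-\Delta)^\sigma f$ (with $\sigma\in(0,1]$) sends $S^{k}_{s}$ continuously into $S^{k-2}_{s}$ for $k\ge 2$: indeed $\partial^\alpha (-\Delta)^\sigma f = (-\Delta)^\sigma \partial^\alpha f$, and applying the $m=0$ estimate to $\partial^\alpha f \in S^{k-|\alpha|}_{s}$ for $|\alpha|\le k-2$ gives $|\partial^\alpha (-\Delta)^\sigma f(x)|\le C\|f\|_{k,s}(1+|x|^{N+2s})^{-1}$, which is exactly the statement that $(-\Delta)^\sigma f \in S^{k-2}_s$ with norm controlled by $\|f\|_{k,s}$. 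Iterating $m$ times peels off $\|\cdot\|_{2m+2,s}\to\|\cdot\|_{2,s}$, and a final application of the $-\Delta$ bound (one more derivative pair) yields \eqref{bd:eq}. When $m$ is odd one uses $(-\Delta)^s = \operatorname{div}(-\Delta)^{(m-1)/2}(-\Delta)^\sigma (-\Delta)^{(m-1)/2}\nabla$ as in Remark \ref{commutation}, but this only shuffles derivatives and the same $S^k_s\to S^{k-2}_s$ mapping property of $(-\Delta)^\sigma$ drives the argument; the extra $\nabla$ and $\operatorname{div}$ cost two derivatives, which is why the index $2m+2$ rather than $2m$ appears. Alternatively, and perhaps more cleanly, one can avoid the odd/even dichotomy entirely by noting $(-\Delta)^s f = (-\Delta)^m\big((-\Delta)^\sigma f\big)$ with $(-\Delta)^m$ a classical differential operator of order $2m$ acting on $(-\Delta)^\sigma f\in S^{2}_s$, and $S^2_s\hookrightarrow$ functions with the desired decay whose $(-\Delta)^m$ needs $2m+2$ regularity on $f$ to be controlled.

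\textbf{Main obstacle.} The only real subtlety is the near-region estimate for $(-\Delta)^\sigma$ when $\sigma$ is close to $1$: a naive bound using only $|u(x)-u(y)|\le C|x-y|$ gives $\int_{|x-y|\le1}|x-y|^{1-N-2\sigma}\,dy$, which diverges for $\sigma\ge 1/2$, so one genuinely needs the symmetry of the principal value and the second-order Taylor remainder (exploiting $\int_{|h|\le 1} h\cdot\nabla f(x)\,|h|^{-N-2\sigma}\,dh = 0$ by oddness) to gain the extra power and land on $|x-y|^{2-N-2\sigma}$, which is integrable for all $\sigma<1$. This is standard but must be done carefully, including checking that the decay weight $(1+|x|)^{-N-2\sigma}$ is not lost — i.e. that on the segment joining $x$ and $y$ with $|x-y|\le 1$ one has $1+|z|\gtrsim 1+|x|$, hence $|D^2 f(z)|\le \|f\|_{2,\sigma}(1+|x|)^{-N-2\sigma}$ up to a constant. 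The far-region and convolution estimates are routine. I would also record at the outset that it suffices to prove \eqref{bd:eq} for real-valued Schwartz $f$ and then pass to general $f\in S^{2m+2}_s$ by density in the $\|\cdot\|_{2m+2,s}$-norm together with the continuity of the pointwise evaluation, though in fact the integral estimates above already hold verbatim for any $f\in S^{2m+2}_s$, so this density remark is optional.
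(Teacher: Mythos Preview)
There is a genuine gap in your inductive step. Your key lemma --- that $(-\Delta)^\sigma$ maps $S^k_s$ continuously into $S^{k-2}_s$ --- is \emph{false} whenever $s>\sigma$ (i.e.\ $m\ge1$). The obstruction is the far-region term: for any $f\in S^k_s$ and $|x|$ large,
\[
\int_{|y|>1}\frac{|f(x+y)|}{|y|^{N+2\sigma}}\,dy\ \gtrsim\ |x|^{-N-2\sigma}\int_{|z|<1}|f(z)|\,dz,
\]
so $(-\Delta)^\sigma f(x)$ decays at best like $|x|^{-N-2\sigma}$, not $|x|^{-N-2s}$. (Take any $f\in C^\infty_c(\R^N)$ with $\int f\neq0$: then $(-\Delta)^\sigma f(x)\sim -c_{N,\sigma}|x|^{-N-2\sigma}\int f$ as $|x|\to\infty$, regardless of how fast $f$ decays.) Your own far-region convolution estimate in the base case correctly produces the weight $(1+|x|)^{-N-2\sigma}$, which matches $s$ only when $m=0$; for $m\ge1$ it cannot be upgraded to $(1+|x|)^{-N-2s}$. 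The same obstruction kills both alternatives you sketch: in $(-\Delta)^m[(-\Delta)^\sigma f]$ the inner function is only in $S^{2m}_\sigma$, so the outer local operator yields decay $|x|^{-N-2\sigma}$; in the induction $(-\Delta)^s=(-\Delta)(-\Delta)^{s-1}$ the hypothesis gives $(-\Delta)^{s-1}f\in S^2_{s-1}$, and one more Laplacian does not improve the decay to weight $s$.

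The paper's proof fixes exactly this point with a device your plan is missing. It sets $\varphi:=(-\Delta)^m f$, writes $(-\Delta)^\sigma\varphi$ via second differences, and on the far region $|y|>1$ \emph{integrates by parts $m$ times} in $\int_{|y|>1}\varphi(x+y)\,|y|^{-N-2\sigma}\,dy$, moving the $m$ Laplacians from $\varphi=(-\Delta)^m f$ onto the kernel. Since $(-\Delta_y)^m|y|^{-N-2\sigma}=c\,|y|^{-N-2s}$, this converts the bad integral into $\int_{|y|>1}|f(x+y)|\,|y|^{-N-2s}\,dy$ (plus boundary terms on $\partial B$ controlled by $\|f\|_{2m+2,s}(1+|x|^{N+2s})^{-1}$), and \emph{this} convolution does have the required $|x|^{-N-2s}$ decay. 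Trading derivatives on $f$ for extra decay of the kernel is the crux; your near-region analysis and the $m=0$ case are fine, but the far region for $m\ge1$ needs this additional idea.
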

\begin{proof}
If $\sigma=1$, then \eqref{bd:eq} follows by definition with $C=1$. For the rest of the proof, we denote by $C>0$ possibly different constants depending only on $N,$ $m,$ and $\sigma$. Let $\sigma\in(0,1)$ and note that $(-\Delta)^{m+\sigma} f= (-\Delta)^{\sigma}(-\Delta)^{m} f$ by Remark \ref{commutation}. To simplify the notation let $\varphi:=(-\Delta)^m f$ and recall that $B:=B_1(0)$.  For $x\in\R^N$ we have, by the Mean Value Theorem (see Lemma \ref{diffrep}),
\begin{align}\label{A5}
|&(-\Delta)^{\sigma+m} f(x)|=\frac{c_{N,\sigma}}{2}\Bigg|\int_{\R^N} \frac{2\phi(x)-\phi(x+y)-\phi(x-y)}{|y|^{N+2\sigma}}\ dy\Bigg|\notag\\
&\leq C\int_{B}\int_0^1\int_0^1\frac{|H_{\phi}( x+(t-\tau)y)|}{|y|^{N+2\sigma-2}}\ d\tau dt dy+\Bigg|\int_{\R^N\setminus B} \frac{2\phi(x)-\phi(x+y)-\phi(x-y)}{|y|^{N+2\sigma}}\ dy\Bigg|=:f_1+f_2.
\end{align}
Note that 
\begin{align}\label{A4}
f_1&\leq C\|f\|_{2m+2,s}\int_{B}\int_0^1\int_0^1\frac{|y|^{-N-2\sigma+2}}{1 +|x+(t-\tau)y|^{N+2s}}\ d\tau dt dy  \leq C\frac{\|f\|_{2m+2,s}}{1+|x|^{N+2s}},\\
 f_2&\leq 2\int_{\R^N\setminus B} \frac{|\phi(x)|}{|y|^{N+2\sigma}}\ dy+2\Bigg|\int_{\R^N\setminus B} \frac{\phi(x+y)}{|y|^{N+2\sigma}}\ dy\Bigg|
 \leq C\frac{\|f\|_{2m+2,s}}{1+|x|^{N+2s}}+2\Bigg|\int_{\R^N\setminus B} \frac{\phi(x+y)}{|y|^{N+2\sigma}}\ dy\Bigg|.\label{A3}
\end{align}
Using integration by parts $m-$times we obtain
\begin{align}\label{A2}
 \Bigg |\int_{\R^N\setminus B} \frac{\phi(x+y)}{|y|^{N+2\sigma}}\ dy\Bigg|=\Bigg |\int_{\R^N\setminus B} \frac{(-\Delta)^m f(x+y)}{|y|^{N+2\sigma}}\ dy\Bigg|\leq C\frac{\|f\|_{2m+2,s}}{1+|x|^{N+2s}}+ C\int_{\R^N\setminus B} \frac{|f(x+y)|}{|y|^{N+2\sigma+2m}}\ dy.
\end{align}
Moreover,
\begin{align}\label{A1}
 \int_{\R^N\setminus B} \frac{|f(x+y)|}{|y|^{N+2\sigma+2m}}\ dy\leq \frac{\|f\|_{2m+2,s}}{1+|x|^{N+2s}}\int_{\R^N\setminus B} \frac{1+|x|^{N+2s}}{(1+|x+y|^{N+2s})|y|^{N+2s}}\ dy
\end{align}

By \eqref{A5}-\eqref{A1} it suffices to show that there is $C>0$ depending only on $N,$ $m,$ and $\sigma$ such that
\begin{align}\label{A6}
\int_{\R^N\setminus B} \frac{1+|x|^{N+2s}}{(1+|x+y|^{N+2s})|y|^{N+2s}}\ dy<C
\end{align}
for all $x\in\R^N$.  If $|x|<2$ then \eqref{A6} follows by taking the maximum over $x\in 2B$. We now argue as in \cite[Lemma 2.1]{FW15}.  Fix $|x|\geq 2$ and let $U:=\{y\in\R^N\backslash B : |x+y|\geq \frac{|x|}{2}\}$. If $y\in U$ then $1+|x|^{N+2s}\leq C(1+|x+y|^{N+2s})$ and if $y\in \R^N\backslash U$ then $|y|>\frac{|x|}{2}$.  Thus,
\begin{align*}
&\int_{U} \frac{1+|x|^{N+2s}}{(1+|x+y|^{N+2s})|y|^{N+2s}}\ dy \leq C \int_{\R^N\backslash B} |y|^{-N-2s}\ dy<C,\\
 &\int_{\R^N\backslash U} \frac{1+|x|^{N+2s}}{(1+|x+y|^{N+2s})|y|^{N+2s}}\ dy \leq C\frac{1+|x|^{N+2s}}{|x|^{N+2s}}\int_{\R^N}(1+|x+y|^{N+2s})^{-1}\ dy<C.
 \end{align*}
This implies \eqref{A6} and finishes the proof.
\end{proof}

\begin{cor}\label{boundedevaluation}
For every $f\in {C^{2m+2}_c(\R^N)}$ there exists a constant $C=C(N,m,\sigma,f)>0$ such that $\cE_{s}( f, \varphi)\leq C\int_{\R^N} \varphi(y)\ dy$ for all nonnegative $\varphi\in H^{s}(\R^N)$ and $\|(-\Delta)^{s} f\|_{L^{\infty}(\R^N)}\leq C$.
\end{cor}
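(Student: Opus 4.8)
The plan is to derive Corollary~\ref{boundedevaluation} directly from Lemma~\ref{boundedevaluationexistence} together with Lemma~\ref{existence}. First I would observe that for $f\in C^{2m+2}_c(\R^N)$ the support $D:=\supp f$ is compact, so $f\in S^{2m+2}_s$ with $\|f\|_{2m+2,s}<\infty$; indeed all derivatives of $f$ up to order $2m+2$ are bounded and compactly supported, so the weight $(1+|x|^{N+2s})$ is harmless. Hence Lemma~\ref{boundedevaluationexistence} applies and gives a constant $C_1=C_1(N,m,\sigma)>0$ with
\[
|(-\Delta)^s f(x)|\leq C_1\frac{\|f\|_{2m+2,s}}{1+|x|^{N+2s}}\qquad\text{for all }x\in\R^N.
\]
In particular $\|(-\Delta)^s f\|_{L^\infty(\R^N)}\leq C_1\|f\|_{2m+2,s}=:C$, which is the second assertion; absorbing the $f$-dependent factor $\|f\|_{2m+2,s}$ into the constant is legitimate since the corollary allows $C$ to depend on $f$.

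Next I would treat the bilinear-form estimate. Since $f\in C^{2m+2}_c(\R^N)$, Lemma~\ref{existence} (applied with $\Omega=\R^N$, or with any large ball containing $D$) yields
\[
\cE_s(f,\varphi)=\int_{\R^N}(-\Delta)^s f(x)\,\varphi(x)\ dx\qquad\text{for all }\varphi\in H^s(\R^N).
\]
Here one should note that Lemma~\ref{existence} is stated for $v\in\cH^s_0(\Omega)$, but taking $\Omega=\R^N$ (or arguing by density / using Proposition~\ref{justification}, noting $(-\Delta)^s f\in L^2$ because $f\in\cS\subset H^{2s}$) covers all $\varphi\in H^s(\R^N)$. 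Then for nonnegative $\varphi$,
\[
\cE_s(f,\varphi)\leq \|(-\Delta)^s f\|_{L^\infty(\R^N)}\int_{\R^N}\varphi(y)\ dy\leq C\int_{\R^N}\varphi(y)\ dy,
\]
with the same $C$ as above (or its maximum with $C_1\|f\|_{2m+2,s}$), which is exactly the first assertion.

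I do not anticipate a genuine obstacle here: the corollary is essentially a packaging of Lemma~\ref{boundedevaluationexistence} (which does the real work, namely the weighted pointwise bound on $(-\Delta)^s f$) with the representation identity of Lemma~\ref{existence}. The only mild subtlety worth a line in the write-up is justifying the pairing $\cE_s(f,\varphi)=\int (-\Delta)^s f\,\varphi$ for \emph{all} $\varphi\in H^s(\R^N)$ rather than only compactly supported ones: this follows since $(-\Delta)^s f\in L^2(\R^N)$ (as $f\in\cS$) and both sides are continuous in $\varphi$ with respect to the $H^s$-norm, so one extends by density from $C^{s+\epsilon}_c(\R^N)\subset\cH^s_0(\R^N)$, or simply invokes the last identity in Proposition~\ref{justification}.
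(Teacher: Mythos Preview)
Your proposal is correct and follows essentially the same route as the paper: use Lemma~\ref{existence} (equivalently the last identity in Proposition~\ref{justification}) to write $\cE_s(f,\varphi)=\int_{\R^N}(-\Delta)^s f\,\varphi$, then bound $\|(-\Delta)^s f\|_{L^\infty}$ via Lemma~\ref{boundedevaluationexistence}. One small slip: you justify $(-\Delta)^s f\in L^2$ by writing ``$f\in\cS$'', but $f$ is only assumed $C^{2m+2}_c$, not smooth; the correct (and equally easy) reason is $C^{2m+2}_c(\R^N)\subset H^{2m+2}(\R^N)\subset H^{2s}(\R^N)$, which is exactly what Proposition~\ref{justification} needs.
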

\begin{proof}
Note that by Lemma \ref{existence} we have $\cE_{s}( f, \varphi)=\int_{\R^N}(-\Delta)^{s} f(x)\varphi(x)\ dx$. Moreover, since $f\in C^{2m+2}_c(\R^N)$ we have $(-\Delta)^m f\in C^{2}_c(\R^N)$ and thus there is $C>0$ such that (see \emph{\emph{e.g.}} \cite{S07} or using Lemma \ref{boundedevaluationexistence}) $\|(-\Delta)^{s} f\|_{L^{\infty}(\R^N)}\leq C.$ Hence $\cE_{s}( f,\varphi)\leq C\int_{\R^N}\varphi(y)\ dy$ as claimed.
\end{proof}

\begin{lemma}\label{boundedevaluation2}
 Let $U,D\subset \R^N$ open sets with Lipschitz boundary and $\dist(U,D)>0$, $\phi\in \cH^{s}_{0}(U)$, and $g\in \cH^{s}_{0}(D)$. Then there is $C=C(N,m,\sigma)>0$ such that
\[
 \cE_{s}(g,\varphi)=(-1)^{m+1}C\int_{U}\int_{D}\frac{\varphi(x)g(y)}{|x-y|^{N+2s}}\ dxdy.
\]
\end{lemma}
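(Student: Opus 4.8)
The plan is to reduce the claim to an explicit computation of $\cE_s$ on functions with disjoint supports, for which the nonlocal kernel representation of the bilinear form is well suited. First I would use the kernel formulas derived in the proof of Proposition \ref{justification}: for $m$ even we have
\[
\cE_s(g,\varphi)=\frac{c_{N,\sigma}}{2}\int_{\R^N}\int_{\R^N}\frac{(\Delta^{\frac m2}g(x)-\Delta^{\frac m2}g(y))(\Delta^{\frac m2}\varphi(x)-\Delta^{\frac m2}\varphi(y))}{|x-y|^{N+2\sigma}}\,dxdy,
\]
and the analogous formula with $\nabla\Delta^{\frac{m-1}{2}}$ for $m$ odd. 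Since $\dist(U,D)>0$ and $g\in\cH^s_0(D)$, $\varphi\in\cH^s_0(U)$, the functions $g$ and $\varphi$ (hence all their derivatives in the distributional sense) have disjoint supports, so the cross terms $\Delta^{\frac m2}g(x)\Delta^{\frac m2}\varphi(x)$ and $\Delta^{\frac m2}g(y)\Delta^{\frac m2}\varphi(y)$ vanish in the double integral, leaving only the two mixed terms, which by symmetry of the kernel combine to
\[
\cE_s(g,\varphi)=-c_{N,\sigma}\int_{\R^N}\int_{\R^N}\frac{\Delta^{\frac m2}g(y)\,\Delta^{\frac m2}\varphi(x)}{|x-y|^{N+2\sigma}}\,dxdy
\]
when $m$ is even (and similarly with the gradient terms when $m$ is odd, giving a factor $\nabla\Delta^{\frac{m-1}{2}}g(y)\cdot\nabla\Delta^{\frac{m-1}{2}}\varphi(x)$).

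Next I would transfer the Laplacian powers and gradients off $g$ and $\varphi$ and onto the kernel by integration by parts. The key point is that $x\mapsto|x-y|^{-N-2\sigma}$ is smooth away from $x=y$, and on the supports in question we always have $x\neq y$, so no boundary or principal-value issues arise: one may freely move $\Delta^{\frac m2}$ (or $\div$ and $\nabla$) from the functions onto the kernel. Using the identity $\Delta^{\frac m2}_x\Delta^{\frac m2}_x|x-y|^{-N-2\sigma}=\Delta^m_x|x-y|^{-N-2\sigma}$ in the even case, and $\div_x\nabla_x\cdots=\Delta^m_x|x-y|^{-N-2\sigma}$ after pairing the two gradients in the odd case, one reduces both cases to
\[
\cE_s(g,\varphi)=-c_{N,\sigma}(-1)^{?}\int_U\int_D \big(\Delta^m_x|x-y|^{-N-2\sigma}\big)\,\varphi(x)\,g(y)\,dxdy,
\]
up to tracking the sign $(-1)^m$ produced by the $m$ integrations by parts. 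Then the final ingredient is the classical pointwise identity
\[
\Delta^m_x |x-y|^{-N-2\sigma}=\gamma_{N,m,\sigma}\,|x-y|^{-N-2\sigma-2m}=\gamma_{N,m,\sigma}\,|x-y|^{-N-2s},
\]
valid for $x\neq y$, where $\gamma_{N,m,\sigma}$ is an explicit nonzero constant; this follows by iterating $\Delta|x-y|^{-\beta}=\beta(\beta+2-N)|x-y|^{-\beta-2}$. Collecting the constants into a single $C=C(N,m,\sigma)>0$ and combining the sign $(-1)^m$ from integration by parts with the overall minus sign from the kernel computation yields the factor $(-1)^{m+1}$, and one must check that $\gamma_{N,m,\sigma}$ has a definite sign so that $C>0$; in fact each factor $\beta(\beta+2-N)$ with $\beta=N+2\sigma+2j$ is positive since $\beta>N$, so $\gamma_{N,m,\sigma}>0$ and the sign bookkeeping is consistent.

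The main obstacle I anticipate is the careful justification of the integrations by parts and the disjoint-support cancellation at the level of $\cH^s_0$ functions rather than smooth compactly supported ones: a priori $g$ and $\varphi$ need not be smooth, and $\Delta^{\frac m2}g$, $\nabla\Delta^{\frac{m-1}{2}}g$ are only $L^2$ objects, so the manipulations should be read distributionally or else obtained by density from $C^\infty_c$ approximations together with the continuity of $\cE_s$ in the $\cH^s_0$ norm (Proposition \ref{l2}) and of the right-hand side integral (whose kernel is bounded by $\dist(U,D)^{-N-2s}$ on $U\times D$). A clean way to handle this is to first prove the identity for $g\in C^\infty_c(D)$, $\varphi\in C^\infty_c(U)$ — where all the above steps are rigorous — and then pass to the limit on both sides using that $C^{s+\epsilon}_c$ is dense in $\cH^s_0$ (Lemma \ref{lem:smooth-to-weak}) and that both sides are continuous bilinear forms on $\cH^s_0(D)\times\cH^s_0(U)$ under the standing distance hypothesis. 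The odd-$m$ case requires slightly more care in pairing the two gradient factors before integrating by parts, but is otherwise identical.
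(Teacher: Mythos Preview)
Your approach is essentially the same as the paper's: expand $\cE_s$ via its kernel, drop the diagonal terms by disjoint supports, integrate by parts to move the derivatives onto the kernel, and iterate $\Delta|z|^{-\beta}=\beta(\beta+2-N)|z|^{-\beta-2}$ to extract the constant and the sign $(-1)^{m+1}$. The paper applies Green's formula directly to $\cH^s_0$ functions without making the density step explicit; your plan to first argue for $C^\infty_c$ and then pass to the limit is cleaner, but note that Lemma~\ref{lem:smooth-to-weak} only gives containment, not density---for the latter you should invoke the Lipschitz boundary hypothesis and the standard fact (see Remark~\ref{weaksolution}, citing \cite{G11}) that $C^\infty_c(\Omega)$ is dense in $\cH^s_0(\Omega)$ when $\partial\Omega$ is continuous.
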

\begin{proof}
 Let $g,\phi$ be as stated. If $m$ is even, we have using Green's formula 
\begin{align*}
 \cE_{s}&(g,\varphi)=-\frac{c_{N,\sigma}}{2}\int_{U}\int_{D}\frac{(-\Delta)^{\frac{m}{2}} \phi(x)(-\Delta)^{\frac{m}{2}} g(y)}{|x-y|^{N+2\sigma}}\ dydx\\
&=-\frac{c_{N,\sigma}}{2}\int_{U} \phi(x)\int_{D}(-\Delta)^{\frac{m}{2}}g(y) (-\Delta)^{\frac{m}{2}}_{x}{|x-y|^{-N-2\sigma}}\ dy dx\\
&= -\frac{c_{N,\sigma}}{2}\int_{U} \phi(x) \int_{D} g(y)(-\Delta)^{\frac{m}{2}}_{y}(-\Delta)^{\frac{m}{2}}_{x}{|x-y|^{-N-2\sigma}}\ dy dx\\
&= -\frac{c_{N,\sigma}}{2}\int_{U} \phi(x) \int_{D} g(y)(-\Delta)^{m}_{y}{|x-y|^{-N-2\sigma}}\ dy dx,
\end{align*}
where we used $(-\Delta)^{\frac{m}{2}}_{y}|x-y|^{-N-2\sigma}=(-\Delta)^{\frac{m}{2}}_{x}|x-y|^{-N-2\sigma}$.\\
If $m$ is odd we have by integration by parts
\begin{align*}
 \cE_{s}&(g,\varphi)=-\frac{c_{N,\sigma}}{2}\int_{U}\int_{D}\frac{\nabla(-\Delta)^{\frac{m-1}{2}} \phi(x)\nabla (-\Delta)^{\frac{m-1}{2}}g(y)}{|x-y|^{N+2\sigma}}\ dydx\\
&=\frac{c_{N,\sigma}}{2}\int_{U} (-\Delta)^{\frac{m-1}{2}}\phi(x)\int_{D}\nabla (-\Delta)^{\frac{m-1}{2}}g(y) \nabla_{x}{|x-y|^{-N-2\sigma}}\ dy dx\\
&=-\frac{c_{N,\sigma}}{2} \int_{U}(-\Delta)^{\frac{m-1}{2}} \phi(x) \int_{D}\nabla (-\Delta)^{\frac{m-1}{2}}g(y)\nabla_{y}{|x-y|^{-N-2\sigma}}\ dy dx\\
&=-\frac{c_{N,\sigma}}{2} \int_{U}(-\Delta)^{\frac{m-1}{2}} \phi(x) \int_{D}(-\Delta)^{\frac{m-1}{2}} g(y)(-\Delta_{y}){|x-y|^{-N-2\sigma}}\ dy dx\\
&=-\frac{c_{N,\sigma}}{2}\int_{U} \phi(x) \int_{D} g(y)(-\Delta_{y})^{m}{|x-y|^{-N-2\sigma}}\ dy dx,
\end{align*}
where the last step follows as in the case $m$ even. Hence to finish the proof, note that for $x\in U$, $y\in D$ and $k>0$ we have $(-\Delta)_{y} |y-x|^{-k}\ dy=k(N-k-2)|y-x|^{-k-2},$ which gives
\begin{align*}
(-\Delta)^m_{y} {|y-x|^{-N-2\sigma}} &=-(N+2\sigma)(2\sigma+2) (-\Delta)_y^{m-1}{|y-x|^{-N-2\sigma-2}}\\
&=(-1)^{m}\prod\limits_{i=0}^{m-1}(N+2\sigma+2i)(2\sigma+2(i+1)){|y-x|^{-N-2\sigma-2m}}.
\end{align*}
\end{proof}

\begin{proof}[Proof of Corollary \ref{smoothgoutside}]
Let $\varphi\in \cH^s_0(\R^N\setminus \Omega)\backslash\{0\}$ be nonnegative. Then, by Lemmas \ref{existence} and \ref{boundedevaluation2},
\begin{align*}
(-1)^{m+1}\int_{\R^N\setminus \Omega} (-\Delta)^sg(x)\, \varphi(x)\ dx=(-1)^{m+1}\cE_s(g,\varphi)= C\int_{\Omega}\int_{\R^N\setminus \Omega}\frac{\varphi(x)g(y)}{|x-y|^{N+2s}}\ dx\, dy > 0.
\end{align*}
Since $\varphi$ is arbitrarily chosen, we obtain that $(-1)^{m+1}(-\Delta)^sg>0$ in $\R^N\setminus \overline{\Omega}$.
\end{proof}

%

\section{Counterexample to general maximum principles}\label{ce:sec}
Using the calculations in \cite[Table 3, p. 549]{D12} (see also \cite[Lemma 2.2]{RS15}, \cite[Corollary 9]{DG2016}, or Remark \ref{reg-it-arg} below) we have the following.

\begin{cor}\label{sol:ball}
 Let $r>0$, $x_0\in \R^N$, $s=m+\sigma$ with $m\in\mathbb N_0$ and $\sigma\in(0,1]$. Then the unique weak solution $\psi_{r,x_0}\in \cH^{s}_0(B_r(x_0))$ of $(-\Delta)^{s}\psi_{r,x_0}=1$ in $B_r(x_0)$ and $\psi_{r,x_0}=0$ on $\R^N\setminus B_{r}(x_0)$ is given for $x\in B_r(x_0)$ by 
\[
 \psi_{r,x_0}(x)=\left\{\begin{aligned} &\gamma_{N,s}(r^2-|x-x_0|^2)^{s},&& \text{ if }\quad  |x-x_0|<r, \\
                                                      &0, && \text{ if } \quad |x-x_0|\geq r,
                                                     \end{aligned}\right. \quad \text{ where }\ \gamma_{N,s}=\frac{\Gamma(\frac{N}{2})4^{-s}}{\Gamma(s+1)\Gamma(\frac{N}{2}+s)}.
\]
\end{cor}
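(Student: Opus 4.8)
The plan is to reduce to the unit ball by scaling, to identify $(1-|x|^2)_+^s$ as a pointwise solution via a known computation, to verify that it lies in the energy space $\cH^s_0(B)$, and then to conclude by the variational uniqueness of Corollary~\ref{unique:weak}.

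First I would record the reduction. The operator $(-\Delta)^s$ is translation invariant and homogeneous of degree $2s$ under dilations; in variational terms $\cE_s(f(\cdot/r),g(\cdot/r))=r^{N-2s}\cE_s(f,g)$ for all $f,g$, directly from the Fourier characterization in Proposition~\ref{justification}. Hence, if $\psi_{1,0}\in\cH^s_0(B)$ denotes the weak solution of $(-\Delta)^s\psi=1$ in $B$, then $x\mapsto r^{2s}\psi_{1,0}((x-x_0)/r)$ is the weak solution of $(-\Delta)^s\psi=1$ in $B_r(x_0)$, and an elementary substitution shows this equals $\gamma_{N,s}(r^2-|x-x_0|^2)_+^s$. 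So it suffices to prove $\psi_{1,0}(x)=\gamma_{N,s}(1-|x|^2)_+^s$.

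Set $u(x):=(1-|x|^2)_+^s$. By the calculation in \cite[Table~3]{D12} (see also \cite[Lemma~2.2]{RS15}, \cite[Corollary~9]{DG2016}, and Remark~\ref{reg-it-arg}) one has the \emph{pointwise} identity $(-\Delta)^s u=\gamma_{N,s}^{-1}$ in $B$, where $(-\Delta)^s u$ is read as $(-\Delta)^\sigma(-\Delta)^m u$ with $(-\Delta)^m$ the classical polyharmonic operator and $(-\Delta)^\sigma$ as in \eqref{fraclaplace}, which is legitimate since $u\in C^\infty(B)$; for $\sigma=1$ this is just the observation that $\Delta^{m+1}$ of the polynomial $(1-|x|^2)^{m+1}$ is constant. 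Next I would check $u\in\cH^s_0(B)$: since $u\equiv 0$ outside $B$, $u\in C^\infty(B)$, and $u$ vanishes like $d(x)^s$ at $\partial B$, this is immediate when $s\in\N$ (then $u$ is a compactly supported $W^{s,\infty}$ function), and for $s\notin\N$ it follows from the interpolation argument recalled in the introduction (see \cite{T78,Lototsky} and Remark~\ref{reg-it-arg}). Finally, for $\varphi\in C^\infty_c(B)$ the Fourier characterization of Proposition~\ref{justification} together with Parseval give $\cE_s(u,\varphi)=\langle u,(-\Delta)^s\varphi\rangle_{L^2}=\langle(-\Delta)^s u,\varphi\rangle$, where $(-\Delta)^s u\in\cS'$ is defined via the Fourier transform (well defined since $u\in L^2(\R^N)$); because $u$ is smooth in $B$, the restriction to $B$ of the distribution $(-\Delta)^s u$ agrees with its pointwise value, so $\cE_s(u,\varphi)=\gamma_{N,s}^{-1}\int_B\varphi$. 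By density of $C^\infty_c(B)$ in $\cH^s_0(B)$ this holds for every $\varphi\in\cH^s_0(B)$, i.e.\ $\gamma_{N,s}u$ is a weak solution of $(-\Delta)^s\psi=1$ in $B$; uniqueness from Corollary~\ref{unique:weak} then yields $\psi_{1,0}=\gamma_{N,s}u$, and scaling back finishes the proof.

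The main obstacle is not the algebra but the functional-analytic bookkeeping hidden in the last two steps: for $s\notin\N$ the solution is not in $H^{2s}$ (for instance $(1-|x|^2)^{s-2}\notin L^2(B)$ when $s\in(1,\frac{3}{2})$), so one cannot simply invoke Parseval on $u$ itself, and one must genuinely use interpolation theory to place $u$ in $\cH^s_0(B)$ and to justify that the classically computed constant $\gamma_{N,s}^{-1}$ is the correct right-hand side in the weak formulation. Everything else — the scaling reduction, the pointwise computation, and the uniqueness — is either routine or directly quotable.
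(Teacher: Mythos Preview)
Your proposal is correct and follows the same approach the paper itself takes: the paper does not give a detailed proof but simply cites \cite[Table~3]{D12}, \cite[Lemma~2.2]{RS15}, \cite[Corollary~9]{DG2016} for the pointwise identity and forwards to Remark~\ref{reg-it-arg} for the membership $u\in\cH^s_0(B)$; you have spelled out the implicit steps (scaling, pointwise identity, energy-space membership via interpolation, density, and uniqueness via Corollary~\ref{unique:weak}). One small remark: for the passage from the pointwise value of $(-\Delta)^s u$ to the weak formulation, the paper's own tool is Lemma~\ref{ibyp} (equation~\eqref{ibyp:eq:2}), which uses the ordering $(-\Delta)^m(-\Delta)^\sigma$ rather than $(-\Delta)^\sigma(-\Delta)^m$; either route works here, but invoking Lemma~\ref{ibyp} makes the justification of ``the distribution agrees with its pointwise value in $B$'' immediate rather than relying on an additional regularity argument.
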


We are now ready to construct the counterexample.
\begin{proof}[Proof of Theorem \ref{main:thm}]
Let  $m\in \N$ be odd, $\sigma\in(0,1)$, $s:=m+\sigma$, $D\subset \mathbb  R^N$ be an open set such that $\R^N\setminus D$ has nonempty interior, $A$ be an open ball compactly contained in the interior of $\R^N\setminus D$. Let $g\in C^{\infty}_c(D)\backslash\{0\}$ be a nonnegative function and let $\psi\in\cH^{s}_0(A)$ be the weak solution given by Corollary \ref{sol:ball}, in particular
$\psi \geq 0$ in $\R^N$ and $\cE_{s} (\psi,\phi)=\int_A \phi\ dx$ for all $\phi \in \cH^{s}_0(A).$

Let $C=C(N,m,\sigma)>0$ be the constant given by Lemma \ref{boundedevaluation2} and let 
\begin{align}
f(x)&:=\ \left\lbrace\begin{aligned}
& a-C\int_D g(y)|x-y|^{-N-2s}\ dy & &\quad  \text{ for }\ x\in A, \\
& aC\int_A \psi(y)|x-y|^{-N-2s}\ dy-(-\Delta)^s g(x) & &\quad \text{ for }\ x\in D,
\end{aligned}\right.
\end{align}
where $a>0$ is chosen large enough such that $f>0$ in $\overline{\Omega}$ where $\Omega:=D\cup {A}$, which is possible by Corollary \ref{boundedevaluation} and because $\operatorname{dist}(D,A)>0$. Let $u(x):=a\psi(x)-g(x)$ for $x\in \R^N$. Clearly $u\in\cH_0^{s}(\Omega)\cap C(\R^N)\cap C^{\infty}(\Omega)$.  

We now show that $u$ is a sign-changing weak solution of 
\begin{align}\label{eq:cex}
(-\Delta)^{s} u=f\geq 0\quad\text{ in }\Omega,\qquad u=0\quad \text{ on }\R^N\setminus\Omega. 
\end{align}
Let $\varphi\in \cH^{s}_0(\Omega)$ with $\varphi\geq 0$. Then $\varphi=\varphi_D+\varphi_A$ for some nonnegative $\varphi_D\in \cH^{s}_0(D)$ and $\varphi_A\in \cH^{s}_0(A)$.  Since $m$ is odd we have
\begin{align*}
 \cE_{s} (u,\phi_D)&=  a\, \cE_{s} (\psi,\phi_D) -\cE_{s} (g,\phi_D)=a\, C\int_{D} \int_{A} \frac{\phi_D(x)\psi(y)}{|x-y|^{N+2s}}\ dy\,dx-\int_{D}(-\Delta)^s g\ \phi_D\ dx,
\end{align*}
by Lemma \ref{boundedevaluation2} and Remark \ref{commutation}. Thus $\cE_{s} (u,\phi_D)=\int_{D}f(x)\phi_D(x)\ dx.$ Analogously,
\begin{align*}
\cE_{s} (u,\phi_A)&=  a\, \cE_{s} (\psi,\phi_A)-\cE_{s} (g,\phi_A)= a \int_{A} \phi_A\ dx-C\int_{A} \int_{D} \frac{\phi_A(x)g(y)}{|x-y|^{N+2s}}\ dy\,dx,
\end{align*}
which yields that $\cE_{s} (u,\phi_A)=\int_{A}f(x)\phi_A(x)\ dx$.  Therefore $\cE_{s} (u,\phi)= \cE_{s} (f,\phi)$ for all $\varphi\in \cH^{s}_0(\Omega)$ and $u$ is a sign-changing weak solution of \eqref{eq:cex} as claimed.
\end{proof}

\begin{remark}\label{negative_interaction}
If $u\in H^{s}(\R^N)$ and $s\in (0,\frac{3}{2})$ then $u^{\pm}\in H^s(\R^N)$, by \cite[Th\'{e}or\`{e}me 1]{M89}. Hence $\cE_s(|u|,|u|)=\cE_s(u,u)+4\cE_s(u^+,u^-),$ where $|\cE_s(u^+,u^-)|<\infty.$ Note that
\[
\cE_s(u^+,u^-)=\left\{\begin{aligned} &- \int_{\R^N}\int_{\R^N} \frac{u^+(x) u^-(y)}{|x-y|^{N+2s}}\ dxdy && \text{ for $s\in(0,1)$,}\\
&0&&\text{ for $s=1$.}\end{aligned}\right.
\]
Therefore, $\cE_s(|u|,|u|)\leq \cE_s(u,u)$ for all $u\in H^s(\R^N)$, $s\in(0,1]$. This fact seems to be crucial for a classical proof of the weak maximum principle. In the case $s\in(1,\frac{3}{2})$ we have
\begin{align*}
  \cE_s(u^+,u^-)= -\int_{\R^N}\int_{\R^N}\frac{\nabla u^+(x)\cdot \nabla u^-(y)}{|x-y|^{N+2s}}\ dxdy.
\end{align*}
Note that Lemma \ref{boundedevaluation2} suggests that $\cE_s(u^+,u^-)$ is nonnegative and, in particular, if $u\not\equiv|u|$ in $\R^N$ then $\cE_s(|u|,|u|)>\cE_s(u,u)>0$. However, a proof of this fact is still missing.
\end{remark}

\section{The fundamental solution in the whole space} \label{sec:greenfund}

In this section we provide an explicit expression for a fundamental solution of $(-\Delta)^s$ in the whole space $\R^N$. We begin by introducing a weaker notion of solution, {\it i.e.,} solutions in the sense of distributions.

Given $s>0$ we denote (see \emph{\emph{e.g.}} \cite{FW16,S07} for $s\in(0,1)$) 
\begin{align*}
 \cL^1_{s}:=\left\{u\in L^1_{loc}(\R^N)\;:\; \|u\|_{\cL^1_{s}}<\infty \right\}, \qquad \|u\|_{\cL^1_{s}}:=\int_{\R^N}\frac{|u(x)|}{1+|x|^{N+2s}}\ dx.
\end{align*}
\begin{remark}\label{temp:dis}\hspace{1em}
\begin{enumerate}
\item Note that $L^p(\R^N)\subset \cL^1_s\subset \cL^1_{s'}$ for all $0 < s\leq s'$ and $p\in[1,\infty]$.
\item If $u\in \cL^1_{s}$ we can identify $(-\Delta)^su$ with a \emph{tempered distribution} in $\cS'$ satisfying that $\pair{(-\Delta)^{s} u}{\phi}=\int_{\R^N}u(x)(-\Delta)^{s}\phi(x)\ dx$ for all $\phi\in \cS,$ by Lemma \ref{boundedevaluationexistence}.  In particular this also yields that $(-\Delta)^su$ is a \emph{distribution} in $\cD':=(C^{\infty}_c(\R^N))'$ and motivates the following notion of solution.
\end{enumerate} 
\end{remark}

\begin{defi}\label{distributionaldefi}
Let $s>0$, $\Omega\subset \R^N$ open and $f\in \cD'$. A function $u \in \cL^1_{s}$ is called a \textit{distributional solution} of \eqref{wsol:def1} if $u\equiv0$ on $\R^N\setminus \Omega$ and
\begin{equation}\label{dist-sense}
\langle (-\Delta)^s u,\phi\rangle=\langle f,\phi\rangle \qquad\text{ for all }\phi\in C^{\infty}_c(\Omega).
 \end{equation}
A function $u\in \cL^{1}_s$ is called \textit{fundamental solution for $(-\Delta)^s$}, if $(-\Delta)^su=\delta_0$ in $\R^N$ in the sense of distributions, i.e. (\ref{dist-sense}) holds with $f=\delta_0$.
\end{defi}

\begin{defi}\label{s-harmonic-defi}
Let $s>0$, $\Omega\subset \R^N$ open. A function $u\in \cL^1_s$ is called \textit{$s$-harmonic in $\Omega$}, if it satisfies $\langle (-\Delta)^s u,\phi\rangle=0$ for all $\phi\in C^{\infty}_c(\Omega).$
\end{defi}

\begin{remark}\label{0solution}\hspace{1em}
 If $u$ is a fundamental solution, then for any $y\in \R^N$ we have $(-\Delta)^s u(\cdot-y)=\delta_y$ in $\R^N$ in the sense of distributions.
\end{remark}

\begin{remark}\label{weaksolution}
If $\Omega\subset \R^N$ has a continuous boundary, then $C^{\infty}_c(\Omega)$ is dense in $\cH^s_0(\Omega)$ (see \emph{\emph{e.g.}} \cite[Theorem 1.4.2.2]{G11}. 
Therefore, if $u\in \cH^{s}_0(\Omega)$ is a distributional solution of \eqref{wsol:def1} and $\partial\Omega$ is continuous, then, by Lemma \ref{existence}, $u$ is a weak solution, see \eqref{wsol:def}.
This holds in particular if $\Omega=\R^N$ since in this case $\cH^{s}_0(\R^N)=H^{s}(\R^N)$.
\end{remark}

\begin{defi}\label{FNs:def}
 For $s>0$, $N\in \N$, and $x\in \R^N\setminus\{0\}$, define
\begin{align*}
 F_{N,s}(x):=\left\{\begin{aligned}
               &\kappa_{N,s} |x|^{2s-N}, && \quad\text{ if $s-\frac{N}{2}\not\in\N_0$;}\\
&\kappa_{N,s} |x|^{2s-N}\ln|x|, && \quad\text{ if $s-\frac{N}{2}\in\N_0$,}
              \end{aligned}
\right.
\end{align*}
where 
\begin{align*}
\kappa_{N,s}:=\left\{\begin{aligned}
                              &\frac{\Gamma(\frac{N}{2}-s)}{4^{s}\pi^{\frac{N}{2}}\Gamma(s)}, && \quad\text{ if $s-\frac{N}{2}\not\in\N_0$;}\\
&\frac{2^{1-2s}\pi^{-\frac{N}{2}}(-1)^{s+1-\frac{N}{2}}}{\Gamma(s-\frac{N}{2}+1)\Gamma(s)}, && \quad\text{ if $s-\frac{N}{2}\in\N_0$.}
              \end{aligned}
\right.
\end{align*}
\end{defi}

In the following we show that $F_{N,s}$ is a fundamental solution for $(-\Delta)^s$ for all $s>0$.
\begin{remark}
The fact that $F_{N,s}$ is a fundamental solution for $(-\Delta)^s$ with $s>0$ is known, see \cite{GGS10,B16,SKM93}.  The proof we present below is new and relies on induction and recurrence formulas.
\end{remark}

\begin{lemma}\label{rightspacefns}
For all $s>0$ and $N\in \N$ we have $F_{N,s}\in \cL^1_s$.
\end{lemma}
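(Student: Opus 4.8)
The plan is to show directly that the singularity of $F_{N,s}$ at the origin and its polynomial growth at infinity are both mild enough to make $F_{N,s}$ locally integrable with the weight $(1+|x|^{N+2s})^{-1}$. I would split the integral $\|F_{N,s}\|_{\cL^1_s}=\int_{\R^N}\frac{|F_{N,s}(x)|}{1+|x|^{N+2s}}\,dx$ into the region near the origin $B=B_1(0)$ and the region $\R^N\setminus B$, and estimate each piece separately. On $B$ the weight $(1+|x|^{N+2s})^{-1}$ is bounded below and above by positive constants, so it suffices to check that $F_{N,s}\in L^1(B)$; on $\R^N\setminus B$ the term $|x|^{2s-N}$ (possibly times $\ln|x|$) is dominated by $|x|^{2s}$ up to a logarithmic factor, so the integrand decays like $|x|^{-N}$ times a logarithm, which is integrable at infinity.

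For the local part near the origin, in the generic case $s-\tfrac N2\notin\N_0$ we have $|F_{N,s}(x)|=\kappa_{N,s}|x|^{2s-N}$, and $\int_B |x|^{2s-N}\,dx<\infty$ precisely because the exponent satisfies $2s-N>-N$, i.e. $2s>0$, which holds for every $s>0$; passing to polar coordinates gives $\int_0^1 r^{2s-N}r^{N-1}\,dr=\int_0^1 r^{2s-1}\,dr<\infty$. In the exceptional case $s-\tfrac N2\in\N_0$ one has $2s-N=2(s-\tfrac N2)\in 2\N_0\geq 0$, so $|x|^{2s-N}\big|\ln|x|\big|$ is in fact bounded on $B$ (or at worst has an integrable logarithmic singularity when $s=N/2$), and integrability on $B$ is immediate. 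For the part at infinity, note $\frac{|x|^{2s-N}}{1+|x|^{N+2s}}\leq \frac{|x|^{2s-N}}{|x|^{N+2s}}=|x|^{-2N}$ for $|x|\geq 1$, and in polar coordinates $\int_1^\infty r^{-2N}r^{N-1}\,dr=\int_1^\infty r^{-N-1}\,dr<\infty$; the logarithmic factor in the exceptional case only changes $r^{-N-1}$ to $r^{-N-1}\ln r$, which is still integrable at infinity. Combining the two estimates yields $\|F_{N,s}\|_{\cL^1_s}<\infty$, i.e. $F_{N,s}\in\cL^1_s$.

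There is no real obstacle here; the only point requiring a moment of care is the borderline behavior of the logarithm when $s-\tfrac N2\in\N_0$, and in particular the case $N$ even with $s=N/2$, where $2s-N=0$ and the local factor is $|\ln|x||$ itself — but $|\ln|x||$ is locally integrable near $0$ in any dimension $N\geq 1$, so this causes no problem. I would simply remark that in all cases the local singularity is of the form $|x|^{-\beta}$ with $\beta<N$ (up to a logarithm), and the decay at infinity beats $|x|^{-N}$ (again up to a logarithm), which is exactly what local integrability against the weight requires.
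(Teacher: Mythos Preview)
Your argument is correct and essentially identical to the paper's: both split the integral over $B$ and $\R^N\setminus B$, use that $|x|^{2s-N}$ (times at most a logarithm) is locally integrable near the origin since $2s>0$, and bound the integrand at infinity by $|x|^{-2N}$ (times at most $\ln|x|$). The paper simply organizes the same estimates into three explicit cases ($2s<N$, $2s\geq N$ with $s-\tfrac N2\notin\N_0$, and $s-\tfrac N2\in\N_0$) rather than grouping by region first.
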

\begin{proof}
The claim follows directly from the following estimates.
\begin{align*}
\int_{\R^N}\frac{|x|^{2s-N}}{1+|x|^{2s+N}}\ dx&\leq \int_{B}|x|^{2s-N}\ dx+ \int_{\R^N\setminus B} |x|^{-2s-N}\ dx<\infty,\quad \text{ if }2s<N;\\
\int_{\R^N}\frac{|x|^{2s-N}}{1+|x|^{2s+N}}\ dx&\leq \int_{B}|x|^{2s-N}\ dx+ \int_{\R^N\setminus B} |x|^{-2N}\ dx<\infty,\quad \text{ if }2s\geq N \text{ and }s-\frac{N}{2}\notin \N_0;\\
\int_{\R^N}\frac{|\ln|x|| |x|^{2s-N}}{1+|x|^{2s+N}}\ dx&\leq \int_{B} -\ln|x|\ dx+ \int_{\R^N\setminus B}\frac{\ln|x|}{|x|^{2N}} \ dx<\infty,\quad \text{ if }\text{ $2s\geq N$ and $s-\frac{N}{2}\in \N_0$.}
\end{align*}
\end{proof}

\begin{lemma}\label{Lap:FS}
Let $s>1$. Then \ $-\Delta F_{N,s}=F_{N,s-1}+R_s$ \ in the sense of distributions, where $R_s$ is an $(s-1)$-harmonic polynomial.
\end{lemma}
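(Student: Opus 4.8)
The claim is that $-\Delta F_{N,s} = F_{N,s-1} + R_s$ in $\mathcal D'$, with $R_s$ an $(s-1)$-harmonic polynomial. The natural approach is a direct (distributional) computation of the Laplacian of the radial profile $|x|^{2s-N}$ (and of $|x|^{2s-N}\ln|x|$ in the resonant cases), keeping careful track of the normalizing constants $\kappa_{N,s}$. First I would recall that, away from the origin, $F_{N,s}$ is smooth and one can apply the classical formula $\Delta (|x|^\beta) = \beta(\beta+N-2)|x|^{\beta-2}$ for $\beta = 2s-N$, which gives $\Delta(|x|^{2s-N}) = (2s-N)(2s-2)|x|^{2s-N-2} = (2s-N)(2s-2)|x|^{2(s-1)-N}$ pointwise on $\R^N\setminus\{0\}$; multiplying through by $-\kappa_{N,s}$ and comparing with $-\kappa_{N,s-1}|x|^{2(s-1)-N}$ shows that the pointwise identity $-\Delta F_{N,s} = F_{N,s-1}$ holds on $\R^N\setminus\{0\}$ exactly because of the recurrence satisfied by the Gamma-function constants (this is a short but essential constant-chasing step; the $\Gamma$-recursion $\Gamma(z+1)=z\Gamma(z)$ produces the factors $(2s-N)$ and $(2s-2)$ after the $4^s/4^{s-1}$ cancellation). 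The same pointwise computation, done for $|x|^{2s-N}\ln|x|$ using $\Delta(|x|^\beta \ln|x|) = \beta(\beta+N-2)|x|^{\beta-2}\ln|x| + (2\beta+N-2)|x|^{\beta-2}$, handles the logarithmic (resonant) cases, and again the constants are arranged so that the $\ln$-term reproduces $F_{N,s-1}$ while the extra non-logarithmic term is a constant multiple of $|x|^{2(s-1)-N}$, which is itself a polynomial precisely when $s-\tfrac N2\in\N_0$ (so $2(s-1)-N$ is a nonnegative even integer).

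The second, and more delicate, half is to pass from the pointwise identity on $\R^N\setminus\{0\}$ to the distributional identity on all of $\R^N$, i.e.\ to show that the distribution $-\Delta F_{N,s} - F_{N,s-1}$, which is supported at $\{0\}$, is in fact a polynomial (equivalently, that no Dirac mass or derivative of a Dirac mass at the origin appears, or that any such term gets absorbed into $R_s$). Since $F_{N,s}$ and $F_{N,s-1}$ both lie in $\mathcal L^1_s \subset \mathcal L^1_{s-1}$ by Lemma~\ref{rightspacefns}, both define tempered distributions, so $T := -\Delta F_{N,s} - F_{N,s-1} \in \mathcal S'$ has support contained in $\{0\}$; a standard structure theorem then gives $T = \sum_{|\alpha|\le k} c_\alpha \partial^\alpha \delta_0$ for finitely many constants. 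I would rule out the singular contributions by a homogeneity / scaling argument: $|x|^{2s-N}$ is homogeneous of degree $2s-N$, hence $-\Delta F_{N,s}$ is homogeneous of degree $2s-N-2 = 2(s-1)-N$, the same degree as $F_{N,s-1}$, so $T$ is homogeneous of degree $2(s-1)-N > -N$ (as $s>1$); but $\partial^\alpha\delta_0$ is homogeneous of degree $-N-|\alpha| \le -N$, which is incompatible unless all $c_\alpha$ with $|\alpha|\ge 1$ vanish and the degree-$(-N)$ term ($c_0\delta_0$) also vanishes unless $2(s-1)-N=-N$, i.e.\ $s=1$, which is excluded. Hence $T\equiv 0$ in the non-resonant case. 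In the resonant case $s-\tfrac N2\in\N_0$ the logarithm spoils exact homogeneity, but one can use the quasi-homogeneity of $|x|^{2s-N}\ln|x|$ (its dilation by $\lambda$ adds $\ln\lambda\cdot|x|^{2s-N}$, a polynomial of the correct degree); tracking this through $\Delta$ shows again that $T$ can only be a polynomial of degree $2(s-1)-N$, which we fold into $R_s$.

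Finally I would verify the two auxiliary claims about $R_s$: that it is a \emph{polynomial} and that it is \emph{$(s-1)$-harmonic}. Polynomiality is automatic from the structure argument above together with the observation that in the resonant case $2(s-1)-N \in 2\N_0$; in the non-resonant case $R_s$ is simply $0$ (or, if one prefers to present it uniformly, the constant $0$). For $(s-1)$-harmonicity I would argue that $R_s$ is a constant multiple of $|x|^{2(s-1)-N}$ in the range where that is a polynomial, and that $(-\Delta)^{s-1}$ annihilates this function — this follows by iterating Lemma~\ref{Lap:FS} itself (or its integer-order analogue) down to $s-1 < \text{something}$, or directly because $(-\Delta)^{s-1}$ of any polynomial of degree $< 2(s-1)$ vanishes and here the degree is $2(s-1)-N < 2(s-1)$; one must be slightly careful about what $(-\Delta)^{s-1}$ of a polynomial means distributionally, which is handled via the Fourier characterization in Proposition~\ref{justification} / Remark~\ref{temp:dis}, noting $|\xi|^{2(s-1)}\widehat P = 0$ for such $P$ since $\widehat P$ is supported at the origin and $|\xi|^{2(s-1)}$ vanishes there to sufficiently high order. \textbf{The main obstacle} I anticipate is the bookkeeping in the resonant case: controlling the logarithmic terms and making sure the ``extra'' non-logarithmic piece that $\Delta$ generates is exactly a polynomial multiple that can be legitimately absorbed into an $(s-1)$-harmonic $R_s$, rather than producing a $\delta_0$-type singularity — this is where the precise value of $\kappa_{N,s}$ in Definition~\ref{FNs:def} (with its sign $(-1)^{s+1-N/2}$ and the shifted Gamma factors) does real work, and where the proof is least routine.
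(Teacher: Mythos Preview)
Your approach is essentially the same as the paper's: a direct pointwise computation of $-\Delta(|x|^{2s-N})$ and $-\Delta(|x|^{2s-N}\ln|x|)$ on $\R^N\setminus\{0\}$, followed by matching the $\Gamma$-constants, with $R_s\equiv 0$ in the non-resonant case and $R_s=C_2|x|^{2s-N-2}$ when $s-\tfrac N2\in\N$. Two minor differences are worth noting. First, your homogeneity argument to exclude $\delta_0$-type contributions is correct but more than the paper supplies: since $s>1$, both $F_{N,s}$ and its gradient are $L^1_{loc}$ (indeed $|x|^{2s-N-1}\in L^1_{loc}$), so the boundary terms on $\partial B_\varepsilon$ vanish as $\varepsilon\to 0$ and the pointwise identity on $\R^N\setminus\{0\}$ is automatically the distributional one; the paper simply asserts the pointwise computation and moves on. Second, for the $(s-1)$-harmonicity of $R_s$ the paper does not use Fourier but instead factors the operator as $(-\Delta)^{s-1}=(-\Delta)^{N/2}(-\Delta)^{(2s-N-2)/2}$ and observes (via Lemma~\ref{ibyp:RN}) that $(-\Delta)^{(2s-N-2)/2}|x|^{2s-N-2}$ is a constant, which $(-\Delta)^{N/2}$ then kills; your Fourier/degree argument reaches the same conclusion but you should be careful that pairing $\partial^\alpha\delta_0$ with $|\xi|^{2(s-1)}\widehat\phi$ is justified precisely because $|\alpha|\le 2(s-1)-N<2(s-1)$ forces all the relevant derivatives of $|\xi|^{2(s-1)}$ to vanish at the origin.
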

\begin{proof}
Let $s>1$ and $x\in \R^N\setminus\{0\}$. If $s-\frac{N}{2}\notin\N_0$ then
\begin{align*}
 -\Delta F_{N,s}(x)&=\kappa_{N,s}(2s-N)2(s-1)|x|^{2(s-1)-N}=F_{N,s-1}(x)
\end{align*}
and the claim follows with $R_s\equiv 0$.  If $s=\frac{N}{2}$, then
\[
-\Delta F_{N,\frac{N}{2}}(x)=-\kappa_{N,\frac{N}{2}}(N-2)|x|^{-2}=F_{N,\frac{N}{2}-1}(x).
\]
and the claim follows with $R_{\frac{N}{2}}\equiv 0$. Finally, if $s-\frac{N}{2}\in\N$, then
\begin{align*}
 -\Delta F_{N,s}(x)&=-\kappa_{N,s}(\Delta|x|^{2s-N}\ln|x|+2\nabla|x|^{2s-N}\nabla\ln|x|+|x|^{2s-N}\Delta\ln|x|)\\
 &=\kappa_{N,s-1}|x|^{2s-N-2}\ln|x|+C_2|x|^{2s-N-2}=F_{N,s-1}+C_2|x|^{2s-N-2},
\end{align*}
where $C_2=(2(N-2s)+(2-N))\kappa_{N,s}$.  The claim follows with $R_s(x):=C_2|x|^{2s-N-2}$, since
\begin{align*}
 (-\Delta)^{s-1}|x|^{2s-N-2}=(-\Delta)^{\frac{N}{2}}(-\Delta)^{\frac{2s-N-2}{2}}|x|^{2s-N-2}=(-\Delta)^{\frac{N}{2}}1=0
\end{align*}
in the sense of distributions, by Lemma \ref{ibyp:RN}.
\end{proof}

\begin{thm}\label{fundamentalrn}
Let $s>0$. Then $F_{N,s}$ is a fundamental solution for $(-\Delta)^s$.
\end{thm}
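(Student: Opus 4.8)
The plan is to prove the statement by induction on $\lceil s \rceil$, using Lemma~\ref{Lap:FS} as the recurrence step and the known cases $s\in(0,1]$ together with $s\in\N$ as the base. For the base case $s\in(0,1)$, the fact that $F_{N,s}$ is a fundamental solution is classical (e.g.\ \cite{B16}); for $s=1$ it is the standard Newtonian potential; and more generally for $s\in\N$ it is the classical polyharmonic fundamental solution (see \cite[Section 2.6]{GGS10}). These cover an interval of length one, so it suffices to show the inductive step: if $s>1$ and $F_{N,s-1}$ is a fundamental solution for $(-\Delta)^{s-1}$, then $F_{N,s}$ is a fundamental solution for $(-\Delta)^{s}$.

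For the inductive step, first I would recall from Lemma~\ref{rightspacefns} that $F_{N,s}\in\cL^1_s$, so that $(-\Delta)^s F_{N,s}$ is a well-defined tempered distribution and the pairing $\pair{(-\Delta)^s F_{N,s}}{\phi}=\int_{\R^N}F_{N,s}(x)(-\Delta)^s\phi(x)\ dx$ makes sense for $\phi\in\cS$ by Remark~\ref{temp:dis} and Lemma~\ref{boundedevaluationexistence}. The key computation is then to write, for $\phi\in C^\infty_c(\R^N)$,
\[
\pair{(-\Delta)^s F_{N,s}}{\phi}=\pair{(-\Delta)^{s-1}F_{N,s}}{-\Delta\phi}=\pair{F_{N,s}}{(-\Delta)^{s-1}(-\Delta\phi)}=\pair{F_{N,s}}{(-\Delta)^s\phi},
\]
and to use Lemma~\ref{Lap:FS}, which gives $-\Delta F_{N,s}=F_{N,s-1}+R_s$ with $R_s$ an $(s-1)$-harmonic polynomial. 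Applying $(-\Delta)^{s-1}$ to both sides in the distributional sense and using that $R_s$ is $(s-1)$-harmonic yields
\[
(-\Delta)^s F_{N,s}=(-\Delta)^{s-1}(-\Delta F_{N,s})=(-\Delta)^{s-1}F_{N,s-1}+(-\Delta)^{s-1}R_s=\delta_0+0=\delta_0,
\]
where the middle equality is the inductive hypothesis. The slightly delicate point is to justify that $(-\Delta)^{s-1}$ commutes with $-\Delta$ on $F_{N,s}$ as tempered distributions; this follows from the definition of the fractional Laplacian on $\cS'$ via the Fourier multiplier $|\xi|^{2s}$, since $|\xi|^{2s}=|\xi|^{2(s-1)}\cdot|\xi|^2$, combined with Lemma~\ref{boundedevaluationexistence} ensuring all pairings against Schwartz functions are finite. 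One must also check that $(-\Delta)^{s-1}R_s=0$ holds as a tempered distribution even though $R_s$ is an unbounded polynomial (not in $\cL^1_{s-1}$); for this I would invoke the argument already used at the end of the proof of Lemma~\ref{Lap:FS} via Lemma~\ref{ibyp:RN}, noting that $R_s$ is either identically zero or of the form $C|x|^{2s-N-2}$ with $2s-N-2$ an even nonnegative integer, hence a genuine polynomial annihilated by a sufficiently high integer power of $-\Delta$.

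The main obstacle, then, is not the algebraic identity but the careful bookkeeping of function spaces: ensuring that each intermediate object ($F_{N,s}$, $-\Delta F_{N,s}$, $R_s$, $(-\Delta)^{s-1}F_{N,s-1}$) lies in a class where the fractional Laplacian and its composition rules are valid as tempered distributions, and that no growth at infinity spoils the integration-by-parts steps. Once the commutation $(-\Delta)^{s}=(-\Delta)^{s-1}(-\Delta)$ on $\cS'$ is established for these specific radial functions — which is where Lemma~\ref{boundedevaluationexistence} and the Fourier-multiplier definition do the real work — the rest is a direct consequence of Lemma~\ref{Lap:FS} and the induction hypothesis.
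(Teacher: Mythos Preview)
Your proposal is correct and follows essentially the same inductive scheme as the paper: base case $s\in(0,1]$, then for $s>1$ combine Lemma~\ref{Lap:FS} with the induction hypothesis and the $(s-1)$-harmonicity of $R_s$. The paper's proof is in fact a two-line version of what you wrote, citing Lemma~\ref{Lap:FS}, Lemma~\ref{ibyp:RN}, and Remark~\ref{0solution} to obtain $\pair{(-\Delta)^s F_{N,s}}{\phi}=\pair{(-\Delta)^{s-1}F_{N,s-1}}{\phi}=\phi(0)$.

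One comment on your justification of the commutation step: invoking the Fourier multiplier $|\xi|^{2s}=|\xi|^{2(s-1)}|\xi|^2$ on $\cS'$ is not the route the paper takes, and it is delicate here because $|\xi|^{2s}$ is not smooth at the origin while $\widehat{F_{N,s}}$ is a distribution concentrated (in a singular way) there, so the product is not a priori defined. The paper sidesteps this entirely by working only with the duality definition of $(-\Delta)^s$ on $\cL^1_s$ (Remark~\ref{temp:dis}) and using Lemma~\ref{ibyp:RN} to move one classical Laplacian from the test function onto $F_{N,s}$; since you also invoke Lemma~\ref{ibyp:RN}, your argument is fine, but the Fourier detour should be dropped. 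Also, there is no need to treat $s\in\N$ as a separate base case: once $s\in(0,1]$ is known, the induction reaches every $s>0$.
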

\begin{proof}
We argue by induction on $s>0$. If $s\in(0,1]$ the claim is known, see \emph{e.g.} \cite[Chapter I]{L72}. 
Let $s>1$ and assume that $F_{N,s-1}$ is a fundamental solution for $(-\Delta)^{s-1}$. Then, by Lemma \ref{Lap:FS}, Lemma \ref{ibyp:RN} and Remark \ref{0solution}, $\pair{(-\Delta)^s F_{N,s}}{ \phi}=\pair{(-\Delta)^{s-1} F_{N,s-1}}{ \phi}=\pair{\delta_0}{\phi}$ for all $\phi\in C^{\infty}_c(\R^N),$ that is, $F_{N,s}$ is a fundamental solution for $(-\Delta)^s$.
\end{proof}

\subsection{Distributional solutions in the whole space}

Next we give some integral bounds for $F_{N,s}\ast f$ for suitable $f\in L^p(\R^N)$. Here, as usual, let $\ast$ denote convolution, that is for functions $u,v:\R^N\to\R$ we put $u\ast v(x):= \int_{\R^N}u(x-y)v(y)dy$ for $x\in\R^N$, whenever the right-hand side exists in a suitable sense.

\begin{lemma}\label{finiteconv}
Let $s>0$ with $2s\geq N$. If $f\in L^1(\R^N)$ has compact support, then $F_{N,s}\ast f\in \cL^1_{s}$.
\end{lemma}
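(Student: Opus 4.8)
The plan is to split $F_{N,s}\ast f$ into the contribution near the support of $f$, where the kernel $F_{N,s}$ itself is locally integrable, and the contribution far away, where $F_{N,s}$ grows at worst like $|x|^{2s-N}$ (times a logarithm in the exceptional case $s-\frac N2\in\N_0$) and we must check that convolving against $1/(1+|x|^{N+2s})$ still gives something finite. So I would fix $R>0$ with $\supp f\subset B_R(0)$, write $u:=F_{N,s}\ast f$, and estimate $\|u\|_{\cL^1_s}=\int_{\R^N}\frac{|u(x)|}{1+|x|^{N+2s}}\,dx$.

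First I would record the crude pointwise bound on the kernel: by Definition \ref{FNs:def} there is a constant $C=C(N,s)$ with $|F_{N,s}(z)|\le C|z|^{2s-N}$ for $|z|\le 2$ when $2s\ge N$ (the logarithm is bounded there when present, and $2s-N\ge0$ means no singularity at the origin unless $2s=N$, which is the borderline log case already covered by Lemma \ref{rightspacefns}), and $|F_{N,s}(z)|\le C|z|^{2s-N}\log(e+|z|)$ for all $z$. For $|x|\le 2R+1$, Fubini gives $\int_{|x|\le 2R+1}|u(x)|\,dx\le \|f\|_{L^1}\cdot\sup_{|x|\le 2R+1}\int_{B_R}|F_{N,s}(x-y)|\,dy$, and the inner integral is finite and bounded uniformly in $x$ because $F_{N,s}\in L^1_{loc}(\R^N)$ (which is exactly what $F_{N,s}\in\cL^1_s$ from Lemma \ref{rightspacefns} gives on bounded sets); dividing by $1+|x|^{N+2s}\ge1$ only helps. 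For $|x|>2R+1$ and $|y|<R$ we have $|x-y|\ge|x|/2$ and also $|x-y|\le 2|x|$, hence $|F_{N,s}(x-y)|\le C|x|^{2s-N}\log(e+|x|)$, so $|u(x)|\le C\|f\|_{L^1}\,|x|^{2s-N}\log(e+|x|)$, and
\[
\int_{|x|>2R+1}\frac{|u(x)|}{1+|x|^{N+2s}}\,dx\le C\|f\|_{L^1}\int_{\R^N}\frac{|x|^{2s-N}\log(e+|x|)}{1+|x|^{N+2s}}\,dx<\infty,
\]
the last integral converging at infinity since the integrand decays like $|x|^{-2N}\log|x|$ and $N\ge1$, exactly as in the proof of Lemma \ref{rightspacefns}.

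Combining the two regions yields $\|u\|_{\cL^1_s}<\infty$, i.e. $F_{N,s}\ast f\in\cL^1_s$. I would also remark that $u$ is well-defined (the defining integral converges absolutely for a.e.\ $x$) by the same Fubini argument applied to $\int|F_{N,s}(x-y)||f(y)|\,dy$. The only mildly delicate point — hence the main thing to be careful about rather than a genuine obstacle — is the exceptional case $s-\frac N2\in\N_0$, where one must carry the logarithmic factor through both estimates; but since $\log$ is dominated by any positive power of $|x|$ at infinity and is locally integrable near the origin, it does not affect convergence, and the bound $2s\ge N$ guarantees $2s-N\ge0$ so there is no competing singularity at $x=y$ beyond the (integrable) logarithm in the borderline case $2s=N$.
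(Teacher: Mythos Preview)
Your proof is correct and follows essentially the same approach as the paper's: control the convolution near the support of $f$ via the local integrability of $F_{N,s}$ (Lemma \ref{rightspacefns}), and far away via the growth estimate $|F_{N,s}(z)|\lesssim|z|^{2s-N}\log(e+|z|)$ against the weight $(1+|x|^{N+2s})^{-1}$. The paper organizes this slightly differently---treating the cases $s-\tfrac N2\in\N_0$ and $s-\tfrac N2\notin\N_0$ separately rather than carrying a uniform logarithmic factor---but the substance is the same; one small caveat is that your stated bound ``$|F_{N,s}(z)|\le C|z|^{2s-N}\log(e+|z|)$ for all $z$'' fails near $z=0$ when $2s=N$, though this is harmless since you only invoke it for $|z|>R+1$.
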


\begin{proof}
Let $s$ and $f$ as in the statement and put $K:=\supp\ f$ and $k:=\sup_{y\in K}|y|^{2s-N}$. 

Consider first that $s-\frac{N}{2}\notin\N_0$. Then
\begin{align*}
|F_{N,s}\ast f(x)|&\leq \kappa_{N,s}\int_{\R^N}(|x|+|y|)^{2s-N}|f(y)|\ dy\leq C\|f\|_{L^1(\R^N)}|x|^{2s-N} + C k\|f\|_{L^1(\R^N)}
\end{align*}
for $x\in \R^N$ and for some constant $C>0$ depending only on $N$ and $s$. By Lemma \ref{rightspacefns} we have that $|x|^{2s-N}\in \cL^1_{s}$ and therefore $F_{N,s}\ast f\in \cL^1_{s}$.

Next, let $s-\frac{N}{2}\in\N_0$ and $x\in \R^N$. Let $z=x-y$ and $B_r=B_r(0)$, then 
\begin{align*}
 |\kappa_{N,s}^{-1} F_{N,s}\ast f(x)|\leq \int_{\{|z|< 1\}}|\ln|z|\ f(y)|\ dy + \int_{\{|z|\geq 1\}}|\ln (|z|)|z|^{2s-N}f(y)|\ dy=:f_1(x)+f_2(x).
\end{align*}
thus
\begin{align*}
 \int_{\R^N}\frac{f_1(x)}{1+|x|^{2s+N}}\ dx&\leq \int_{\R^N}\int_{\{|z|< 1\}}-\ln|z|\ dz |f(y)|\ dy=-\int_{\{|z|< 1\}}\ln(|z|)\ dz \|f(y)\|_{L^1(\R^N)}<\infty,\\
 \int_{\R^N}\frac{f_2(x)}{1+|x|^{2s+N}}\ dx&=\int_{K}\int_{\{|z|\geq 1\}}\frac{\ln(|z|)|z|^{2s-N}}{{1+|x+z|^{2s+N}}}\ dz |f(y)|\ dy\leq M\|f(y)\|_{L^1(\R^N)}<\infty,
 \end{align*}
for some $M>0$ depending only on $s$,$N$, and $K$. Thus $f_1,f_2\in\cL^1_{s}$ and this ends the proof.
\end{proof}

In the case where $2s<N$, the function $F_{N,s}$ has a regularizing effect. For this we use the theory of weak-$L^{p}$-spaces. As in \cite[Chapter 4.3]{LL97} we define $L^{p,w}(\R^{N})$, {$p\geq 1$} as the space of measurable functions $f:\R^{N}\to \R$ such that 
\begin{equation}
 \|f\|_{L^{p,w}(\R^{N})}:=\sup_{A\subset\R^{N}, 0<|A|<\infty}|A|^{-\frac{p-1}{p}}\int_{A}|f(x)|\ dx <\infty.
\end{equation}
The space $L^{p,w}(\R^{N})$ equipped with this norm is a Banach space (see \cite[Chapter 1]{G08}). Note that by H\"{o}lder's inequality $L^{p}(\R^{N})\subset L^{p,w}(\R^{N})$ for all $p\geq 1$.

\begin{lemma}[see also Chapter 4.3 \cite{LL97}]\label{lpw}
 Let $0<\lambda<N$. Then $f(x)=|x|^{-\lambda}\in L^{\frac{N}{\lambda},w}(\R^{N})$. In particular, if $2s<N$, then $F_{N,s}\in L^{q,w}(\R^{N})$ for $q=\frac{N}{N-2s}$.
\end{lemma}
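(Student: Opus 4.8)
The plan is to prove the pointwise bound on the weak-$L^p$ norm directly from its definition, reducing to balls centred at the origin by a one-line rearrangement argument. Fix $\lambda\in(0,N)$ and set $p:=N/\lambda$, so that $p>1$ and $\frac{p-1}{p}=\frac{N-\lambda}{N}$. Given a measurable set $A\subset\R^N$ with $0<|A|<\infty$, choose $R>0$ so that the ball $B_R(0)$ has $|B_R(0)|=|A|$, i.e. $R=(|A|/\omega_N)^{1/N}$. The whole estimate will hinge on the inequality $\int_A|x|^{-\lambda}\,dx\le\int_{B_R(0)}|x|^{-\lambda}\,dx$, together with an explicit evaluation of the latter integral.

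To obtain that inequality I would write $A=(A\cap B_R(0))\cup(A\setminus B_R(0))$ and $B_R(0)=(A\cap B_R(0))\cup(B_R(0)\setminus A)$, and use $|A\setminus B_R(0)|=|B_R(0)\setminus A|$ (which holds since $|A|=|B_R(0)|$). Since $x\mapsto|x|^{-\lambda}$ is radially nonincreasing, on $A\setminus B_R(0)$ one has $|x|^{-\lambda}\le R^{-\lambda}$, while on $B_R(0)\setminus A$ one has $|x|^{-\lambda}\ge R^{-\lambda}$; hence
\[
\int_{A\setminus B_R(0)}|x|^{-\lambda}\,dx\ \le\ R^{-\lambda}\,|A\setminus B_R(0)|\ =\ R^{-\lambda}\,|B_R(0)\setminus A|\ \le\ \int_{B_R(0)\setminus A}|x|^{-\lambda}\,dx,
\]
and adding $\int_{A\cap B_R(0)}|x|^{-\lambda}\,dx$ to both sides gives the claim. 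Integration in polar coordinates (valid because $N-\lambda>0$) yields $\int_{B_R(0)}|x|^{-\lambda}\,dx=\frac{N\omega_N}{N-\lambda}R^{N-\lambda}$, and substituting $R^{N-\lambda}=(|A|/\omega_N)^{(N-\lambda)/N}$ gives
\[
|A|^{-\frac{p-1}{p}}\int_A|x|^{-\lambda}\,dx\ \le\ |A|^{-\frac{N-\lambda}{N}}\,\frac{N\omega_N}{N-\lambda}\Big(\frac{|A|}{\omega_N}\Big)^{\frac{N-\lambda}{N}}\ =\ \frac{N}{N-\lambda}\,\omega_N^{\lambda/N}.
\]
Since the right-hand side is finite and independent of $A$, taking the supremum over all admissible $A$ shows $f\in L^{p,w}(\R^N)=L^{N/\lambda,w}(\R^N)$.

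For the second assertion, note that if $2s<N$ then $s-\tfrac N2<0$, so in particular $s-\tfrac N2\notin\N_0$ and Definition \ref{FNs:def} gives $F_{N,s}(x)=\kappa_{N,s}|x|^{2s-N}$. Applying the first part with $\lambda:=N-2s\in(0,N)$ shows $|x|^{2s-N}\in L^{q,w}(\R^N)$ for $q=N/\lambda=N/(N-2s)$, and since $\|\cdot\|_{L^{q,w}(\R^N)}$ is positively homogeneous, $F_{N,s}=\kappa_{N,s}|x|^{2s-N}\in L^{q,w}(\R^N)$ as well. I do not expect a real obstacle here: the only point needing a little care is the rearrangement inequality $\int_A|x|^{-\lambda}\le\int_{B_R(0)}|x|^{-\lambda}$, which is the elementary ``bathtub'' comparison spelled out above and requires no symmetric-decreasing-rearrangement machinery.
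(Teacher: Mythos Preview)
Your proof is correct and follows essentially the same approach as the paper's: reduce the supremum over all sets $A$ to balls $B_R(0)$ using the fact that $|x|^{-\lambda}$ is radially decreasing, then evaluate the ball integral explicitly and observe that the resulting bound is independent of $R$. The only difference is that you spell out the ``bathtub'' rearrangement inequality $\int_A|x|^{-\lambda}\,dx\le\int_{B_R(0)}|x|^{-\lambda}\,dx$ in detail, whereas the paper invokes it in one line; your explicit final constant $\frac{N}{N-\lambda}\,\omega_N^{\lambda/N}$ agrees with the paper's (modulo notation).
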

\begin{proof}
Fix $\lambda>0$, $r=\frac{q}{q-1}$ (thus we have $\frac{1}{r}+\frac{1}{q}=1$). Since $N>\lambda$ and $r\mapsto r^{-\lambda}$ is a decreasing function we have
\begin{align*}
 \|f\|_{L^{q,w}(\R^{N})}&=\sup_{A\subset\R^{N}, 0<|A|<\infty}|A|^{-\frac{1}{r}}\int_{A}|x|^{-\lambda}\ dx\\
&=\sup_{R>0}\left(N|B|R^{N}\right)^{-1/r}N|B|\int_{0}^{R}m^{-\lambda+N-1}\ dm= \frac{\left(N|B|\right)^{1-r^{-1}}}{N-\lambda} \sup_{R>0}R^{-N/r+N-\lambda}.
\end{align*}

Thus, if $r=\frac{N}{N-\lambda}$ with $q=\frac{r}{r-1}=\frac{N}{\lambda}$ we get $\|f\|_{L^{q,w}(\R^{N})}=\frac{\left(N|B|\right)^{\frac{1}{q}}}{N-\lambda}<\infty$.
\end{proof}

\begin{thm}[see Theorem 1.2.13, p. 21, \cite{G08}]\label{wyineq}
 Let $U\subset\R^{N}$ be any open set and let $g\in L^{p}(U)$, $1\leq p<\infty$, $k\in L^{q,w}(\R^{N})$ and $r,q\in(1,\infty)$ be given such that
$ \frac{1}{p}+\frac{1}{q}=1+\frac{1}{r}.$
Then there is a constant $C=C(N,q,r)>0$ such that
\[
 \left\|k\ast g\right\|_{L^{r}(U)}=\left\|\;\int_{\R^{N}}k(\cdot-y)g(y)\ dy\right\|_{L^{r}(U)}\leq C\|k\|_{L^{q,w}(\R^{N})}\|g\|_{L^{p}(U)}.
\]
\end{thm}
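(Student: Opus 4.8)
The statement to prove is the Young-type inequality for convolution with a weak-$L^q$ kernel (Theorem~\ref{wyineq}), which the paper cites from Grafakos. Here is how I would organize a self-contained proof.

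\medskip

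The plan is to deduce the inequality from the Marcinkiewicz interpolation theorem together with the boundedness of convolution with a \emph{fixed} kernel $k$. First I would fix $k\in L^{q,w}(\R^N)$ and consider the linear operator $T_k g:=k\ast g$. The endpoint estimates I need are of weak type: I claim that for $1\le p<\infty$ and $1/p+1/q=1+1/r$, the map $g\mapsto k\ast g$ sends $L^p$ to $L^{r,w}$ boundedly, and then I would upgrade the conclusion from $L^{r,w}$ to $L^r$ by interpolating between two such weak-type bounds with exponents straddling the target. Concretely, pick $p_0<p<p_1$ with corresponding $r_0<r<r_1$ determined by the scaling relation; establishing $T_k:L^{p_i}\to L^{r_i,w}$ for $i=0,1$ and invoking Marcinkiewicz gives $T_k:L^p\to L^r$ strongly, with constant controlled by $\|k\|_{L^{q,w}}$ and the interpolation parameters.

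\medskip

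The core computation is the single weak-type bound. To get $\|k\ast g\|_{L^{r,w}}\le C\|k\|_{L^{q,w}}\|g\|_{L^p}$, I would split the kernel at a height $\lambda>0$ to be optimized: write $k=k\mathbf 1_{\{|k|\le\lambda\}}+k\mathbf 1_{\{|k|>\lambda\}}=:k^{\mathrm{low}}_\lambda+k^{\mathrm{high}}_\lambda$. The low part lies in $L^{q'}$-type spaces with norm controlled by a power of $\lambda$ and $\|k\|_{L^{q,w}}$ (using the layer-cake formula and the definition of the weak norm), so $k^{\mathrm{low}}_\lambda\ast g$ is bounded pointwise via Hölder by $\lambda^{1-q/\tilde p}$ times $\|k\|_{L^{q,w}}^{q/\tilde p}\|g\|_{L^p}$ for an appropriate exponent; the high part has small measure support, hence small $L^1$ norm (again by layer-cake), so $k^{\mathrm{high}}_\lambda\ast g$ is controlled in $L^p$ by Young's inequality with a positive power of $\lambda^{-1}$ times $\|g\|_{L^p}$. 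Then I estimate the distribution function $|\{|k\ast g|>2\alpha\}|$ by $|\{|k^{\mathrm{low}}_\lambda\ast g|>\alpha\}|+|\{|k^{\mathrm{high}}_\lambda\ast g|>\alpha\}|$, choosing $\lambda=\lambda(\alpha)$ so that the first set is empty and the second is bounded by Chebyshev; optimizing yields exactly the $L^{r,w}$ bound with the right homogeneity, which is forced by scaling ($r^{-1}=p^{-1}-(1-q^{-1})$).

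\medskip

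The main obstacle is bookkeeping the exponents in the splitting argument so that the powers of $\lambda$ combine to give precisely the scaling-determined exponent $r$, and verifying that the constants remain finite when $p=1$ (where the low/high decomposition degenerates slightly and one must use $q=r$ directly, recovering the elementary fact that convolution with $L^{q,w}$ maps $L^1\to L^{q,w}$). Once the weak-type estimate is in hand for a range of $p$'s, the passage to the strong-type inequality via Marcinkiewicz is routine, and the dependence $C=C(N,q,r)$ comes out of tracking the interpolation constant. Alternatively, since the paper only ever applies this with $k=F_{N,s}=\kappa_{N,s}|x|^{2s-N}$, one could instead invoke the classical Hardy--Littlewood--Sobolev inequality directly, but the argument above has the advantage of covering the general kernel as stated.
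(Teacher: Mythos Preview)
The paper does not supply its own proof of this theorem: the statement is quoted verbatim as \cite[Theorem~1.2.13]{G08} and used as a black box to derive Corollaries~\ref{p-reg} and~\ref{p-reg2}. There is therefore nothing in the paper to compare your argument against.

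That said, your outline is essentially the standard proof one finds in Grafakos: split the kernel $k=k\mathbf 1_{\{|k|\le\lambda\}}+k\mathbf 1_{\{|k|>\lambda\}}$, control the low part in $L^{p'}$ via the layer-cake formula (using $p'>q$, which follows from $1/p+1/q>1$) to get an $L^\infty$ bound on $k^{\mathrm{low}}_\lambda\ast g$, control the high part in $L^1$ to get an $L^p$ bound on $k^{\mathrm{high}}_\lambda\ast g$, choose $\lambda=\lambda(\alpha)$ to kill the first superlevel set, and read off the weak-type $(p,r)$ bound; then interpolate between two such estimates via Marcinkiewicz. Your remark that the paper only ever invokes this with $k(x)=|x|^{2s-N}$, so that Hardy--Littlewood--Sobolev would suffice, is also correct and worth noting. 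The proposal is sound; just be aware that the constants in Marcinkiewicz blow up as the interpolation parameter approaches the endpoints, which is why $C=C(N,q,r)$ rather than a universal constant.
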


A direct consequence of Lemma \ref{lpw} and Theorem \ref{wyineq} is

\begin{cor}\label{p-reg}
Let $0<s<\frac{N}{2}$, $1\leq p<\frac{N}{2s}$, and $f\in L^{p}(\R^N)$. Then $F_{N,s}\ast f\in L^{\frac{Np}{N-2sp}}(\R^N)$ 
\end{cor}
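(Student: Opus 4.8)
The plan is to derive Corollary~\ref{p-reg} as an immediate application of the Young-type inequality for weak-$L^p$ spaces (Theorem~\ref{wyineq}) together with the membership $F_{N,s}\in L^{q,w}(\R^N)$ established in Lemma~\ref{lpw}. First I would record that the hypothesis $0<s<\frac N2$ guarantees $2s<N$, so Lemma~\ref{lpw} applies and gives $F_{N,s}\in L^{q,w}(\R^N)$ with $q=\frac{N}{N-2s}\in(1,\infty)$. Next I would set $r:=\frac{Np}{N-2sp}$ and verify that the exponent bookkeeping of Theorem~\ref{wyineq} is satisfied: one needs $r\in(1,\infty)$ and $\frac1p+\frac1q=1+\frac1r$. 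The condition $1\le p<\frac{N}{2s}$ is exactly what makes $N-2sp>0$, hence $r$ finite and positive; and since $p\ge 1$ one checks $r\ge p\ge 1$ with $r>1$ unless $p=1$ and $s\to 0$, so $r\in(1,\infty)$ (if $p=1$ the borderline is handled by the same algebra, giving $r=\frac{N}{N-2s}=q>1$). The identity $\frac1p+\frac1q=1+\frac1r$ is a one-line computation: $\frac1q = \frac{N-2s}{N}=1-\frac{2s}{N}$, so $\frac1p+\frac1q = \frac1p + 1 - \frac{2s}{N} = 1 + \frac{N-2sp}{Np} = 1+\frac1r$.

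With these checks in place, I would apply Theorem~\ref{wyineq} with $U=\R^N$, $g=f\in L^p(\R^N)$, $k=F_{N,s}\in L^{q,w}(\R^N)$, and the exponent $r$ as above, concluding directly that
\[
\|F_{N,s}\ast f\|_{L^r(\R^N)} \le C\,\|F_{N,s}\|_{L^{q,w}(\R^N)}\,\|f\|_{L^p(\R^N)} < \infty,
\]
which is precisely the assertion $F_{N,s}\ast f\in L^{\frac{Np}{N-2sp}}(\R^N)$. A small point worth noting is that one should also remark that the convolution $F_{N,s}\ast f$ is well-defined (a.e.\ finite), which follows from the same estimate or, alternatively, from the fact that the proof of Theorem~\ref{wyineq} produces the convolution as an absolutely convergent integral for a.e.\ $x$.

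There is essentially no obstacle here: the corollary is a formal consequence of the two preceding results, and the only thing to be careful about is the arithmetic linking $p$, $q=\frac{N}{N-2s}$, and $r=\frac{Np}{N-2sp}$, together with checking that the hypotheses $r,q\in(1,\infty)$ required by Theorem~\ref{wyineq} hold under the stated range $1\le p<\frac{N}{2s}$. The case $p=1$ deserves a brief explicit mention since then Theorem~\ref{wyineq} is applied at the endpoint $\frac1p=1$, which is still admissible because $q,r>1$; no other subtlety arises.
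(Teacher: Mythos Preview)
Your proposal is correct and follows exactly the same approach as the paper: invoke Lemma~\ref{lpw} to place $F_{N,s}$ in $L^{q,w}(\R^N)$ with $q=\frac{N}{N-2s}$, then apply Theorem~\ref{wyineq} with $r=\frac{Np}{N-2sp}$. The paper's proof is just a terse two-line version of what you wrote, omitting the explicit verification of the exponent identity $\frac1p+\frac1q=1+\frac1r$ that you spelled out.
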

\begin{proof}
By Lemma \ref{lpw} we have $F_{N,s}\in L^{q,w}(\R^{N})$ for $q=\frac{N}{N-2s}$. The claim follows by Theorem \ref{wyineq} using $p\in [1,\frac{N}{2s})$ and $r=\frac{Np}{N-2sp}$.
\end{proof}

\begin{cor}\label{p-reg2}
  Let $0<s<\frac{N}{2}$ and $f\in L^{p}(\R^N)$, $1\leq p<\infty$ with compact support. Then $F_{N,s}\ast f \in L^{q}(\R^N)$ for every $q\in[\frac{N}{N-2s},\frac{N}{Np-2sp}]$ if $p<\frac{N}{2s}$ and for every $q\in[\frac{N}{N-2s},\infty)$ if $p\geq\frac{N}{2s}$.
 \end{cor}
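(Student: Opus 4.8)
The plan is to deduce Corollary \ref{p-reg2} by splitting $F_{N,s}$ into a near-singular part and a tail part, using the compact support of $f$ to control the tail. First I would write $F_{N,s}=F_{N,s}1_{B}+F_{N,s}1_{\R^N\setminus B}$, where $B=B_1(0)$. On $B$ the function $F_{N,s}$ is (up to the logarithmic correction, which is harmless since $2s<N$ forces $s-\tfrac N2\notin\N_0$, so in fact $F_{N,s}(x)=\kappa_{N,s}|x|^{2s-N}$) comparable to $|x|^{2s-N}$, hence $F_{N,s}1_{B}\in L^{q_0}(\R^N)$ for every $q_0<\frac{N}{N-2s}$; since it has compact support, it also lies in $L^1(\R^N)$. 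The tail $F_{N,s}1_{\R^N\setminus B}$ is bounded and decays like $|x|^{2s-N}$; in particular it lies in $L^\infty(\R^N)\cap L^{q}(\R^N)$ for all $q>\frac{N}{N-2s}$.

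Next I would apply Young's convolution inequality to each piece. For the near-singular part, $F_{N,s}1_B\in L^1(\R^N)$ and $f\in L^p(\R^N)$ with compact support give $F_{N,s}1_B\ast f\in L^p(\R^N)$; moreover, since both $F_{N,s}1_B$ and $f$ have compact support, so does $F_{N,s}1_B\ast f$, and by Corollary \ref{p-reg} (applied with the integrable, hence finite-measure, weight localized — or more simply by Young with $L^{q_0}$, $q_0<\frac N{N-2s}$, against $L^1_{\mathrm{loc}}$ on a bounded set) it actually lies in $L^{q}$ for the full range $q\in[\frac{N}{N-2s}, r_0]$ with $r_0=\frac{Np}{N-2sp}$ when $p<\frac{N}{2s}$, and all $q\in[\frac N{N-2s},\infty)$ when $p\geq \frac N{2s}$. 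For the tail, $F_{N,s}1_{\R^N\setminus B}\in L^{q}(\R^N)$ for every $q>\frac{N}{N-2s}$ and in $L^\infty$, while $f\in L^p(\R^N)$ with compact support implies $f\in L^1(\R^N)\cap L^p(\R^N)$; Young's inequality $L^q\ast L^1\subset L^q$ then gives $F_{N,s}1_{\R^N\setminus B}\ast f\in L^{q}(\R^N)$ for all $q>\frac N{N-2s}$, and $L^\infty\ast L^1\subset L^\infty$ handles $q=\infty$. The borderline exponent $q=\frac{N}{N-2s}$ itself is covered by Corollary \ref{p-reg} (with $p=1$, using $f\in L^1(\R^N)$).

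Combining the two pieces over the claimed ranges of $q$ finishes the proof: the lower endpoint $\frac{N}{N-2s}$ comes from Corollary \ref{p-reg}, the intermediate exponents up to $r_0$ (resp. up to $\infty$) from the near-singular part together with compact support and interpolation, and the large exponents from the bounded, integrable tail. The main obstacle, and the only point requiring care, is the borderline and small-exponent behavior of the local part $F_{N,s}1_B\ast f$: near the lower endpoint $q=\frac N{N-2s}$ one cannot use Young's inequality with $L^1$ alone, so one must either invoke Corollary \ref{p-reg} directly or exploit that $F_{N,s}1_B\ast f$ has compact support to upgrade $L^{q_0}$-membership for $q_0<\frac N{N-2s}$ to $L^{q}$ for $q$ up to the sharp exponent; I would use Corollary \ref{p-reg} to keep this clean, noting that truncating $F_{N,s}$ only improves integrability and that the compact support of $f$ ensures all the convolutions are well defined in $\cL^1_s$ as well.
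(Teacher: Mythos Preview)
Your approach is workable but considerably more involved than necessary. The paper's proof is a two-line argument: since $f$ has compact support, H\"older's inequality gives $f\in L^{\tilde p}(\R^N)$ for every $\tilde p\in[1,p]$; one then applies Corollary~\ref{p-reg} directly for each $\tilde p$ in $[1,p]$ (when $p<N/(2s)$) or in $[1,N/(2s))$ (when $p\geq N/(2s)$), and the map $\tilde p\mapsto N\tilde p/(N-2s\tilde p)$ sweeps out exactly the claimed range of $q$. No decomposition of $F_{N,s}$ is needed.

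Your route---splitting $F_{N,s}=F_{N,s}1_B+F_{N,s}1_{\R^N\setminus B}$ and running Young on each piece---does eventually work, but the exposition blurs the crucial steps. For the near part you rely on compact support of $F_{N,s}1_B\ast f$ to go from the top exponent down, and for the top exponent itself you fall back on Corollary~\ref{p-reg} anyway (Young with $F_{N,s}1_B\in L^{q_0}$, $q_0<N/(N-2s)$, only gives $r<r_0$, never the endpoint). For the tail you again invoke Corollary~\ref{p-reg} at $q=N/(N-2s)$. So in the end you are using Corollary~\ref{p-reg} at both endpoints, and the splitting and Young-inequality machinery only fills in the interior of the interval---which the paper gets for free by varying $\tilde p$. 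The paper's observation that it is the \emph{integrability of $f$} (not of $F_{N,s}$) that should be varied, via compact support and H\"older, is the cleaner and shorter idea.
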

\begin{proof}
 Since $f$ has compact support, we have by H\"older's inequality that $f\in L^{\tilde p}(\R^N)$ for every $\tilde p\in[1,\min\{p,\frac{N}{2s})\}$. The result follows by Corollary \ref{p-reg}.
\end{proof}

\begin{cor}\label{generalcase}
Let $s>0$ and $f\in L^1(\R^N)$ with compact support, then $u=F_{N,s}\ast f\in \cL^1_{s}$ is a distributional solution of $(-\Delta)^su=f$ in $\R^N$.
\end{cor}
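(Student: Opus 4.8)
The plan is to verify the distributional identity $\langle(-\Delta)^s u,\phi\rangle=\langle f,\phi\rangle$ for every $\phi\in C^\infty_c(\R^N)$ by moving the operator onto the (explicit, radial) fundamental solution $F_{N,s}$ and then unwinding the convolution with Fubini. First I would note that $u=F_{N,s}\ast f$ belongs to $\cL^1_s$: when $2s\geq N$ this is exactly Lemma \ref{finiteconv}, and when $2s<N$ it follows from Corollary \ref{p-reg2} (with $p=1$, so $u\in L^{N/(N-2s)}(\R^N)\subset\cL^1_s$ by Remark \ref{temp:dis}(1)). Hence, by Remark \ref{temp:dis}(2), $(-\Delta)^s u$ is a well-defined distribution and the pairing $\langle(-\Delta)^su,\phi\rangle=\int_{\R^N}u(x)(-\Delta)^s\phi(x)\,dx$ makes sense for all $\phi\in C^\infty_c(\R^N)$.

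Next I would compute, using the definition of convolution,
\[
\langle(-\Delta)^su,\phi\rangle=\int_{\R^N}\Big(\int_{\R^N}F_{N,s}(x-y)f(y)\,dy\Big)(-\Delta)^s\phi(x)\,dx,
\]
and justify swapping the order of integration by Fubini's theorem: the map $(x,y)\mapsto F_{N,s}(x-y)f(y)(-\Delta)^s\phi(x)$ is absolutely integrable because $f$ has compact support (say $\supp f\subset B_\rho$), $F_{N,s}(\cdot-y)\in\cL^1_s$ uniformly for $y\in B_\rho$, and $(-\Delta)^s\phi$ decays like $(1+|x|^{N+2s})^{-1}$ by Lemma \ref{boundedevaluationexistence} since $\phi\in\cS\subset S^{2m+2}_s$. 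After the swap,
\[
\langle(-\Delta)^su,\phi\rangle=\int_{\R^N}f(y)\Big(\int_{\R^N}F_{N,s}(x-y)(-\Delta)^s\phi(x)\,dx\Big)dy.
\]
For fixed $y$, the inner integral equals $\langle(-\Delta)^sF_{N,s}(\cdot-y),\phi\rangle=\langle(-\Delta)^sF_{N,s},\phi(\cdot+y)\rangle$; since $F_{N,s}$ is a fundamental solution (Theorem \ref{fundamentalrn}) and Remark \ref{0solution} applies, this is precisely $\langle\delta_y,\phi\rangle=\phi(y)$. Substituting back gives $\langle(-\Delta)^su,\phi\rangle=\int_{\R^N}f(y)\phi(y)\,dy=\langle f,\phi\rangle$, which is the desired conclusion (here $\Omega=\R^N$, so there is no boundary condition to check).

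The only delicate point is the Fubini justification and, relatedly, ensuring that the change of variables $x\mapsto x-y$ inside the inner integral correctly produces the action of $(-\Delta)^sF_{N,s}$ against the translated test function; this is where one must be a little careful because $F_{N,s}$ is only locally integrable and the operator $(-\Delta)^s$ is nonlocal. All the needed integrability is already packaged in Lemma \ref{boundedevaluationexistence} (polynomial decay of $(-\Delta)^s\phi$) and Lemma \ref{rightspacefns} / Lemma \ref{finiteconv} (membership of $F_{N,s}$ in $\cL^1_s$), so once these are invoked the argument is routine. I would also remark that the same proof shows, more generally, $(-\Delta)^s(F_{N,s}\ast f)=f$ whenever $F_{N,s}\ast f\in\cL^1_s$, so the compact-support hypothesis on $f$ is used only to guarantee this membership via Lemma \ref{finiteconv} and Corollary \ref{p-reg2}.
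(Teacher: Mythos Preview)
Your proposal is correct and follows essentially the same approach as the paper: first show $u\in\cL^1_s$ via Lemma~\ref{finiteconv} (case $2s\geq N$) or Corollary~\ref{p-reg}/\ref{p-reg2} (case $2s<N$), then swap the order of integration and invoke Theorem~\ref{fundamentalrn} together with Remark~\ref{0solution} to identify the inner integral as $\phi(y)$. Your Fubini justification via Lemma~\ref{boundedevaluationexistence} and Lemma~\ref{rightspacefns} is more explicit than the paper's, which simply writes the computation in one line.
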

\begin{proof}
By Lemma \ref{finiteconv} or Corollary \ref{p-reg} we have $u\in \cL^1_s$. And, moreover, 
$$
\pair{(-\Delta)^{s}u}{\phi} = \int_{\R^N} f(y)\pair{(-\Delta)^{s}F_{N,s}(\cdot-y)}{\phi}\ dy=\int_{\R^N} f(y) \phi(y)\ dy
$$
for $\varphi\in C^{\infty}_c(\R^N)$ by Theorem \ref{fundamentalrn} and Lemma \ref{ibyp:RN}.
\end{proof}

\begin{thm}\label{ppprn}
Let $m\in\N$, $\sigma\in(0,1]$, $s=m+\sigma$ such that $0<s<\frac{N}{2}$, $f\in L^1(\R^N)$ have compact support, and $u\in \cL^1_{s}$ be a distributional solution of $(-\Delta)^su=f$ in $\R^N$.  Then $u=F_{N,s}\ast f+ P$, where $P$ is a polynomial of degree $n<2s$ for some $n\in\N_0$.  In particular, if $\lim\limits_{|x|\to\infty}u=0$ then $u=F_{N,s}\ast f$ and $\inf_{K}u>0$ for every $K\subset\subset \R^N$ whenever $f\geq 0$ is nonzero.
\end{thm}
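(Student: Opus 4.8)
The plan is to reduce everything to a Liouville-type statement for $s$-harmonic tempered distributions with a growth bound. First I would set $w := u - F_{N,s}\ast f$. By Corollary \ref{generalcase}, $F_{N,s}\ast f$ is a distributional solution of $(-\Delta)^s v = f$ in $\R^N$, and by Lemma \ref{finiteconv} (using $2s < N$, or directly Corollary \ref{p-reg}) it lies in $\cL^1_s$; hence $w \in \cL^1_s$ and $(-\Delta)^s w = 0$ in $\R^N$ in the sense of distributions, i.e. $w$ is $s$-harmonic on all of $\R^N$. Since $w \in \cL^1_s \subset \cS'$ (Remark \ref{temp:dis}), I can take Fourier transforms: $|\xi|^{2s}\widehat w(\xi) = 0$ in $\cS'$, so $\widehat w$ is a tempered distribution supported on $\{0\}$, hence a finite linear combination of derivatives of $\delta_0$. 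Inverting, $w$ is a polynomial $P$. The membership $w \in \cL^1_s$ forces $\int_{\R^N} |P(x)|(1+|x|^{N+2s})^{-1}\,dx < \infty$, which is possible only if $\deg P =: n$ satisfies $n < 2s$ (comparing the growth $|x|^n$ against the weight $|x|^{-N-2s}$ near infinity). This gives the representation $u = F_{N,s}\ast f + P$ with $\deg P < 2s$.

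For the second assertion, suppose $\lim_{|x|\to\infty} u = 0$. I would first argue that $F_{N,s}\ast f \to 0$ at infinity: since $f$ has compact support $K$ and $2s<N$, for $|x|$ large one has $|F_{N,s}\ast f(x)| \le \kappa_{N,s}\int_K |x-y|^{2s-N}|f(y)|\,dy \to 0$ because $2s - N < 0$ and $|x-y|\to\infty$ uniformly in $y\in K$ (dominated convergence, or the elementary bound $|x-y|\ge |x|-\mathrm{diam}(K)\,$ used in the proof of Lemma \ref{finiteconv}). Consequently $P(x) = u(x) - F_{N,s}\ast f(x) \to 0$ at infinity, and a polynomial tending to $0$ at infinity is identically $0$. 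Hence $u = F_{N,s}\ast f$.

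Finally, for the positivity statement: assume $f \ge 0$, $f \not\equiv 0$, and fix $K \subset\subset \R^N$. For $x \in K$,
\begin{align*}
u(x) = \kappa_{N,s}\int_{\R^N} |x-y|^{2s-N} f(y)\,dy,
\end{align*}
with $\kappa_{N,s} > 0$ (as $s - N/2 \notin \N_0$ here, being negative) and $|x-y|^{2s-N} > 0$ for $x \ne y$; since $f \ge 0$ is nonzero on a set of positive measure, the integral is strictly positive for every $x$, so $u > 0$ pointwise. To upgrade this to $\inf_K u > 0$, note $x \mapsto u(x)$ is continuous on $K$: the kernel $|x-y|^{2s-N}$ has an integrable singularity ($2s-N > -N$) and $f$ is bounded-support-$L^1$, so a standard dominated-convergence argument gives continuity of the convolution; then a positive continuous function on the compact set $\overline K$ attains a positive minimum. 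I expect the main obstacle to be the clean justification of the Fourier-support/Liouville step — identifying the degree constraint $n < 2s$ from the $\cL^1_s$ condition and handling the distributional Fourier transform of $|\xi|^{2s}$ carefully — together with making the "$F_{N,s}\ast f \to 0$" decay and the continuity of the convolution rigorous when $f$ is merely $L^1$ with compact support rather than smooth.
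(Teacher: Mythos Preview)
Your approach is essentially identical to the paper's: set $w = u - F_{N,s}\ast f$, show $(-\Delta)^s w = 0$, pass to the Fourier side to conclude $\widehat w$ is supported at the origin so that $w$ is a polynomial, and then use $w \in \cL^1_s$ to bound the degree below $2s$. The one step you rightly flag as delicate---upgrading $(-\Delta)^s w = 0$ from $\cD'$ to $\cS'$ so that the Fourier argument is legitimate---is handled in the paper by approximating $\psi\in\cS$ by $\phi_n\in C_c^\infty$ and invoking Lemma~\ref{boundedevaluationexistence} (which bounds $|(-\Delta)^s(\psi-\phi_n)(x)|$ by $C\|\psi-\phi_n\|_{2m+2,s}(1+|x|^{N+2s})^{-1}$) together with $w\in\cL^1_s$; note also that your citation of Lemma~\ref{finiteconv} is off since that lemma treats the case $2s\ge N$, but Corollary~\ref{generalcase} already gives $F_{N,s}\ast f\in\cL^1_s$ directly.
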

\begin{proof}
	By Theorem \ref{fundamentalrn} we have that $u=F_{N,s}\ast f$ is a distributional solution of $(-\Delta)^su=f$.  We now argue as in \cite[Corollary 2.4.3]{G08}. Let $v\in \cL^1_{s}$ be a distributional solution of $(-\Delta)^s v=f$ in $\R^N$. Then, by Remark \ref{temp:dis} we have that $w:= u-v\in \cL^1_{s}\subset \cS'$ and thus $\pair{(-\Delta)^s w}{\phi}=0$ for all $\phi\in C_c^\infty$. Let $\psi\in \cS$ and $(\phi_n)_n\subset C^{\infty}_c(\R^N)$ such that $\phi_n\to \psi$ in $C^{2m+2}$. Then
	\begin{align*}
	\pair{(-\Delta)^s w}{\psi}=\pair{(-\Delta)^s w}{\psi-\phi_n}\leq C \|\psi-\phi_n\|_{C^{2m+2}(\R^N)} \int_{\R^N} \frac{|w(x)|}{1+|x|^{2s+N}}\ dx \to0
	\end{align*}
	as $n\to\infty$, by Lemma \ref{boundedevaluationexistence}. Therefore $(-\Delta)^s w \in \cS'$ and $\pair{(-\Delta)^sw}{\psi}=0$ for all $\psi \in \cS$.  This implies that $w$ is supported in the origin, and then \cite[Corollary 2.4.2]{G08} yields that $w$ is polynomial of degree $n\in \mathbb N$. Since $w\in \cL^1_{s}$ we have that $n<2s$, and the claim follows.
\end{proof}

\begin{remark}
Note that if $s>\frac{N}{p}$, $f\in L^p(\R^N)$ with compact support, then $F_{N,s}\ast f\in C^{s-\frac{N}{p}}(\R^N)$, see for example \cite[Section 4.2, Theorem 2.2, p.155]{M1996} for the case $s-\frac{N}{p}<1$ and the general case follows by differentiation.
\end{remark}

\section{Representation of solutions in the ball}\label{ballcase}

Let $m\in \N_0$, $\sigma\in(0,1]$, $s=m+\sigma$, $N\in \N$ and recall that $d(x):=\dist(x,B)$ for $x\in \R^N$. In this section provide a representation formula for solutions in a ball in terms of a kernel $\cG_s$ given by \emph{Boggio's formula} \eqref{greenball-intro}. We show that $u(x)=\int_B\cG_s(x,y)f(y)\ dy$ for $x\in\R^N$ if and only if $u$ is a solution (in a suitable sense) of $(-\Delta)^su(x)=f$ in $B$ and $u\equiv 0$ on $\R^N\setminus B$. 

A key ingredient in our proofs is the following iteration formula.
\begin{lemma}\label{Lap:green}
If $s>1$ then $-\Delta_x\ \cG_{s}(x,y)=\cG_{s-1}(x,y)-k_{N,s}4(s-1)P_{s-1}(x,y)$ for all $x,y\in B$, $x\neq y$, where
\begin{align}\label{Psm1}
P_{s-1}(x,y)&:=\frac{(1-|x|^2)_+^{s-2}(1-|y|^2)_+^{s-1}(1-|x|^2|y|^2)}{[x,y]^{N}}
\end{align}
for $x,y\in\R^N$, $x\neq y$, and $[x,y]:=\sqrt{|x|^2|y|^2-2x\cdot y +1}$.
\end{lemma}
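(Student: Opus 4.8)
The plan is to compute $-\Delta_x \cG_s(x,y)$ directly from the integral representation \eqref{greenball-intro}, treating $y$ as a fixed parameter and $x$ as the active variable. Write $\cG_s(x,y) = k_{N,s}\,|x-y|^{2s-N}\, \Phi_s(\rho(x,y))$, where $\Phi_s(t) := \int_0^t \frac{v^{s-1}}{(v+1)^{N/2}}\,dv$ satisfies $\Phi_s'(t) = \frac{t^{s-1}}{(t+1)^{N/2}}$, and recall $\rho(x,y) = \frac{(1-|x|^2)_+(1-|y|^2)_+}{|x-y|^2}$. It is convenient to introduce the abbreviations $r := |x-y|^2$, $p := 1-|x|^2$, $q := 1-|y|^2$ (assuming $x,y\in B$ so $p,q>0$), so that $\rho = pq/r$ and $|x-y|^{2s-N} = r^{s - N/2}$; also recall $[x,y]^2 = |x|^2|y|^2 - 2x\cdot y + 1$, and note the algebraic identity $r\,[x,y]^2 \cdot (\text{something})$ — more precisely, a short computation gives $[x,y]^2 = |x-y|^2\,(1-\rho) \cdot (\ldots)$; the key identity I would pin down first is that $\rho + 1 = \frac{[x,y]^2}{|x-y|^2}$ when $x,y\in B$ (equivalently $pq + r = [x,y]^2$), which is the elementary but essential algebraic fact that lets $(1+\rho)^{N/2}$ collapse to $[x,y]^N/r^{N/2}$ and thereby produces the $[x,y]^N$ denominator in \eqref{Psm1}.

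Next I would carry out the Laplacian via the product/chain rule. Using $-\Delta_x(FG) = (-\Delta_x F)G + F(-\Delta_x G) - 2\nabla_x F\cdot\nabla_x G$ with $F = r^{s-N/2}$ and $G = \Phi_s(\rho)$, one needs: (i) $\Delta_x r^{s-N/2}$, which is standard since $r = |x-y|^2$ gives $\nabla_x r = 2(x-y)$, $\Delta_x r = 2N$, hence $\Delta_x r^{a} = 4a(a-1)r^{a-1}|x-y|^2/\ldots$ — concretely $\Delta_x(|x-y|^{2a}) = 2a(2a+N-2)|x-y|^{2a-2}$, which with $2a = 2s-N$ vanishes only in degenerate cases and otherwise contributes; (ii) $\nabla_x \rho$ and $\Delta_x \rho$, computed from $\rho = pq/r$ with $\nabla_x p = -2x$, $\Delta_x p = -2N$; (iii) the chain rule terms $\nabla_x \Phi_s(\rho) = \Phi_s'(\rho)\nabla_x\rho$ and $\Delta_x\Phi_s(\rho) = \Phi_s''(\rho)|\nabla_x\rho|^2 + \Phi_s'(\rho)\Delta_x\rho$, where $\Phi_s'(\rho) = \rho^{s-1}(1+\rho)^{-N/2}$ and $\Phi_s''(\rho) = (s-1)\rho^{s-2}(1+\rho)^{-N/2} - \frac{N}{2}\rho^{s-1}(1+\rho)^{-N/2-1}$. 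Assembling all pieces, one expects massive cancellation: the terms should reorganize into $\cG_{s-1}(x,y) = k_{N,s-1}r^{s-1-N/2}\Phi_{s-1}(\rho)$ plus a remainder. The identity $k_{N,s-1} = k_{N,s}\cdot 4 s \cdot (\ldots)$ — from $k_{N,s} = \frac{\Gamma(N/2)}{\pi^{N/2}4^s\Gamma(s)^2}$ one gets $k_{N,s-1}/k_{N,s} = 4\Gamma(s)^2/\Gamma(s-1)^2 = 4(s-1)^2$ — must be tracked to match constants. The leftover terms, after using $\rho+1 = [x,y]^2/|x-y|^2$ to eliminate the $(1+\rho)$ powers and $1-|x|^2|y|^2 = p + q - pq = \ldots$ (the identity $1 - |x|^2|y|^2 = (1-|x|^2) + |x|^2(1-|y|^2) = p + (1-p)q$, so $1-|x|^2|y|^2 = p + q - pq$), should collapse precisely to $-4(s-1)k_{N,s}P_{s-1}(x,y)$ with $P_{s-1}$ as in \eqref{Psm1}.

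The main obstacle is the bookkeeping: this is a long but in-principle-routine computation where the difficulty is organizing the roughly half-dozen contributions so that the cancellations are visible and the final remainder assembles into exactly the stated $P_{s-1}$ with the right constant and the right powers of $p$, $q$, $[x,y]$, and $(1-|x|^2|y|^2)$. A clean way to control this is to factor out $k_{N,s}\, r^{s-1-N/2}\,\rho^{s-2}(1+\rho)^{-N/2}$ up front and reduce everything to a rational function in the scalar quantities $p, q, r$ (and $x\cdot y$ through $[x,y]^2 = pq + r$), then verify the resulting polynomial identity. I would also remark that the formula extends to all of $\R^N$ (with $(\cdot)_+$ truncations making both sides vanish when $x\notin B$ or $y\notin B$, since then $\rho \equiv 0$ and $P_{s-1}\equiv 0$ near such points), and I would relegate the bulk of the arithmetic to the Appendix as announced, stating here only the structure of the argument. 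Because the identity is purely pointwise and away from the diagonal $x=y$, no distributional subtleties arise at this stage.
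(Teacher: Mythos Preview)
Your approach is essentially the same as the paper's: write $\cG_s = k_{N,s}\,|x-y|^{2s-N}\Phi_s(\rho)$, apply the product rule, compute $\nabla\rho$, $\Delta\rho$, $\Phi_s'$, $\Phi_s''$, and simplify using $\rho+1=[x,y]^2/|x-y|^2$. The paper carries this out exactly along these lines.

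There is, however, one genuine step missing from your plan. After applying the product rule you will have a term $(-\Delta_x|x-y|^{2s-N})\,\Phi_s(\rho)$, and this is the \emph{only} place the integral $\Phi_s(\rho)=\int_0^\rho v^{s-1}(v+1)^{-N/2}\,dv$ survives undifferentiated. Your plan says ``the terms should reorganize into $\cG_{s-1}$'', but $\cG_{s-1}$ involves $\Phi_{s-1}(\rho)$, not $\Phi_s(\rho)$, and no amount of algebraic manipulation of rational functions in $p,q,r$ will convert one integral into the other. The mechanism is an integration by parts in the $v$-integral:
\[
\int_0^a \frac{v^{s-1}}{(v+1)^{N/2}}\,dv \;=\; \frac{2}{2s-N}\,\frac{a^{s-1}}{(a+1)^{N/2-1}} \;-\; \frac{2(s-1)}{2s-N}\int_0^a \frac{v^{s-2}}{(v+1)^{N/2}}\,dv,
\]
which, combined with $-\Delta_x|x-y|^{2s-N}=(N-2s)\cdot 2(s-1)\,|x-y|^{2s-2-N}$ and the constant relation $k_{N,s-1}=4(s-1)^2k_{N,s}$ you noted, is precisely what produces $\cG_{s-1}(x,y)$ and leaves a closed-form remainder to be identified with $-4(s-1)k_{N,s}P_{s-1}$. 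Without this identity your ``reduce to a polynomial identity'' strategy cannot start, because $\Phi_s(\rho)$ is not rational.

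A second, smaller point: the identity above is singular when $2s=N$ (and then $|x-y|^{2s-N}\equiv 1$, so $\Delta_x$ of it vanishes). The paper treats this case separately; your plan should flag it as well.
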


The proof of Lemma \ref{Lap:green} is done by an elementary\textemdash but lengthy\textemdash direct computation and for the reader's convenience we give a proof in Appendix \ref{recurrence}.

\begin{remark}\label{knowngreen-ball}\hspace{1em}
\begin{enumerate}
\item For $\sigma=\frac{1}{2}$, $N=1$, the substitution $t=\sqrt{v}$ yields
$G_{1,\frac{1}{2}}(x,y)=\frac{1}{\pi}\ln\left(\frac{1-xy+\sqrt{(1-x^2)(1-y^2)}}{|x-y|}\right),$ which agrees with \cite[Theorem 3.1, formula (3.2)]{B16} and for $s\in\mathbb N$, the change of variables $\tilde v=\sqrt{v+1}$ yields
$\cG_s(x,y)=2k_{N,s}|x-y|^{2s-N}\int_1^{p(x,y)} (v^2-1)^{s-1}v^{1-N}\ dv,$ with $p(x,y)=[x,y]|x-y|^{-1}$,
which is another known expression for Boggio's formula, see \cite{GGS10}.
\item By rescaling we have that Theorem \ref{green:thm} holds in balls of radius $r>0$ using $\rho_r(x,y)=(r^2-|x|^2)(r^2-|y|^2)r^{-2}|x-y|^{-2}$ in place of $\rho$ in \eqref{greenball-intro}.
\end{enumerate}
\end{remark}

\begin{remark}\label{Gsbounds}
The following are well-known estimates for $\cG_s$. They do not play an important role in our proofs, but we state them for completeness.  Let $f,g\geq 0$ be functions defined on the same set $D$. We write $f\preceq g$ if there is $c>0$ such that $f(x)\leq cg(x)$ for all $x\in D$. We write $f \simeq g$ if both $f\preceq g$ and $g\preceq f$.  In $\overline{B}\times \overline{B}$ we have
\begin{align*}
 \cG_s(x,y)\simeq \left\{\begin{aligned}
&|x-y|^{2s-N}\min\Big\{1,\frac{d(x)^sd(y)^s}{|x-y|^{2s}}\Big\},&& \quad \text{ if $N>2s$,}\\
&\ln\Big(1+\frac{d(x)^sd(y)^s}{|x-y|^{2s}}\Big),&& \quad \text{ if $N=2s$,}\\
&d(x)^{s-\frac{N}{2}}d(y)^{s-\frac{N}{2}}\min\left\{1,\frac{d(x)^{\frac{N}{2}}d(y)^{\frac{N}{2}}}{|x-y|^{N}}\right\},&& \quad \text{ if $N<2s$.}\\
\end{aligned}\right.
\end{align*}

These type of estimates are known if $s\in\N\cup(0,1)$, see, for example, \cite{CS98,GGS10}. We refer to \cite[Theorem 4.6]{GGS10}, where the case $s\in\N$ is considered, but the proof carries the fractional case $s>1$.
\end{remark}

The following is a useful auxiliary Lemma.
\begin{lemma}\label{s:arg}
 Let $N\in\mathbb N$, $R,s,r>0$, and $\varepsilon\in(0,\min\{N,s\})$.  Then
 \begin{align*}
  R^{2s-N}\int_0^{\frac{r}{R^2}}\frac{t^{s-1}}{(t+1)^{\frac{N}{2}}}\ dt \leq \frac{2}{s} R^{\varepsilon-N}r^{s-\frac{\varepsilon}{2}}.
 \end{align*}
\end{lemma}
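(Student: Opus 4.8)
The inequality to prove is
\[
R^{2s-N}\int_0^{r/R^2}\frac{t^{s-1}}{(t+1)^{N/2}}\,dt \leq \frac{2}{s}\,R^{\varepsilon-N}r^{s-\varepsilon/2},
\]
and the natural idea is to bound the integrand crudely and compute the resulting elementary integral. The plan is to first observe that since $\varepsilon\in(0,\min\{N,s\})$, we have $s-\tfrac{\varepsilon}{2}>s-\tfrac{\varepsilon}{2}-(\tfrac{N}{2}-\tfrac{\varepsilon}{2})\cdot 1$ in the relevant range, and more usefully that the factor $(t+1)^{-N/2}\leq t^{-\varepsilon/2}$ for $t>0$ whenever $\tfrac{\varepsilon}{2}\leq \tfrac{N}{2}$, which holds here. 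Thus
\[
\int_0^{r/R^2}\frac{t^{s-1}}{(t+1)^{N/2}}\,dt \leq \int_0^{r/R^2} t^{s-1-\varepsilon/2}\,dt = \frac{1}{s-\varepsilon/2}\Big(\frac{r}{R^2}\Big)^{s-\varepsilon/2},
\]
where the integral converges since $s-\tfrac{\varepsilon}{2}>0$ (because $\varepsilon<s<2s$).

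Next I would multiply through by $R^{2s-N}$ to get
\[
R^{2s-N}\int_0^{r/R^2}\frac{t^{s-1}}{(t+1)^{N/2}}\,dt \leq \frac{1}{s-\varepsilon/2}\,R^{2s-N}\,R^{-2s+\varepsilon}\,r^{s-\varepsilon/2} = \frac{1}{s-\varepsilon/2}\,R^{\varepsilon-N}\,r^{s-\varepsilon/2}.
\]
Finally, since $\varepsilon<s$ we have $s-\tfrac{\varepsilon}{2}>\tfrac{s}{2}$, hence $\tfrac{1}{s-\varepsilon/2}<\tfrac{2}{s}$, which yields the claimed bound. The argument is completely elementary; the only points requiring a line of justification are the pointwise bound $(t+1)^{-N/2}\leq t^{-\varepsilon/2}$ (equivalently $t^{\varepsilon/2}\leq(t+1)^{N/2}$, true since $t\leq t+1$ and $\tfrac{\varepsilon}{2}\leq\tfrac{N}{2}$) and the convergence/value of the resulting power integral, together with the constant comparison $s-\tfrac{\varepsilon}{2}>\tfrac{s}{2}$.

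\textbf{Main obstacle.} There is essentially no obstacle here: the statement is a routine estimate designed to be applied later (presumably to control $\cG_s$ near the boundary and establish the decay bound \eqref{decay} and membership in $\cH^s_0(B)$). The only mild subtlety is choosing the right crude bound on $(t+1)^{-N/2}$ so that the exponent of $t$ stays above $-1$ for integrability while still producing the exponent $s-\varepsilon/2$ on $r$ and $\varepsilon-N$ on $R$; the choice $(t+1)^{-N/2}\le t^{-\varepsilon/2}$ does exactly this, and the constraint $\varepsilon<\min\{N,s\}$ is precisely what makes both the pointwise bound and the convergence work.
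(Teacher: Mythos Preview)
Your proof is correct and in fact more direct than the paper's. The paper first performs the change of variables $t\mapsto tR^{-2}$ to write the left-hand side as $R^{\varepsilon-N}\int_0^r t^{s-1}(tR^{\delta-2}+R^\delta)^{-N/2}\,dt$ with $\delta=2\varepsilon/N$, then bounds the denominator from below by minimizing $R\mapsto tR^{\delta-2}+R^\delta$ over $R>0$ (the minimizer is $R_0=k\sqrt{t}$ with $k=\sqrt{(2-\delta)/\delta}$), obtaining a lower bound proportional to $t^{\delta/2}$, and finally integrates the resulting power of $t$. Your single pointwise bound $(t+1)^{-N/2}\le t^{-\varepsilon/2}$ (valid since $t^{\varepsilon/2}\le(t+1)^{\varepsilon/2}\le(t+1)^{N/2}$ for $\varepsilon\le N$) reaches the same conclusion without the auxiliary optimization, and the constant comparison $\tfrac{1}{s-\varepsilon/2}<\tfrac{2}{s}$ is identical to the paper's final step. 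As a bonus, your argument transparently covers the full range $\varepsilon\in(0,\min\{N,s\})$ stated in the lemma, whereas the paper's parametrization $\varepsilon=N\delta/2$ with $\delta\in(0,1)$ formally yields only $\varepsilon<N/2$.
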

\begin{proof}
Let $\delta\in(0,1)$ such that $\varepsilon:=\frac{N\delta}{2}\in(0,\min\{N, s\})$. By a change of variables we have that
 \begin{align*}
  R^{2s-N}\int_0^{\frac{r}{R^2}}\frac{t^{s-1}}{(t+1)^{\frac{N}{2}}}\ dt=R^{-N}\int_0^{r}\frac{t^{s-1}}{(tR^{-2}+1)^{\frac{N}{2}}}\frac{R^{\varepsilon}}{R^{\varepsilon}}\ dt
  =R^{\varepsilon-N}\int_0^{r}\frac{t^{s-1}}{(tR^{\delta-2}+R^\delta)^{\frac{N}{2}}}\ dt.
 \end{align*}
Note that the function $R\mapsto tR^{\delta-2}+R^\delta$ has a unique minimum in $(0,\infty)$ at $R_0=k\sqrt{t}$ with $k=\sqrt{\frac{2-\delta}{\delta}}$. Therefore
\begin{align*}
  R^{\varepsilon-N}\int_0^{r}\frac{t^{s-1}}{(tR^{\delta-2}+R^\delta)^{\frac{N}{2}}}\ dt&\leq
  R^{\varepsilon-N}\int_0^{r}\frac{t^{s-1}}{(tR_0^{\delta-2}+R_0^\delta)^{\frac{N}{2}}}\ dt
  =R^{\varepsilon-N}\int_0^{r}\frac{t^{s-1}}{(t^{\frac{\delta}{2}}(k^{\delta-2}+k^\delta))^{\frac{N}{2}}}\ dt\\
  &\leq R^{\varepsilon-N}\int_0^{r}\frac{t^{s-1-\frac{\varepsilon}{2}}}{k^\varepsilon}\ dt=\frac{k^{-\varepsilon}}{s-\frac{\varepsilon}{2}}R^{\varepsilon-N}r^{s-\frac{\varepsilon}{2}}
  \leq \frac{2}{s} R^{\varepsilon-N}r^{s-\frac{\varepsilon}{2}},
\end{align*}
since $\varepsilon<s$ and  $k^{-\varepsilon}= \frac{\delta^\frac{\varepsilon}{2}}{(2-\delta)^\frac{\varepsilon}{2}}
 \leq \delta^\frac{\varepsilon}{2} \leq \delta^\frac{N\delta}{4}\leq 1,$ because $\delta\in(0,1)$.
\end{proof}

\subsection{Interior and boundary regularity}

\begin{lemma}\label{lpreg}
Let $1\leq p\leq \infty$, $s>0$, $f\in L^p(B)$, and $u$ as in \eqref{Gsu}. There is $C=C(N,s,p)>0$ such that $\|u\|_{L^p(B)}\leq C\|f\|_{L^p(B)}$.
\end{lemma}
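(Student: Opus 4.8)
\textbf{Proof plan for Lemma \ref{lpreg}.}

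The plan is to prove the estimate by viewing $u$ as an integral operator with kernel $\cG_s(x,y)$ and applying Young's inequality (or, more precisely, the Schur test), so the crucial point is a uniform bound on $\int_B\cG_s(x,y)\,dy$ and $\int_B\cG_s(x,y)\,dx$. First I would observe that from the definition \eqref{greenball-intro}, since $\rho(x,y)\le \frac{(1-|y|^2)_+}{|x-y|^2}\cdot 1$ when, say, $|x|\le|y|$ (and symmetrically otherwise), one gets the elementary pointwise bound
\begin{align*}
0\le\cG_s(x,y)\le k_{N,s}\,|x-y|^{2s-N}\int_0^{\frac{(1-|y|^2)_+}{|x-y|^2}}\frac{v^{s-1}}{(v+1)^{N/2}}\,dv,
\end{align*}
to which Lemma \ref{s:arg} applies with $R=|x-y|$ and $r=(1-|y|^2)_+\le 1$: choosing any fixed $\varepsilon\in(0,\min\{N,s\})$ yields $\cG_s(x,y)\le C\,|x-y|^{\varepsilon-N}$ for all $x,y\in B$. (For $N<2s$ one can alternatively use the last line of Remark \ref{Gsbounds}, but the above is self-contained and uniform in dimension.)

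Next I would note that $|x-y|^{\varepsilon-N}\in L^1(B-B)$ since $\varepsilon-N>-N$, hence
\begin{align*}
\sup_{x\in B}\int_B \cG_s(x,y)\,dy\le C\sup_{x\in B}\int_B|x-y|^{\varepsilon-N}\,dy\le C\int_{B_2(0)}|z|^{\varepsilon-N}\,dz=:C'<\infty,
\end{align*}
and by the symmetry $\cG_s(x,y)=\cG_s(y,x)$ the same bound holds for $\sup_y\int_B\cG_s(x,y)\,dx$. With both marginal integrals bounded by $C'$, the Schur test gives $\|u\|_{L^p(B)}\le C'\|f\|_{L^p(B)}$ for all $p\in[1,\infty]$: explicitly, for $p\in(1,\infty)$ write $\cG_s(x,y)=\cG_s(x,y)^{1/p'}\cG_s(x,y)^{1/p}$, apply H\"older in $y$, and integrate the $x$-power; the cases $p=1$ and $p=\infty$ are Tonelli and a direct sup bound respectively.

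The main (and really only) obstacle is establishing the uniform kernel bound $\cG_s(x,y)\le C|x-y|^{\varepsilon-N}$ cleanly for \emph{all} $s>0$ and all $N$, including the borderline regimes $N=2s$ and $N<2s$ where $\cG_s$ is bounded rather than singular; here the virtue of routing everything through Lemma \ref{s:arg} is that it handles these cases uniformly, trading the (possibly nonexistent) singularity for the harmless power $|x-y|^{\varepsilon-N}$ at the cost of the constant $\frac{2}{s}$. Everything after that is the standard Schur-test boilerplate and requires no domain-specific input.
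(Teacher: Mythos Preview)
Your proof is correct and follows essentially the same approach as the paper: the key step in both is the uniform bound $\sup_{x\in B}\int_B\cG_s(x,y)\,dy<\infty$ obtained from Lemma~\ref{s:arg}, after which the paper concludes via Jensen's inequality with the probability measure $\cG_s(x,y)/\zeta(x)\,dy$ while you conclude via the Schur test---these are two packagings of the same interpolation argument for nonnegative kernels with bounded marginals. Your intermediate pointwise bound $\cG_s(x,y)\le C|x-y|^{\varepsilon-N}$ is slightly more explicit than the paper's direct marginal estimate, but the content is identical.
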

\begin{proof}
For $x\in B$ let $\zeta(x):=\int_{B} \cG_s(x,y)dy=\int_{B} \cG_s(y,x)dy>0$. Note that $C:=\|\zeta\|_{L^\infty (B)}<\infty$, by Lemma \ref{s:arg} or by Remark \ref{Gsbounds}.
Hence, the statement holds for $p=\infty$. For $p<\infty$, by Jensen's inequality,
 \begin{align*}
  \|u\|^p_{L^p(B)}&=\int_B\Bigg|\zeta(x)\int_B f(y)\frac{\cG_s(x,y)}{\zeta(x)}\ dy\Bigg|^p\ dx\leq \int_B\zeta(x)^{p}\int_B |f(y)|^p \frac{\cG_s(x,y)}{\zeta(x)}\ dydx\\
  &=\int_B|f(y)|^p\int_B\zeta(x)^{p-1}\cG_s(x,y)\ dxdy\leq C^{p-1}\int_B|f(y)|^p\zeta(y)dy \leq C^{p}\|f\|^p_{L^p(B)}<\infty.
 \end{align*}
\end{proof}

\begin{lemma}\label{propertiesp}
Let $s>1$, $1< p\leq \infty$, $f\in L^p(B)$, and $v(x):=\int_BP_{s-1}(x,y)f(y)\ dy$, $x\in B$. 
If $p>\frac{N}{s}$, then $v\in C^{\infty}(B)$ and for all $\alpha\in \N_0^{N}$ there is $C=C(N,s,\alpha)>0$ 
\begin{equation}\label{der:v}
\|d^{2-s+|\alpha|}\partial^{\alpha}v\|_{L^\infty(B)}\leq C \|f\|_{L^p(B)}.
\end{equation}
\end{lemma}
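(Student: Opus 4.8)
The statement concerns the function $v(x)=\int_B P_{s-1}(x,y)f(y)\,dy$ with $P_{s-1}$ given by \eqref{Psm1}, and we must show interior smoothness plus the weighted derivative bound \eqref{der:v} when $p>N/s$. First I would record the structure of the kernel: up to the harmless factor $(1-|x|^2)_+^{s-2}$ (which is smooth and positive on any interior ball $B_{1-\delta}$), $P_{s-1}(x,y)$ equals $(1-|y|^2)_+^{s-1}(1-|x|^2|y|^2)[x,y]^{-N}$, and the only source of singular behavior in $x$ is the factor $[x,y]^{-N}=(|x|^2|y|^2-2x\cdot y+1)^{-N/2}$. The key elementary fact is that $[x,y]\geq c(1-|x|)(1-|y|)^{1/2}$ type bounds fail, but rather $[x,y]^2\geq (1-|x||y|)^2\geq (1-|x|)^2$ and also $[x,y]\gtrsim |x-y|$ on $\overline B\times\overline B$; more precisely $[x,y]^2=|x-y|^2+(1-|x|^2)(1-|y|^2)$, so $[x,y]$ never vanishes for $x\in B$, $y\in\overline B$ unless $x=y\in\partial B$. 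Hence for $x$ in a fixed interior ball the integrand and all its $x$-derivatives are bounded uniformly in $y$, and differentiation under the integral sign gives $v\in C^\infty(B)$; this handles the qualitative claim.

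\emph{The main work is the boundary bound} \eqref{der:v}, i.e. controlling $\partial^\alpha v$ by $d(x)^{s-2-|\alpha|}$ as $x\to\partial B$. Differentiating $[x,y]^{-N}$ in $x$ produces terms of the form (polynomial in $x,y$)$\cdot[x,y]^{-N-2|\alpha|}$, and differentiating the other $x$-dependent factors $(1-|x|^2)^{s-2}$ and $(1-|x|^2|y|^2)$ only improves things or contributes lower-order singularities (the factor $(1-|x|^2)^{s-2}$ has $x$-derivatives of size $d(x)^{s-2-j}$, which is exactly the claimed rate when combined with a bounded remaining integral). So, schematically,
\[
|\partial^\alpha v(x)|\ \lesssim\ \sum_{j=0}^{|\alpha|} d(x)^{s-2-j}\int_B \frac{(1-|y|^2)^{s-1}_+}{[x,y]^{N+2(|\alpha|-j)}}\,|f(y)|\,dy,
\]
and after a Hölder estimate with exponent $p$ it remains to bound $\big\||y|\mapsto(1-|y|^2)^{s-1}[x,y]^{-N-2k}\big\|_{L^{p'}(B)}$ by $d(x)^{s-2-|\alpha|}\,d(x)^{j-|\alpha|\cdot 0}$-type powers — the arithmetic being arranged so that the total power of $d(x)$ comes out to $s-2-|\alpha|$. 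The cleanest route is the change of variables / scaling estimate already in the paper: after substituting, the integral $\int_B (1-|y|^2)^{s-1}_+[x,y]^{-N-2k}\,dy$ reduces to a one-dimensional integral of the type handled by Lemma \ref{s:arg} (with $R\simeq d(x)$), producing a bound $\lesssim d(x)^{s-2k}$ up to an $\varepsilon$-loss absorbed by the hypothesis $p>N/s$ (which guarantees $p'<N/(N-s)$, i.e. that the singular power is $L^{p'}$-integrable near the diagonal on $\partial B$).

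\emph{Where the difficulty lies.} The qualitative $C^\infty$ part is routine dominated-convergence bookkeeping. The genuine obstacle is \textbf{getting the exponent of $d(x)$ exactly right} in the weighted bound, i.e. tracking how each $x$-derivative either lands on $(1-|x|^2)^{s-2}$ (costing one power of $d(x)$) or on $[x,y]^{-N}$ (raising the kernel singularity by $2$, which after integration costs \emph{two} powers of $d(x)$ in the $y$-integral but is compensated by the remaining $d(x)$-free factor) — the combinatorics must be organized so that every one of the $|\alpha|$ derivatives nets exactly one extra negative power of $d(x)$, uniformly. The standard way to make this airtight is to split $B=B\cap\{|x-y|\le d(x)/2\}\,\cup\,B\cap\{|x-y|>d(x)/2\}$: on the near region $[x,y]\simeq d(x)$ and $(1-|y|^2)\simeq d(x)$, so the integrand is $\simeq d(x)^{s-1}d(x)^{-N-2k}$ over a ball of radius $\simeq d(x)$, contributing $d(x)^{s-1-2k}$; on the far region $[x,y]\simeq|x-y|$ and one integrates $(1-|y|^2)^{s-1}|x-y|^{-N-2k}$, again yielding the right power after using $1-|y|^2\le[x,y]\cdot(\text{const})$ and Lemma \ref{s:arg}. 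Combining with the Hölder step and the $d(x)^{s-2-j}$ prefactor from differentiating $(1-|x|^2)^{s-2}$ gives \eqref{der:v}. I would write the proof along these lines, invoking Lemma \ref{s:arg} for the decisive scaling estimate and otherwise only elementary calculus.
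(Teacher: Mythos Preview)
Your qualitative argument for $v\in C^\infty(B)$ is fine. The gap is in the weighted derivative bound. You write $\partial_x^\alpha([x,y]^{-N})$ as a bounded polynomial times $[x,y]^{-N-2|\alpha|}$ and then try to recover the lost powers via a near/far split and Lemma~\ref{s:arg}. This overcounts the singularity: since $[x,y]=\big|\,|y|x-\tfrac{y}{|y|}\,\big|$ for $y\neq 0$, one has $|\nabla_x[x,y]|\leq|y|\leq 1$, so in fact $|\partial_x^\alpha([x,y]^{-N})|\leq C_\alpha[x,y]^{-N-|\alpha|}$. With your crude exponent $-N-2k$ the bookkeeping does not close: trading $[x,y]^{-2k}\lesssim d(x)^{-2k}$ leaves $d(x)^{s-2-j-2k}$ rather than the required $d(x)^{s-2-|\alpha|}$, and your near/far split does not repair this (on the far set one only has $[x,y]\geq|x-y|$, not $[x,y]\simeq|x-y|$; and Lemma~\ref{s:arg} is about the integral defining $\cG_s$, it is not used for $P_{s-1}$).

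The paper's route is shorter and avoids any dyadic split. From $[x,y]\geq 1-|x||y|$ one gets simultaneously $(1-|y|^2)\leq 2[x,y]$, $(1-|x|^2|y|^2)\leq 2[x,y]$, and $(1-|x|^2)\leq 2[x,y]$. The first two absorb the $y$-weights, yielding $P_{s-1}(x,y)\leq C(1-|x|^2)^{s-2}[x,y]^{s-N}$. The paper then proves the geometric lower bound $[x,y]\geq C\big|y-\tfrac{x}{|x|}\big|$ for $|x|>\tfrac34$ (this is \eqref{bound-bracket}), which makes $\int_B[x,y]^{(s-N)q}\,dy$ uniformly bounded precisely when $s-N+\tfrac{N}{q}=s-\tfrac{N}{p}>0$, i.e.\ under the hypothesis $p>N/s$; this is where that hypothesis enters. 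For higher derivatives, each extra factor $[x,y]^{-1}$ is converted to $d(x)^{-1}$ via the third inequality, so the same bounded integral reappears and one gains exactly one negative power of $d(x)$ per derivative. If you want to salvage your outline, first establish $|\partial_x^\alpha([x,y]^{-N})|\lesssim[x,y]^{-N-|\alpha|}$ and replace the near/far split by the two pointwise bounds $[x,y]\gtrsim(1-|x|^2)$ and $[x,y]\gtrsim\big|y-\tfrac{x}{|x|}\big|$.
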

\begin{proof}
In the following let $C_i=C_i(N,s,p)>0$, $i=1,2,\ldots$ be constants. Let $x,y\in B$, then 
\begin{align}\label{bra:bd}
[x,y]=\sqrt{|x|^2|y|^2-2x\cdot y+1}\geq 1-|x||y|\geq 1-|y|\geq \frac{1}{2}(1-|y|^2),
\end{align}
and therefore $P_{s-1}(x,y)\leq (1-|x|^2)^{s-2} C_1[x,y]^{s-N}$ for $s>1$. Moreover,
\begin{equation}\label{bound-bracket}
[x,y]\geq C_2\Big|y-\frac{x}{|x|}\Big| \quad\text{ for all $x\in B\setminus B_{\frac{3}{4}}(0)$.}
\end{equation}
Indeed, denote $|x|=r$, $\theta=\frac{x}{|x|}$ and note that $[r\theta,y]=|ry-\theta|$ and, for $r>3/4$,
\begin{align*}
|ry-\theta|^2 &= |(r-1)y+y-\theta|^2= (1-r)^2|y|^2-2(1-r)\langle y,y-\theta\rangle+|y-\theta|^2\\
&\geq -2(1-r)\langle \theta,y-\theta\rangle-2(1-r)|y-\theta|^2+|y-\theta|^2 \\
&\geq -2(1-r)|y|+2(1-r)-2(1-r)|y-\theta|^2+|y-\theta|^2 \\
&\geq -2(1-r)|y-\theta|^2+|y-\theta|^2= \frac{|y-\theta|^2}{2},
\end{align*}
which implies (\ref{bound-bracket}). Note that \eqref{bound-bracket} gives that there is $C_3>0$ such that
\begin{equation}\label{bound-bracket2}
\sup_{x\in B} \int_B [x,y]^{s-N}\ dy\leq C_3.
\end{equation}
Next, let $f\in L^p(B)$, $p\in(1,\infty]$, $s>\frac{N}{p}$, and define $v(x)=\int_BP_{s-1}(x,y)f(y)\ dy$  for $x\in B$. Note that for every $\alpha\in \N_0^N$ there is $C=C(\alpha)>0$ such that $|\partial^{\alpha}v(x)|\leq C(\alpha)\|f\|_{L^p(B)}$ for all $x\in B_{\frac{3}{4}}(0)$. Moreover, for $|x|>\frac{3}{4}$ we have with $q=\frac{p}{p-1}$ for $p<\infty$ and $q=1$ for $p=\infty$
\begin{align*}
|v(x)|&\leq (1-|x|^2)^{s-2}\|f\|_{L^p(B)}\Bigg(\int_B(1-|y|^2)^{(s-1)q}(1-|x|^2|y|^2)^{q}[x,y]^{-Nq}\ dy\Bigg)^{\frac{1}{q}}\\
&\leq 2^s(1-|x|^2)^{s-2}\|f\|_{L^p(B)}\Bigg(\int_B[x,y]^{(s-N)q}\ dy\Bigg)^{\frac{1}{q}}\leq C_4(1-|x|^2)^{s-2}\|f\|_{L^p(B)},
\end{align*}
since $(s-N)+\frac{N}{q}=s-\frac{N}{p}>0$ and using  \eqref{bra:bd} and (\ref{bound-bracket2}). Arguing similarly one can obtain \eqref{der:v} for derivatives of order $k$, since terms of the form $(1-|x|^2)^{s-2}[x,y]^{-N-k}$ can be bounded by $(1-|x|^2)^{s-2-k}[x,y]^{-N}$. Thus, proceeding as above, $|\partial^{\alpha} v(x)|\leq C_5 \|f\|_{L^p(B)}(1-|x|^2)^{s-2-|\alpha|}$ for all $i\in\{1,\ldots,N\}$, and the Lemma follows.
\end{proof}

\begin{prop}\label{bd:reg}
Let $1\leq p\leq \infty$, $k\in \R$, $s>0$, $f:B\to\R$ such that $d^kf\in L^{p}(B)$, and $u$ as in \eqref{Gsu}. If $s>k$, then there is $C=C(N,s,k,p)>0$ such that $\|d^{-s}u\|_{L^p(B)}\leq C\|d^kf\|_{L^{p}(B)}.$
\end{prop}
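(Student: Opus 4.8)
The plan is to recast the inequality as a Schur test for a nonnegative integral kernel on $B$. Put $g:=d^kf$ and, for $x,y\in B$, $K(x,y):=d(x)^{-s}\cG_s(x,y)d(y)^{-k}\ge0$, so that $d(x)^{-s}u(x)=\int_BK(x,y)\,g(y)\,dy$. Then $\|d^{-s}u\|_{L^p(B)}\le C\|d^kf\|_{L^p(B)}$ for \emph{every} $p\in[1,\infty]$, with $C$ independent of $p$, will follow from the two Schur bounds $\sup_{x\in B}\int_BK(x,y)\,dy<\infty$ and $\sup_{y\in B}\int_BK(x,y)\,dx<\infty$ by the standard argument (for $1<p<\infty$ apply H\"older inside the $y$-integral against $K=K^{1/p'}K^{1/p}$, or interpolate the cases $p=1$ and $p=\infty$). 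Since $\cG_s$ is symmetric, the two Schur bounds are precisely the pointwise estimates
\[
\int_B\cG_s(x,y)\,d(y)^{-k}\,dy\preceq d(x)^{s}\quad(x\in B),\qquad\int_B\cG_s(x,y)\,d(x)^{-s}\,dx\preceq d(y)^{k}\quad(y\in B),
\]
and these are what I would actually prove. (The integral defining $u$ converges a.e.\ and $u\equiv0$ on $\R^N\setminus B$, since $\cG_s(x,\cdot)\in L^1(B)$ and $\cG_s(x,y)=0$ whenever $x\notin B$; cf.\ Lemma~\ref{lpreg}.)

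For both estimates I would use the two-sided bounds for $\cG_s$ from Remark~\ref{Gsbounds}: all I need are the upper bound $\cG_s(x,y)\preceq d(x)^sd(y)^s|x-y|^{-N}$ (valid in every regime, being one of the two terms of the minimum, and since $\ln(1+t)\le t$ when $N=2s$) together with $\cG_s(x,y)\preceq|x-y|^{2s-N}$ if $N>2s$, $\cG_s(x,y)\preceq d(x)^{s-\frac N2}d(y)^{s-\frac N2}$ if $N<2s$, and $\cG_s(x,y)\preceq\ln\!\big(1+d(x)^sd(y)^s|x-y|^{-2s}\big)$ if $N=2s$ — all of which also come straight out of Lemma~\ref{s:arg} and Boggio's formula \eqref{greenball-intro}. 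Fix $x\in B$, write $\delta:=d(x)$, and split $B$ at $|x-y|=\delta/2$. On $\{|x-y|\le\delta/2\}$ one has $d(y)\simeq\delta$, and in each of the three regimes the second bound gives $\int_{\{|x-y|\le\delta/2\}}\cG_s(x,y)\,d(y)^{-k}\,dy\preceq\delta^{-k}\delta^{2s}=\delta^{2s-k}\le\delta^{s}$, using $s>k$ and $\delta\le1$. On $\{|x-y|>\delta/2\}$ the first bound gives a contribution $\preceq\delta^{s}\int_{\{|x-y|>\delta/2\}}d(y)^{s-k}|x-y|^{-N}\,dy$; decomposing this integral dyadically in $|x-y|$ and using $\int_{B_{2r}(x)}d(y)^{s-k}\,dy\preceq r^{N+s-k}$ for $r\ge\delta$, each shell of radius $r$ contributes $\preceq r^{s-k}$, and since $s-k>0$ the resulting geometric sum over the shells is $\preceq1$. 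This proves the first estimate. The second is entirely analogous, except that in the far part there is no spare positive power of $d(x)$, so the dyadic sum produces a harmless factor $\ln(e/d(y))$, which is absorbed through $d(y)^{s}\ln(e/d(y))\preceq d(y)^{k}$ — valid, once again, precisely because $s>k$.

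The genuinely delicate point, and the only real use of the hypothesis, is that all these boundary integrals converge \emph{with the sharp exponent $s$ and, in the first estimate, with no surviving logarithm} exactly because $s>k$ is strict: for $s=k$ one would pick up an extra logarithm and the bound would fail. Everything else is a lengthy but elementary bookkeeping, running the near-diagonal/far-field split separately in the three regimes $N<2s$, $N>2s$, $N=2s$.
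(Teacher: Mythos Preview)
Your argument is correct. The Schur bounds reduce to the two pointwise estimates you state, and your near/far dyadic analysis goes through cleanly: the only genuinely needed upper bounds are $\cG_s(x,y)\preceq d(x)^sd(y)^s|x-y|^{-N}$ (for the far part) together with the ``regularizing'' bound near the diagonal ($|x-y|^{2s-N}$, $d(x)^{s-N/2}d(y)^{s-N/2}$, or the logarithmic one), and all of these are immediate from Boggio's formula. The logarithm in the second Schur bound is indeed killed precisely by the strict inequality $s>k$.

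The paper proceeds differently. Instead of the sharp two-regime bounds and a dyadic decomposition, it uses Lemma~\ref{s:arg} to produce, for every small $\varepsilon>0$, the single interpolated estimate
\[
\cG_s(x,y)\ \preceq\ |x-y|^{\varepsilon-N}\,d(x)^{\,s-\varepsilon/2}\,d(y)^{\,s-\varepsilon/2},
\]
then, for $1\le p<\infty$, chooses $\varepsilon<\min\{1,s-k,2/p\}$ and runs a direct H\"older argument against the convolution-type kernel $|x-y|^{\varepsilon-N}$; the case $p=\infty$ is treated by a separate explicit computation. The advantage of the paper's route is that it avoids any case distinction on $N$ versus $2s$ and any dyadic bookkeeping: one line does everything. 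The advantage of your route is that the Schur test gives all $p\in[1,\infty]$ at once with a constant independent of $p$ (the paper's constant depends on $p$ through the choice of $\varepsilon$), and it makes transparent exactly where and why the strict inequality $s>k$ is used.
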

\begin{proof}
	First, note that given $\epsilon>0$ there is $C=C(\epsilon)>0$ such that $\int_{B}|x-y|^{\epsilon-N}d(x)^{-p\frac{\epsilon}{2}}\ dx\leq C$ for all $y\in B$ and $p< \frac{2}{\epsilon}$. In the following let $C_i=C_i(N,s,p,k)>0$, $i=1,2,\ldots$ be constants. First let $1\leq p<\infty$ and fix $0<\epsilon<\min\{1,s-k,\frac{1}{p}\}$. Then, by Lemma \ref{s:arg} and H\"older's inequality,
\begin{align*}
	\|d^{-s}u\|_{L^p(B)}^p&\leq C_1\int_B\Bigg(\int_B|x-y|^{\epsilon-N} d(x)^{-\frac{\epsilon}{2}}d^{s-k-\frac{\epsilon}{2}}(y)d^k(y)|f(y)|\ dy\Bigg)^p\ dx\\
	&\leq C_2\int_B\Bigg(\int_B|x-y|^{\epsilon-N} d(x)^{-\frac{\epsilon}{2}}d^k(y)|f(y)|\ dy  \Bigg)^p\ dx\\
	&\leq C_3\int_B \Bigg(\int_{B}|x-y|^{\epsilon-N}\ dy\Bigg)^{p-1} \Bigg( \int_{B} d(x)^{-\frac{p\epsilon}{2}} |x-y|^{\epsilon-N}d^{kp}(y)|f(y)|^p\ dy\Bigg)\ dx\\
	&\leq C_4\int_B  \int_{B} d(x)^{-p\frac{\epsilon}{2}} |x-y|^{\epsilon-N}d^{kp}(y)|f(y)|^p\ dy\ dx\\
	&=C_5\int_B  d^{kp}(y)|f(y)|^p \int_{B} d(x)^{-p\frac{\epsilon}{2}} |x-y|^{\epsilon-N}\ dx\ dy\leq C_6\|d^kf\|_{L^p(B)}.
\end{align*}
Next let $p=\infty$, $x\in \R^N\backslash\{0\}$. Then
\begin{align*}
 |d^{-s}(x)u(x)|&\leq k_{N,s}\|d^kf\|_{L^\infty(B)}d^{-s}(x)\int_B |x-y|^{2s-N}d^{-k}(y)\int_0^{\frac{(1-|x|^2)(1-|y|^2)}{|x-y|^2}}\frac{t^{s-1}}{(t+1)^{\frac{N}{2}}}\ dtdy\\
 &\leq 2^sk_{N,s}\|d^kf\|_{L^\infty(B)}\int_B |x-y|^{2s-N}d^{s-k}(y)\int_0^{|x-y|^{-2}}\frac{t^{s-1}}{((1-|y|^2)(1-|x|^2)t+1)^{\frac{N}{2}}}\ dtdy\\
  &\leq 2^sk_{N,s}\|d^kf\|_{L^\infty(B)}\int_B |x-y|^{2s-N}\int_0^{|x-y|^{-2}}d^{s-k}(y)\frac{t^{s-1}}{((1-|y|^2)t+1)^{\frac{N}{2}}}\ dtdy.
\end{align*}
Furthermore,

\begin{align*}
\int_B |x-y|^{2s-N}&\int_0^{|x-y|^{-2}}d^{s-k}(y)\frac{t^{s-1}}{((1-|y|^2)t+1)^{\frac{N}{2}}}\ dtdy\\
&\leq \int_B|x-y|^{2s-N}\ dy +\int_B|x-y|^{2s-N}d^{s-k}(y)\int_1^{\max\{|x-y|^{-2},1\}} \frac{t^{s-1}}{((1-|y|^2)t+1)^{\frac{N}{2}}}\ dt \ dy\\
&\leq C_7+\int_B\int_{\min\{1,|x-y|^{2}\}}^{1}d^{s-k}(y) \frac{t^{s-1}}{((1-|y|^2)t+|x-y|^2)^{\frac{N}{2}}}\ dt \ dy\\
&\leq C_7+\int_Bd^{s-k}(y)\int_{0}^{1} \frac{1}{((1-|y|^2)t+|x-y|^2)^{\frac{N}{2}}}\ dt \ dy\\
&\leq C_8+C_8\int_Bd^{s-k-1}(y) \Bigg|((1-|y|^2)t+|x-y|^2)^{1-\frac{N}{2}}\Bigg|_{0}^1\Bigg| \ dy\\
&\leq C_9+C_9\int_Bd^{s-k-1}(y) |x-y|^{2-N} \ dy<\infty.
\end{align*}

Hence the statement also holds for $p=\infty$.
\end{proof}

The following remarks are used in the proof of Theorem \ref{reg-ball-a} below.
\begin{remark}\label{h-spaces}
For $s\in \R$ let $H^s(B)$ and $\cH^s_0(B)$ as in Section \ref{Notation}.  
\begin{enumerate}
  \item For every $s\geq 0$ and $u:\R^N\to\R$ with $u\equiv 0$ in $\R^N\backslash B$, there is $k>0$ such that
 \begin{align}\label{eq:norm}
 k\|u\|^2_{\cH^{s}_0(B)}\leq \|u\|_{H^s(B)}^2+\|d^{-s}u\|_{L^2(B)}^2\leq \frac{1}{k}\|u\|^2_{\cH^{s}_0(B)},
 \end{align}
 see \cite[Section 4.3.2, eq. (7)]{T78}. 
\item

By \cite[Section 5.7.1 page 402]{T78}, the Laplacian with Dirichlet boundary conditions gives an isomorphic mapping from $H^{2+s}(B)$ onto $H^{s}(B)$ for all $-1<s<\infty$, $s\neq -\frac{1}{2}$, and therefore,
\begin{equation}\label{G1-map2}
\cG_1: H^s(B)\to H^{s+2}(B)\quad \text{ for all }s>-1, \ s\neq-\frac{1}{2}.
\end{equation}

\item  Let $(\cH^s_0(B))'$ denote the dual space of $\cH^s_0(B)$. Then, by \cite[Theorem 2.10.5/1]{T78} (see also \cite{MN15}),
\begin{align}\label{d:em}
(\cH^s_0(B))'= H^{-s}(B)\qquad \text{ for } s\in\R,
\end{align}

\end{enumerate}


\end{remark}

\begin{thm}\label{reg-ball-a} Let $s>0$, $f\in C^{\alpha}(B)$ for some $\alpha\in(0,1)$, and $u$ as in \eqref{Gsu}. Then 
\begin{align*}
u\in C^{2s+\alpha}_{loc}(B)\cap C^s_0(B) \cap \cH_0^s(B). 
\end{align*}
\end{thm}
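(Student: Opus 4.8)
The goal is three regularity assertions about $u(x)=\int_B\cG_s(x,y)f(y)\,dy$ with $f\in C^\alpha(B)$: interior smoothness $u\in C^{2s+\alpha}_{loc}(B)$, boundary decay $u\in C^s_0(B)$, and $u\in\cH^s_0(B)$. I would argue by induction on $m=\lfloor s\rfloor$, using the recurrence $-\Delta_x\cG_s(x,y)=\cG_{s-1}(x,y)-k_{N,s}4(s-1)P_{s-1}(x,y)$ from Lemma~\ref{Lap:green}. Writing $v(x):=\int_B P_{s-1}(x,y)f(y)\,dy$ and $w(x):=\int_B\cG_{s-1}(x,y)f(y)\,dy$, one gets $-\Delta u=w-k_{N,s}4(s-1)v$ in $B$, so $u=\cG_1(w-k_{N,s}4(s-1)v)$ modulo a harmonic correction which vanishes because all three functions vanish on $\R^N\setminus B$ and $\cG_1$ inverts $-\Delta$ with Dirichlet data. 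The base case $s\in(0,1]$ is the classical regularity theory for the fractional Laplacian in a ball (Lipschitz/Hölder estimates for the Riesz/Boggio kernel), which I would cite.

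\textbf{Interior regularity.} For the inductive step, $w\in C^{2(s-1)+\alpha}_{loc}(B)$ by the inductive hypothesis, and $v\in C^\infty(B)$ by Lemma~\ref{propertiesp} (since $P_{s-1}$ is smooth off the diagonal and the diagonal singularity is integrable with room to spare). Hence $-\Delta u\in C^{2(s-1)+\alpha}_{loc}(B)=C^{2s-2+\alpha}_{loc}(B)$, and interior elliptic (Schauder) regularity for the Laplacian gives $u\in C^{2s+\alpha}_{loc}(B)$. One subtlety is when $2(s-1)+\alpha$ is a non-integer versus when the inductive hypothesis lands on a non-Hölder exponent; I would phrase the Schauder step as "two more derivatives in the Hölder scale" and check it is consistent with $s\in\N$ as well (the statement should also recover the polyharmonic case).

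\textbf{Boundary decay $u\in C^s_0(B)$ and $u\in\cH^s_0(B)$.} For the decay, I would combine the pointwise bound $|d^{-s}u|\le C\|f\|_\infty$ from Proposition~\ref{bd:reg} (applied with $k=0$, valid since $s>0$) with the interior smoothness to upgrade to $u\in C^s_0(B)$, i.e. $u/d^s$ extends Hölder-continuously up to $\partial B$; this is the standard barrier-plus-interior-estimate argument, comparing $u$ near $\partial B$ with multiples of $\psi_{r,x_0}(x)\simeq d^s$ from Corollary~\ref{sol:ball}. Finally, $u\in\cH^s_0(B)$: by Remark~\ref{h-spaces}(1) it suffices to show $u\in H^s(B)$ and $d^{-s}u\in L^2(B)$. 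The latter is immediate from Proposition~\ref{bd:reg} (take $p=2$, $k=0$). For $u\in H^s(B)$, I would use the recurrence and the mapping property \eqref{G1-map2}: $u=\cG_1 F$ with $F=w-k_{N,s}4(s-1)v$, so if $F\in H^{s-2}(B)$ then $u\in H^s(B)$; $w\in H^{s-2}(B)$ should follow from the inductive hypothesis applied to the kernel $\cG_{s-1}$ together with duality \eqref{d:em}, and $v\in H^{s-2}(B)$ (indeed much better) from Lemma~\ref{propertiesp}. When $s-\tfrac12\in\N$ one must instead invoke interpolation (as the text flags, using \cite{T78,Lototsky}) to bridge the $s'\neq-\tfrac12$ restriction in \eqref{G1-map2}; alternatively one treats the half-integer cases directly by interpolating between the neighboring ones.

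\textbf{Main obstacle.} I expect the delicate point to be the $\cH^s_0$ membership rather than the interior regularity: the function $u$ genuinely has derivatives blowing up at $\partial B$ (terms like $d^{s-2}$ when $s\in(1,\tfrac32)$, which are not in $L^2$), so one cannot naively differentiate $2m$ times and integrate. The clean route is to stay in the fractional Sobolev scale via \eqref{G1-map2}/\eqref{d:em} and interpolation, carefully tracking the forbidden exponent $s'=-\tfrac12$ and the half-integer values of $s$; keeping the induction compatible with both the fractional and the integer cases is the part that needs the most care.
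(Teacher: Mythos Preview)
Your overall strategy matches the paper's: induction on $s$ via the recurrence from Lemma~\ref{Lap:green}, the splitting $u=\cG_1 w - c\,\cG_1 v$, interior Schauder for $C^{2s+\alpha}_{loc}$, Proposition~\ref{bd:reg} for the $d^s$ boundary decay, and Remark~\ref{h-spaces} to reduce $\cH^s_0$ membership to $u\in H^s(B)$.

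The gap is the clause ``$v\in H^{s-2}(B)$ (indeed much better) from Lemma~\ref{propertiesp}''. This is not true for $s\in(1,\tfrac32)$: Lemma~\ref{propertiesp} only gives $|v(x)|\lesssim d(x)^{s-2}$, and $d^{s-2}\notin L^2(B)$ in that range, so $v$ lies in no nonnegative Sobolev space---it is \emph{barely} in $H^{s-2}(B)$, and seeing this requires the duality $(\cH^{2-s}_0(B))'=H^{s-2}(B)$ from \eqref{d:em}, testing $v$ against $\varphi\in\cH^{2-s}_0(B)$ via the Hardy-type bound contained in \eqref{eq:norm}. You correctly flag this blow-up in your last paragraph, but your proposed route does not resolve it. The paper in fact runs a five-case analysis to place $v$ in $H^{s-2}(B)$: duality for $1<s<\tfrac32$; a direct $L^p$ argument with $p\in(\tfrac{2N}{N+1},2)$ and $W^{2,p}\hookrightarrow H^{3/2}$ for $s=\tfrac32$ (this also sidesteps the forbidden exponent in \eqref{G1-map2}); $v\in L^2$ for $\tfrac32<s<2$; complex interpolation $[W^{m-2,p}(B),W^{m-1,q}(B)]_\sigma=H^{s-2}(B)$ for $s=m+\sigma>2$ with $\sigma\le\tfrac12$; and $v\in H^{m-1}(B)$ directly for $\sigma>\tfrac12$.

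Two smaller corrections. First, the interpolation is needed not to bridge half-integer $s$ or the restriction in \eqref{G1-map2} (only $s=\tfrac32$ hits that, and it is handled via $L^p$), but to land $v$ in the genuinely fractional space $H^{s-2}(B)$ when $s>2$ and $\sigma\le\tfrac12$, where the weighted pointwise bounds on $\partial^\alpha v$ give integer-order $W^{k,p}$ information that must be interpolated. Second, for $w$ no duality is needed: the inductive hypothesis already gives $w\in\cH^{s-1}_0(B)\subset H^{s-1}(B)$, whence $\cG_1 w\in H^{s+1}(B)\subset H^s(B)$ directly by \eqref{G1-map2}.
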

\begin{proof}
For $s\in\N\cup (0,1)$ the result is known, see \cite[Section 4.2.1]{GGS10} and \cite{S07,nicola,GT,B16,G15:2}. We argue by induction on $s$.  Let $s>1$, $s\not\in N$, and consider the case $2\sigma+\alpha\in(0,1)$ (the other cases can be proved similarly).  By the induction hypothesis, we have that $\cG_{s-1}(\cdot,y), P_{s-1}(\cdot,y)\in L^{1}(B)$ and, by Lemma~\ref{Lap:green},
\begin{equation}\label{eq:composedgreen}
\cG_s(x,y)=\int_B \cG_1(x,z)\cG_{s-1}(z,y)\;dz -C\int_B \cG_1(x,z)P_{s-1}(z,y)\;dz\qquad  \text{ for } x,y\in B
\end{equation}
 with $C=4k_{N,s}(s-1)$. If $u$ is given by \eqref{Gsu}, then \eqref{eq:composedgreen} implies that $u=u_1-C u_2$, where
 \begin{align*}
  u_1(x)&:=\int_B\cG_1(x,z)v_1(z)\;dz,\quad 
  v_1(z):=\int_B \cG_{s-1}(z,y)\,f(y)\;dy,\\
  u_2(x)&:=\int_B \cG_1(x,z)v_2(z)\; dz,\quad 
  v_2(z):=\int_B P_{s-1}(z,y)\,f(y)\;dy.
 \end{align*}
Then $v_1\ \in\ C^{2s-2+\alpha}_{loc}(B)$, by the induction hypothesis, and then $u_1\in\ C^{2s+\alpha}_{loc}(B)$, by classical elliptic regularity. Furthermore, $v_2\in\ C^\infty(B)$, by Lemma \ref{propertiesp}, and thus $u_2 \in\ C^{\infty}(B).$ Therefore $u\in C^{2s+\alpha}_{loc}(B)$ and $u\in C^s_0(B)$, by Proposition \ref{bd:reg}.

It remains to show that $u\in \cH_0^s(B)$. By \eqref{eq:norm} and Proposition \ref{bd:reg}, it suffices to show that $u\in H^s(B)$. Since $v_1\in \cH_0^{s-1}(B)\subset H^{s-1}(B)$, by the induction hypothesis, we obtain that $u_1\in H^{s+1}(B)\subset H^{s}(B)$. 

We now show that $u_2\in H^s(B)$ arguing differently according to the value of $s$.

Assume first that $1< s < \frac{3}{2}$. Then there is $C>0$ such that 
\begin{equation}\label{lf}
 \int_B v_2(x) \varphi(x) \, dx\leq C \int_B (1-|x|^2)^{s-2}\varphi(x)\, dx\leq C\|d^{-(2-s)}\varphi\|_{L^2(B)}\leq C\|\varphi\|_{\cH_0^{2-s}(B)}
\end{equation}
for $\varphi\in \cH^{2-s}_0(B)$, by \eqref{eq:norm}. Then the functional $\cH^{2-s}_0(B)\ni\varphi\mapsto \int_B v_2\varphi\, dx$ is linear and bounded. Therefore, $v_2\in (\cH_0^{2-s}(B))'=H^{s-2}(B)$, by \eqref{d:em}, and thus $u_2\in H^s(B)$, by \eqref{G1-map2}.

Now, let $s=\frac{3}{2}$ and fix $p\in(\frac{2N}{N+1},2)$. Then $v_2\in L^p(B)$ and thus $u_2\in W^{2,p}(B)\subset H^s(B)$, by Sobolev embeddings (see e.g. \cite[Section 4.6.1]{T78}) and \eqref{G1-map2}.

Furthermore, if $2>s>\frac{3}{2}$, then Lemma \ref{propertiesp} implies that $v_2\in L^2(\R^N)$ and then $u_2\in H^2(B)\subset H^s(B)$, by \eqref{G1-map2} and Sobolev embeddings.

For $s=m+\sigma>2$ with  $\sigma \leq \frac{1}{2}$, fix
\begin{align}\label{pqdef}
q:=(1-\frac{\sigma}{2})^{-1}\qquad \text{ and }\qquad p:=\frac{2-2\sigma}{1-\sigma(2-\sigma)}.
\end{align}
Then, by Lemma \ref{propertiesp} and complex interpolation (see \cite[Proposition 2.4]{Lototsky}),
\begin{align*}
 v_2\in W^{m-2,p}(B)\cap W^{m-1,q}(B)\subset[W^{m-2,p}(B)\, ,\, W^{m-1,q}(B)]_{\sigma}=H^{s-2}(B).
\end{align*}
Therefore $v_2\in H^{s-2}(B)$ for all $s>2$, which yields $u_2\in H^s(B)$, by \eqref{G1-map2}.

Finally, if $s=m+\sigma>2$ and $\sigma>\frac{1}{2}$, then $v_2\in H^{m-1}(B)\subset H^{s-2}(B)$, by Lemma \ref{propertiesp}. But then $u_2\in H^s(B)$, by \eqref{G1-map2}, also in this case and the proof is finished. 
\end{proof}

\begin{remark}\label{reg-it-arg} {\ }
\begin{enumerate}
\item If $u_s:=\int_B\cG_{s}(\cdot,y)f(y)\ dy\in H^{s}(B),$ whenever $f\in L^p(B)$, $p>\frac{N}{s}$, and $s\in(0,1)$, then Theorem \ref{reg-ball-a} would also hold for $f\in L^p(B)$ with $p>\frac{N}{s}$ with a very similar proof. 
\item Arguing as in the proof of Theorem \ref{reg-ball-a} one can show that $u_s(x):=(1-|x|^2)_+^s$, $x\in \R^N$, belongs to $\cH^s_0(B)$. Indeed, for $m\in\N_0$, $\sigma\in(0,1]$, and $s=m+\sigma$, we have that $u_s\in H^{m+1}(B)\subset H^s(B)$ if $\sigma>\frac{1}{2}$ and $u_s\in W^{m,p}(B)\cap W^{m+1,q}(B)\subset H^s(B)$ if $\sigma\leq \frac{1}{2}$, where $p$ and $q$ are as in \eqref{pqdef}. But then $u_s\in \cH_0^{s}(B)$, by \eqref{eq:norm}.
\end{enumerate}	
	
\end{remark}

\subsection{Remarks on \texorpdfstring{$s$}{s}-harmonic functions}

For $s>0$ we define $M_s$ the \emph{$s$-Martin kernel for the ball} by (see for example \cite{B99,nicola})
\begin{align*}
{M_s}(x,\theta):=\lim_{z\to\theta,z\in B}\frac{\cG_{s}(x,z)}{(1-|z|^2)^{s}}\qquad \text{ for } x\in B,\ \theta\in \partial B.
\end{align*}

The next Lemma provides an explicit formula for $M_s$.

\begin{lemma}\label{Martin:ball}
Let $s>0$ and $N\geq 1$. Then
\begin{align*}
 M_s(x,\theta)=\frac{k_{N,s}}{s}\frac{(1-|x|^2)^s_{ +}}{|\theta-x|^N}\qquad \text{ for } x\in B,\ \theta\in \partial B,
\end{align*}
where $k_{N,s}$ is as in \eqref{greenconst-intro}.
\end{lemma}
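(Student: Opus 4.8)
The plan is to work directly from Boggio's formula \eqref{greenball-intro} and to track the behaviour of the defining quotient $\cG_s(x,z)/(1-|z|^2)^s$ as $z\to\theta$. Fix $x\in B$ and $\theta\in\partial B$ and take $z\in B$. The key elementary observation is that
\[
\rho(x,z)=\frac{(1-|x|^2)(1-|z|^2)}{|x-z|^2}\longrightarrow 0\qquad\text{as }z\to\theta,
\]
because $1-|z|^2\to 0$ while $|x-z|\to|x-\theta|>0$ and $1-|x|^2$ is a fixed positive number. So the whole game is the asymptotics of the integral in \eqref{greenball-intro} as its upper limit tends to $0$.

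First I would rescale that integral by the substitution $v=\rho(x,z)\,w$, which gives
\[
\int_0^{\rho(x,z)}\frac{v^{s-1}}{(v+1)^{N/2}}\,dv=\rho(x,z)^s\int_0^1\frac{w^{s-1}}{(\rho(x,z)\,w+1)^{N/2}}\,dw .
\]
Substituting this into \eqref{greenball-intro} and using the algebraic identity $\rho(x,z)^s(1-|z|^2)^{-s}=(1-|x|^2)^s|x-z|^{-2s}$, valid for $x,z\in B$, together with the elementary simplification $|x-z|^{2s-N}\cdot|x-z|^{-2s}=|x-z|^{-N}$, one obtains the exact identity
\[
\frac{\cG_s(x,z)}{(1-|z|^2)^s}=k_{N,s}\,\frac{(1-|x|^2)^s}{|x-z|^{N}}\int_0^1\frac{w^{s-1}}{(\rho(x,z)\,w+1)^{N/2}}\,dw\qquad\text{for }x,z\in B .
\]

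Finally I would let $z\to\theta$. The prefactor converges to $k_{N,s}(1-|x|^2)^s|x-\theta|^{-N}$. For the integral, since $0\le\rho(x,z)\,w\le\rho(x,z)\to 0$ and the integrand is dominated by $w^{s-1}\in L^1(0,1)$ (here $s>0$ is used), dominated convergence yields $\int_0^1\frac{w^{s-1}}{(\rho(x,z)\,w+1)^{N/2}}\,dw\to\int_0^1 w^{s-1}\,dw=\frac1s$. Combining, the limit defining $M_s(x,\theta)$ exists and equals $\frac{k_{N,s}}{s}\frac{(1-|x|^2)^s}{|x-\theta|^N}$; since $x\in B$ we may write $(1-|x|^2)^s=(1-|x|^2)_+^s$, which is the claimed formula. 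There is no genuinely hard step here: the only points needing (minor) care are the algebraic identity for $\rho(x,z)^s/(1-|z|^2)^s$ and the justification of passing to the limit inside the integral, both of which are routine.
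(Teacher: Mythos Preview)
Your proof is correct and follows essentially the same route as the paper's: both perform the change of variables $v=\rho(x,z)\,w$ in Boggio's integral to factor out $(1-|z|^2)^s$, and then pass to the limit $z\to\theta$ using that $\rho(x,z)\to 0$. The paper rewrites the resulting integrand with $((1-|x|^2)(1-|z|^2)t+|x-z|^2)^{N/2}$ in the denominator before taking the limit, whereas you keep the form $(\rho w+1)^{N/2}$ and invoke dominated convergence explicitly, but this is a cosmetic difference.
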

\begin{proof}
For $x,z\in \R^N$ with $x\neq z$ and $\rho(x,z)=(1-|x|^2)_+(1-|z|^2)_+|x-z|^{-2}$ let $t=\rho(x,z)$, then
\[
\cG_s(x,z)=k_{N,s}(1-|x|^2)^s_+(1-|z|^2)^s_+\int_0^1 \frac{t^{s-1}}{((1-|x|^2)_+(1-|z|^2)_+ t +|x-z|^2)^{\frac{N}{2}}}\ dt.
\]	
Hence, for $\theta\in\partial B$ and $x\in B$, it follows that
\begin{align*}
M_s(x,\theta)&=k_{N,s}(1-|x|^2)^s\lim_{z\to\theta,z\in B}\int_0^1 \frac{t^{s-1}}{((1-|x|^2)_+(1-|z|^2)_+ t +|x-z|^2)^{\frac{N}{2}}}\ dt\\
&=k_{N,s}\frac{(1-|x|^2)^s}{|x-\theta|^{N}}\int_0^1 t^{s-1}\ dt=\frac{k_{N,s}}{s}\frac{(1-|x|^2)^s}{|x-\theta|^{N}}
\end{align*}	
\end{proof}

Martin kernels provide a useful characterization of some $s$-harmonic functions. 
\begin{lemma}\label{lem:sharm}
Let $s>0$ and assume
	\begin{equation}\label{eq:point_s-1}
	\int_B\cG_{s}(x,y)(-\Delta)^{s}\psi(y)\ dy=\psi(x) \quad\text{ for all $x\in B$ and  $\psi\in C^{\infty}_c(B)$.}
	\end{equation}
If  $\mu\in\cM(\partial B)$ is a finite Radon measure, then the function $\R^N\ni x\mapsto u(x):=\int_{\partial B}M_{s}(x,z)\;d\mu(z)$ is $s$-harmonic in $B$.
\end{lemma}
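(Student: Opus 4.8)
The plan is to verify the two requirements in Definition~\ref{s-harmonic-defi}: that $u\in\cL^1_s$ and that $\pair{(-\Delta)^su}{\phi}=0$ for every $\phi\in C^\infty_c(B)$. For the first, Lemma~\ref{Martin:ball} gives $M_s(x,z)=\tfrac{k_{N,s}}{s}(1-|x|^2)_+^s|z-x|^{-N}$, and since $1-|x|^2\le 2(1-|x|)\le 2|z-x|$ for $z\in\partial B$ we obtain $0\le M_s(x,z)\le C\,|x-z|^{s-N}$ with $C=C(N,s)$; hence, by Tonelli,
\[
\int_{\R^N}\frac{|u(x)|}{1+|x|^{N+2s}}\,dx\ \le\ \int_B|u(x)|\,dx\ \le\ \int_{\partial B}\Big(\int_B M_s(x,z)\,dx\Big)d|\mu|(z)\ \le\ C\,|\mu|(\partial B)<\infty ,
\]
so $u\in\cL^1_s$ and $u\equiv 0$ on $\R^N\setminus B$.

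Next, fix $\phi\in C^\infty_c(B)$ and set $g:=(-\Delta)^s\phi$, which is bounded on $B$ by Lemma~\ref{boundedevaluationexistence}. Since $u$ vanishes outside $B$, Remark~\ref{temp:dis} gives $\pair{(-\Delta)^su}{\phi}=\int_B u(x)g(x)\,dx$, and the integrability just obtained together with Fubini yields $\pair{(-\Delta)^su}{\phi}=\int_{\partial B}\big(\int_B M_s(x,z)g(x)\,dx\big)\,d\mu(z)$. So it is enough to prove that $\int_B M_s(x,z)g(x)\,dx=0$ for each fixed $z\in\partial B$. By the symmetry of $\cG_s$ and the hypothesis \eqref{eq:point_s-1} (applied to $\psi=\phi$), $\int_B \cG_s(x,y)g(x)\,dx=\phi(y)$ for all $y\in B$; hence, provided the limit may be passed inside the integral,
\[
\int_B M_s(x,z)g(x)\,dx=\lim_{y\to z}\frac{1}{(1-|y|^2)^s}\int_B\cG_s(x,y)g(x)\,dx=\lim_{y\to z}\frac{\phi(y)}{(1-|y|^2)^s}=0,
\]
the last step because $\phi$ vanishes in a neighbourhood of $\partial B$.

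The main obstacle is justifying this interchange of limit and integral: $\cG_s(x,y)(1-|y|^2)^{-s}$ converges pointwise to $M_s(x,z)$ as $y\to z$, but it is not dominated in an integrable way near the moving diagonal $x=y$. I would argue with a three-region splitting. Fix $\eta>0$ and take $y\in B$ with $|y-z|<\eta/2$. On $\{x\in B:|x-z|\ge\eta\}$ one has $|x-y|\ge|x-z|/2$, so the elementary estimate $\cG_s(x,y)\le\tfrac{k_{N,s}}{s}(1-|x|^2)_+^s(1-|y|^2)_+^s|x-y|^{-N}$ (coming from $\int_0^\rho \tau^{s-1}(\tau+1)^{-N/2}\,d\tau\le\rho^s/s$) gives $\cG_s(x,y)(1-|y|^2)^{-s}\le C|x-z|^{s-N}\in L^1(B)$ uniformly in $y$, and dominated convergence handles this region. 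On $\{x\in B:|x-z|<\eta\}$ I would split further according to $|x-y|\le 2d(y)$ versus $|x-y|>2d(y)$, with $d(y)=1-|y|$. In the first sub‑region, Lemma~\ref{s:arg} gives $\cG_s(x,y)\le C|x-y|^{\varepsilon-N}\big((1-|x|^2)_+(1-|y|^2)_+\big)^{s-\varepsilon/2}$ for a fixed $\varepsilon\in(0,\min\{N,s\})$, and since $(1-|x|^2)_+\le 2d(x)\le 2(d(y)+|x-y|)\le 6\,(1-|y|^2)$ there, this yields $\cG_s(x,y)(1-|y|^2)^{-s}\le C|x-y|^{\varepsilon-N}(1-|y|^2)^{s-\varepsilon}$, whose integral over $\{|x-y|\le 2d(y)\}$ is $\le C(1-|y|^2)^{s}\to 0$ as $y\to z$. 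In the second sub‑region $d(x)\le d(y)+|x-y|<\tfrac32|x-y|$, so the elementary estimate gives $\cG_s(x,y)(1-|y|^2)^{-s}\le C|x-y|^{s-N}$, whose integral over $\{|x-z|<\eta\}$ is $\le C\eta^s$; and $\int_{\{|x-z|<\eta\}}M_s(x,z)\,dx\le C\eta^s$ as well. Combining these bounds, taking $\limsup_{y\to z}$ and then $\eta\to 0$, we obtain $\int_B\cG_s(x,y)(1-|y|^2)^{-s}g(x)\,dx\to\int_B M_s(x,z)g(x)\,dx$, which by the previous paragraph forces $\int_B M_s(x,z)g(x)\,dx=0$ and completes the proof. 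The only delicate point is carrying out this three‑region bookkeeping uniformly in $y$; everything else is a routine use of Fubini and dominated convergence.
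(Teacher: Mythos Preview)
Your proof is correct and follows exactly the same line as the paper's: show $u\in\cL^1_s$ via the bound $M_s(x,z)\le C|x-z|^{s-N}$, then unfold $\pair{(-\Delta)^su}{\phi}$ by Fubini and use the hypothesis \eqref{eq:point_s-1} together with the definition of $M_s$ as a boundary limit of $\cG_s(\cdot,y)/(1-|y|^2)^s$, concluding from $\phi\equiv 0$ near $\partial B$.

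The one substantive difference is that the paper simply writes the interchange
\[
\int_B\lim_{y\to z}\frac{\cG_s(x,y)}{(1-|y|^2)^s}(-\Delta)^s\phi(x)\,dx
=\lim_{y\to z}\frac{1}{(1-|y|^2)^s}\int_B\cG_s(x,y)(-\Delta)^s\phi(x)\,dx
\]
without justification, whereas you single this out as the main obstacle and supply a correct three-region argument (far diagonal via dominated convergence; near diagonal split into $|x-y|\le 2d(y)$ handled with Lemma~\ref{s:arg}, and $|x-y|>2d(y)$ handled with the crude bound $\cG_s(x,y)\le \tfrac{k_{N,s}}{s}(1-|x|^2)^s(1-|y|^2)^s|x-y|^{-N}$). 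Your estimates check out, including the use of $d(x)\le 3d(y)$ in the first sub-region and $d(x)<\tfrac32|x-y|$ in the second. So your write-up is in fact more complete than the paper's on this point; nothing further is needed.
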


\begin{proof}

We first show that $u\in L^1(B)$. Indeed,
\[
\int_B |u(x)|\;dx\leq\int_{\partial B}\int_B M_{s}(x,z)\;dx\;d|\mu|(z)
\leq 2^sk_{N,s}\int_{\partial B}\int_B |x-z|^{s-N}\;dxd|\mu|(z)<+\infty.
\]
Since $u=0$ in $\R^N\setminus B$, then $u\in\cL^1_{s}$.  Let $\psi\in C_c^\infty(B)$ and note that $u\in C^{\infty}(B)$. Then $(-\Delta)^{s} u(x)$ exists for all $x\in B$ and, by \ref{eq:point_s-1},
\begin{align*}
\langle {(-\Delta)^{s}u},\psi\rangle &=\int_{B}{u(x)}(-\Delta)^{s}\psi(x)\ dx=\int_{B}\int_{\partial B} M_{s}(x,\theta)\ d\mu(\theta) (-\Delta)^{s}\psi(x)\ dx\\
 &=\int_{B}\int_{\partial B} \lim_{z\to\theta,z\in B}\frac{\cG_{s}(x,z)}{(1-|z|^2)^{{s}}}\ d\mu(\theta)(-\Delta)^{s}\psi(x)\ dx\\
 &=\int_{\partial B}\lim_{z\to\theta,z\in B}\frac{1}{(1-|z|^2)^{{s}}}\int_{B} \cG_{s}(x,z) (-\Delta)^{s}\psi(x)\ dx\ d\mu(\theta)\\
 &=\int_{\partial B}\lim_{z\to\theta,z\in B}\frac{\psi(z)}{(1-|z|^2)^{{s}}}\ d\mu(\theta)=0,
\end{align*}
since $\psi$ has compact support in $B$. Therefore $u$ is $s$-harmonic.
\end{proof}

\begin{remark}
We assume \eqref{eq:point_s-1} as part of our iteration argument, but once Theorem \ref{green:thm} is proved then \eqref{eq:point_s-1} holds for all $s>0$.
\end{remark}

%
%
%
%

We now show the relationship between $P_{s-1}$ from Lemma \ref{Lap:green} and $M_s$.
\begin{lemma}\label{Martin:rep} 
Let $s>0$, and $y\in B$. Then 
\[
P_{s-1}(x,y)=\frac{2k_{N,1}(s-1)s}{k_{N,s-1}k_{N,s}}\int_{\partial B}M_{s-1}(x,\theta)M_{s}(y,\theta)\ d\theta\quad\text{ for $x\in B$.}
\]
\end{lemma}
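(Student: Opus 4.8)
The plan is to verify the identity by direct computation, using the explicit formula for the Martin kernel established in Lemma~\ref{Martin:ball}. Recall that $M_s(x,\theta) = \frac{k_{N,s}}{s}\frac{(1-|x|^2)^s_+}{|\theta - x|^N}$, and similarly $M_{s-1}(x,\theta) = \frac{k_{N,s-1}}{s-1}\frac{(1-|x|^2)^{s-1}_+}{|\theta - x|^N}$. Substituting these into the right-hand side, the constants combine as
\[
\frac{2k_{N,1}(s-1)s}{k_{N,s-1}k_{N,s}}\cdot\frac{k_{N,s-1}}{s-1}\cdot\frac{k_{N,s}}{s} = 2k_{N,1},
\]
so the claim reduces to showing
\[
P_{s-1}(x,y) = 2k_{N,1}(1-|x|^2)^{s-2}_+(1-|y|^2)^{s-1}_+\int_{\partial B}\frac{d\theta}{|\theta-x|^N|\theta-y|^N}.
\]
Comparing with the definition \eqref{Psm1} of $P_{s-1}$, namely $P_{s-1}(x,y) = \frac{(1-|x|^2)_+^{s-2}(1-|y|^2)_+^{s-1}(1-|x|^2|y|^2)}{[x,y]^N}$, the powers of $(1-|x|^2)$ and $(1-|y|^2)$ match identically, so the whole identity collapses to the purely geometric statement
\[
\int_{\partial B}\frac{d\theta}{|\theta-x|^N|\theta-y|^N} = \frac{1-|x|^2|y|^2}{2k_{N,1}\,[x,y]^N}\qquad\text{ for }x,y\in B,
\]
where $[x,y] = \sqrt{|x|^2|y|^2 - 2x\cdot y + 1}$ and $k_{N,1} = k_{N,1}$ is Boggio's constant for $s=1$.

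The next step is to recognize this integral as a classical identity for the Poisson kernel of the ball. Indeed, the Poisson kernel for $-\Delta$ on $B$ is $\mathcal P(x,\theta) = \frac{1-|x|^2}{N\omega_N|\theta-x|^N}$ (up to the normalization constant $N\omega_N = $ surface measure of $\partial B$), and $M_1(x,\theta)$ is, up to the factor $k_{N,1}$, exactly this Poisson kernel. The quantity $\int_{\partial B}\mathcal P(x,\theta)\mathcal P(y,\theta)\,d\theta$ is the harmonic measure composition and can be evaluated in closed form: it equals $\mathcal P(x, \iota(y))$-type expressions, or more directly, one uses the known formula
\[
\int_{\partial B}\frac{(1-|x|^2)(1-|y|^2)}{|\theta-x|^N|\theta-y|^N}\,d\theta = N\omega_N\,\frac{1-|x|^2|y|^2}{[x,y]^N}.
\]
This is a standard computation — it follows, for instance, from the semigroup/reproducing property of the Poisson kernel (the Poisson integral of a Poisson kernel is again a Poisson kernel evaluated at the harmonic image point), or from the fact that $\cG_1$ satisfies the analogue of \eqref{eq:composedgreen}/Lemma~\ref{Lap:green} with the remainder term expressed through $M_1$. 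Alternatively, I would invoke Lemma~\ref{Lap:green} itself at $s=2$: there $-\Delta_x\cG_2(x,y) = \cG_1(x,y) - 4k_{N,2}P_1(x,y)$, and since $\cG_2(x,y) = \int_B\cG_1(x,z)\cG_1(z,y)\,dz$ (Boggio's formula for $s=2$ being already known, or via Corollary~\ref{green:cor2}), applying $-\Delta_x$ and using $-\Delta_x\cG_1(x,z) = \delta_z$ plus the boundary contribution identifies $P_1$ with the Martin-kernel convolution directly, and the general case $P_{s-1}$ follows since the $(1-|x|^2)$, $(1-|y|^2)$ powers are the only $s$-dependent pieces.

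After reducing to the geometric identity, I would give a self-contained derivation: normalize so that the surface measure of $\partial B$ is $N\omega_N$, write $\theta\in\partial B$, and compute $\int_{\partial B}|\theta-x|^{-N}|\theta-y|^{-N}\,d\theta$ by either (i) expanding in spherical harmonics / Gegenbauer polynomials and summing the resulting generating series, or (ii) using a Möbius transformation of the ball sending $y$ to the origin, under which $|\theta - y|^{-N}d\theta$ transforms by the Jacobian into a constant multiple of $d\theta'$, reducing the integral to $\int_{\partial B}|\theta' - x'|^{-N}\,d\theta'$ for the transformed point $x'$, which is elementary. The main obstacle I anticipate is not conceptual but bookkeeping: tracking the precise normalization constant $k_{N,1}$ against the surface-area factor $N\omega_N$ and against the Jacobian of the inversion, so that the final constant comes out to exactly $\frac{N\omega_N}{2k_{N,1}}$ matching $(1-|x|^2|y|^2)/[x,y]^N$ on the nose. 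Once the constants are pinned down — most safely by checking the identity at a special point such as $y = 0$, where $[x,0] = 1$, $M_s(x,\theta) \equiv \frac{k_{N,s}}{s}(1-|x|^2)^s$ restricted appropriately, and both sides reduce to multiples of $(1-|x|^2)^{s-2}$ — the general identity follows, completing the proof.
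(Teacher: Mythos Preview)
Your reduction to a single geometric integral identity is exactly the paper's strategy, but you have a bookkeeping slip in the exponents: $M_{s-1}(x,\theta)$ carries $(1-|x|^2)^{s-1}$ and $M_s(y,\theta)$ carries $(1-|y|^2)^s$, so after cancelling the constants the right-hand side is $2k_{N,1}(1-|x|^2)^{s-1}(1-|y|^2)^{s}\int_{\partial B}|\theta-x|^{-N}|\theta-y|^{-N}\,d\theta$, not with the exponents $s-2$ and $s-1$ you wrote. Hence the powers do \emph{not} ``match identically'' with $P_{s-1}$; an extra factor $(1-|x|^2)(1-|y|^2)$ survives, and the identity you actually need is
\[
\int_{\partial B}\frac{d\theta}{|\theta-x|^N|\theta-y|^N}=\frac{1-|x|^2|y|^2}{2k_{N,1}\,(1-|x|^2)(1-|y|^2)\,[x,y]^N},
\]
which is precisely your ``known formula'' once one uses $2k_{N,1}\cdot N\omega_N=1$. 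So the error is harmless in the end, but the intermediate display and the sentence after it are wrong as written.

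On the proof of this integral identity, the paper's route is considerably shorter than any of the three you sketch. Fixing $y\in B$, the paper observes that the function
\[
v(x)=\frac{1-|x|^2|y|^2}{(1-|y|^2)[x,y]^N}
\]
is \emph{harmonic} in $x\in B$ (for $y\neq 0$ write $\eta=y/|y|^2$, so $v(x)=\frac{|\eta|^N}{|\eta|^2-1}\cdot\frac{|\eta|^2-|x|^2}{|x-\eta|^N}$, a rescaled Poisson kernel at the exterior point $\eta$) with boundary values $v(\theta)=|\theta-y|^{-N}$. The Poisson representation then gives the identity in one line. This avoids spherical-harmonic expansions, avoids transforming the integral by a M\"obius map, and avoids the detour through $\cG_2$ and Lemma~\ref{Lap:green} at $s=2$. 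Your M\"obius idea is morally the same observation (the inversion $y\mapsto y/|y|^2$ is what makes $v$ visibly harmonic), but used to verify harmonicity rather than to change variables in the integral it is a one-line computation instead of a Jacobian chase.
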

\begin{proof}
Fix $y\in B$ and let $v(x):=\frac{(1-|x|^2|y|^2)}{(1-|y|^2)[x,y]^{N}}$ for $x\in B$. Note that $-\Delta v=0$ in $B$ and $v(\theta)=|\theta-y|^{-N}$ for $\theta\in\partial B.$  Indeed, if $y=0$ then $v\equiv 1$ and if $y\in B\backslash\{0\}$ then $v(x)=\frac{|\eta|^{N}}{|\eta|^2-1}\frac{|\eta|^2-|x|^2}{|x-\eta|^N}$ with $\eta:=\frac{y}{|y|^2}$, and $-\Delta v=0$ follows by a simple calculation. 
Then, by uniqueness and using the Poisson kernel for the Laplacian, 
\begin{align*}
\frac{(1-|x|^2|y|^2)}{(1-|y|^2)[x,y]^{N}}=v(x)=2k_{N,1}\int_{\partial B}\frac{1-|x|^2}{|x-\theta|^N[\theta,y]^{N}}\ d\theta.
\end{align*}
Therefore,
\begin{align*}
P_{s-1}&(x,y)=(1-|x|^2)^{s-2}(1-|y|^2)^{s}\frac{(1-|x|^2|y|^2)}{(1-|y|^2)[x,y]^{N}}\\
&={2k_{N,1}}(1-|x|^2)^{s-2}(1-|y|^2)^{s}\int_{\partial B}\frac{1-|x|^2}{|x-\theta|^N[\theta,y]^{N}}\ d\theta ={2k_{N,1}}\int_{\partial B}\frac{(1-|x|^2)^{s-1}}{|x-\theta|^N}\frac{(1-|y|^2)^{s}}{|\theta-y|^{N}}\ d\theta
\\&={\frac{2k_{N,1}(s-1)s}{k_{N,s-1}{k_{N,s}}}}\int_{\partial B}M_{s-1}(x,\theta){M_{s}(y,\theta)}\ d\theta,
\end{align*}
by Lemma \ref{Martin:ball}, as claimed. 
\end{proof}

\begin{cor}\label{Q:harmonic}
Let $y\in B$ and $s>1$.  If \eqref{eq:point_s-1} holds, then $P_{s-1}(\cdot,y)$ is $(s-1)$-harmonic in $B$.
\end{cor}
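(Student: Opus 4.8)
The plan is to deduce Corollary \ref{Q:harmonic} directly from the explicit representation of $P_{s-1}$ established in Lemma \ref{Martin:rep} together with the characterization of $s$-harmonic functions in Lemma \ref{lem:sharm}. First I would fix $y\in B$ and recall from Lemma \ref{Martin:rep} that
\[
P_{s-1}(x,y)=\frac{2k_{N,1}(s-1)s}{k_{N,s-1}k_{N,s}}\int_{\partial B}M_{s-1}(x,\theta)\,M_{s}(y,\theta)\ d\theta\qquad\text{for }x\in B,
\]
so that, as a function of $x$, $P_{s-1}(\cdot,y)$ is exactly of the form $\int_{\partial B}M_{s-1}(x,\theta)\ d\mu(\theta)$ with the finite (signed, in fact nonnegative) Radon measure $d\mu(\theta):=\frac{2k_{N,1}(s-1)s}{k_{N,s-1}k_{N,s}}M_{s}(y,\theta)\ d\theta$ on $\partial B$. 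Since $y\in B$ is fixed, $\theta\mapsto M_s(y,\theta)$ is a bounded continuous function on $\partial B$ by the explicit formula in Lemma \ref{Martin:ball}, hence $\mu$ is indeed a finite Radon measure.

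The second step is to apply Lemma \ref{lem:sharm} with the exponent $s-1$ in place of $s$. That Lemma requires the hypothesis \eqref{eq:point_s-1} at level $s-1$, namely $\int_B\cG_{s-1}(x,y)(-\Delta)^{s-1}\psi(y)\ dy=\psi(x)$ for $\psi\in C^\infty_c(B)$; since we are inside the induction on $s$ and this is precisely the statement assumed to hold at the previous level (cf. the remark following Lemma \ref{lem:sharm} and the standing iteration hypothesis), it is available. Applying Lemma \ref{lem:sharm} to the measure $\mu$ above yields that $x\mapsto\int_{\partial B}M_{s-1}(x,\theta)\ d\mu(\theta)=P_{s-1}(x,y)$ is $(s-1)$-harmonic in $B$ in the sense of distributions, which is the claim.

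I do not expect a genuine obstacle here: the corollary is essentially a packaging of Lemma \ref{Martin:rep} and Lemma \ref{lem:sharm}. The only points requiring a word of care are (i) checking that the coefficient measure $M_s(y,\cdot)\,d\theta$ is finite, which is immediate from Lemma \ref{Martin:ball} for fixed $y\in B$, and (ii) making sure the version of \eqref{eq:point_s-1} needed is the one at level $s-1$, so that the statement is consistent with the inductive structure in which it is used (it is invoked in the proof of Theorem \ref{green:thm} to identify the correction term $P_{s-1}$ as $(s-1)$-harmonic). Both are routine, so the proof is short.

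\begin{proof}[Proof of Corollary \ref{Q:harmonic}]
Fix $y\in B$. By Lemma \ref{Martin:ball}, the function $\theta\mapsto M_s(y,\theta)=\frac{k_{N,s}}{s}\frac{(1-|y|^2)^s}{|\theta-y|^N}$ is bounded and continuous on $\partial B$, so
\[
d\mu(\theta):=\frac{2k_{N,1}(s-1)s}{k_{N,s-1}k_{N,s}}\,M_s(y,\theta)\ d\theta
\]
defines a finite (nonnegative) Radon measure on $\partial B$. By Lemma \ref{Martin:rep},
\[
P_{s-1}(x,y)=\int_{\partial B}M_{s-1}(x,\theta)\ d\mu(\theta)\qquad\text{for all }x\in B.
\]
Since \eqref{eq:point_s-1} holds with $s$ replaced by $s-1$, Lemma \ref{lem:sharm} (applied with exponent $s-1$ and the measure $\mu$) shows that $x\mapsto\int_{\partial B}M_{s-1}(x,\theta)\ d\mu(\theta)$ is $(s-1)$-harmonic in $B$. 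Hence $P_{s-1}(\cdot,y)$ is $(s-1)$-harmonic in $B$.
\end{proof}
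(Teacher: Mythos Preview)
Your proof is correct and follows exactly the same approach as the paper, which simply says ``Combine Lemma \ref{Martin:rep} and Lemma \ref{lem:sharm}.'' You have merely unpacked the details---identifying the finite Radon measure $d\mu(\theta)=\frac{2k_{N,1}(s-1)s}{k_{N,s-1}k_{N,s}}M_s(y,\theta)\,d\theta$ and noting that Lemma \ref{lem:sharm} must be applied at level $s-1$, which is consistent with the inductive hypothesis under which the corollary is invoked in the proof of Theorem \ref{green:thm}.
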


\begin{proof}
 Combine Lemma \ref{Martin:rep} and Lemma \ref{lem:sharm}.
\end{proof}

\begin{remark}\label{rmk:sharm}   {\ }
\begin{enumerate}
 \item As mentioned before, the Martin kernel $M_s$ provides a useful characterization of some $s$-harmonic functions. This characterization is new for $s>1$ and may be of independent interest. Namely, if $s>0$ and $g\in C(\partial B)$, then $v(x):=\int_{\partial B}M_s(x,\theta)g(\theta)\ d\theta$ for $x\in B$, is $s$-harmonic.
\item Arguing as in \cite{nicola}, it is possible to prove that if $g\in C(\partial{B})$, then 
 \begin{align*}
  \lim_{z\to\tilde \theta,z\in B}\frac{\int_{\partial B} M_s(z,\theta)g(\theta)\ d\theta}{(1-|z|^2)^{s-1}}={\frac{k_{N,s}}{2k_{N,1}{s}} }g(\tilde\theta)\qquad \text{ for $\tilde \theta\in\partial B$.}
 \end{align*}
Therefore, if $v=\int_{\partial B}M_s(\cdot,\theta)g(\theta)\ d\theta$, then $g(\theta)=2k_{N,1} k^{-1}_{N,s}{s}\lim\limits_{z\to \theta,\ z\in B}v(z)(1-|z|^2)^{1-s}.$
\item If $\varphi\in C^2(B) \cap C(\overline B)$ is harmonic, i.e. $-\Delta\varphi=0$ in $B$, then $u(x):=(1-|x|^2)_+^{s-1}\varphi(x)$, $x\in \R^N$ is $s$-harmonic in $B$. Indeed, using the Poisson kernel representation and Lemma \ref{Martin:ball} we have that 
 \begin{align*}
  u(x)=2k_{N,1}(1-|x|^2)^{s-1}\int_{\partial B}\frac{1-|x|^2}{|x-\theta|^N}\varphi(\theta)\ d\theta=\frac{ 2k_{N,1}{s}}{k_{N,s}}\int_{\partial B}M_s(x,\theta)\varphi(\theta)\ d\theta,
 \end{align*}
 and then $(-\Delta)^s u = 0$ in $B$, by the first Remark.
 \item If a function $u$ is $s$-harmonic in $B$, then $u$ is $(s+1)$-harmonic. Indeed, $\int_{\R^N} u{(-\Delta)}^{s+1}\phi\ dx=\int_{\R^N} u{(-\Delta)}^s[-\Delta\phi]\ dx=0$ for any $\phi\in C^\infty_c(B).$  Thus, for $j\in(0,s)\cap\N$  functions of the type $\int_{\partial B} M_{s-j}(x,\theta) g(\theta)\ d\theta$ are also $s$-harmonic.
\end{enumerate}
\end{remark}

\subsection{Proof of Theorem \ref{green:thm} and consequences}

Recall the \emph{dual pairing} notation $\pair{\cdot}{\cdot}$ introduced in Section \ref{Notation} (see also Section \ref{sec:greenfund}).

\begin{proof}[Proof of Theorem \ref{green:thm}]
Let $f\in C^{\alpha}(B)$ for some $\alpha\in(0,1)$ with $2s+\alpha\not\in \N$ and $u$ as in \eqref{Gsu}. The claim is known for $s\in(0,1]$, see \cite{GGS10,BGR61, B16}. Let $s>1$ and assume that the statement holds for $s-1$. Then $u\in C^{2s+\alpha}_{loc}(B)\cap C^s_0(B)\cap\cH^s_0(B)$, by Theorem \ref{reg-ball-a}. Furthermore, by Lemmas \ref{Lap:green}, \ref{Q:harmonic}, \ref{ibyp}, and the induction hypothesis,
\begin{align*}
&\pair{(-\Delta)^su}{\phi}=\int_B u (-\Delta)^s\phi\ dx= \int_{B}-\Delta u (-\Delta)^{s-1}\phi \ dx\\
&=\pair{\int_B \cG_{s-1}(\cdot,y)f(y)\;dy}{(-\Delta)^{s-1} \phi}-4k_{N,s}(s-1)\int_B f(y)\pair{P_{s-1}(\cdot,y)}{(-\Delta)^{s-1} \phi}\;dy=\pair{f}{\phi}
\end{align*}
for all $\phi\in C^{\infty}_c(B)$, in particular, 
\[
\int_B\int_B\cG_s(x,y)(-\Delta)^s\phi(y)\ dy\ f(x)\ dx=\int_B u(x)(-\Delta)^s\phi(x)\ dx=\int_Bf(x)\phi(x)\ dx.
\]
for any $\phi\in C^{\infty}_c(B)$. Since $f\in C^{\alpha}(B)$ is arbitrary, we obtain that $\int_B\cG_s(x,y)(-\Delta)^s\phi(y)\ dy=\phi(x)$ for every $x\in B$ and thus $\cG_s(\cdot,y)$ is a distributional solution of $(-\Delta)^sv=\delta_y$.  Finally, $u$ is the unique weak solution of \eqref{wsol:def1} with $\Omega=B$ and satisfies $(-\Delta)^{m}(-\Delta)^{\sigma}u(x)=f(x)$ pointwise for every $x\in B$, by Lemmas \ref{ibyp} and \ref{unique:weak} (see also Remark \ref{weaksolution}) and the decay \eqref{decay} follows from Proposition \ref{bd:reg}.
\end{proof}

\begin{proof}[Proof of Corollary \ref{green:cor2}]
Let $j\in\N$ and $s>j$. For any $\phi\in C^\infty_c(B)$ we have that $(-\Delta)^j\phi\in C^\infty_c(B)$ and thus, for $x\in B$,
\[
\int_B \cG_{s-j}(x,y){(-\Delta)}^s\phi(y)\ dy\ =\ \int_B \cG_{s-j}(x,y)(-\Delta)^{s-j}(-\Delta)^j\phi(y)\ dy\ =\ {(-\Delta)}^j\phi(x),
\]
by Proposition \ref{interchange-der} and Theorem \ref{green:thm}, using that $(-\Delta)^{s-j}v=(-\Delta)^j\phi$ in $B$ has a unique solution in $\cH^{s-j}_0(B)$. Let $\mu$ be a finite Radon measure and $u_j=\int_B \cG_{s-j}(\cdot,y)\int_B \cG_j(y,z)\ d\mu(z) dy$, then  
\begin{align*}
\int_B u_j {(-\Delta)}^s\phi\ dx&= \int_B\int_B\cG_{s-j}(x,y)\int_B \cG_j(y,z)\ d\mu(z)\ dy\ {(-\Delta)}^s\phi(x)\ dx \\
&= \int_B\int_B\cG_j(y,z)\int_B \cG_{s-j}(x,y){(-\Delta)}^{s}\phi(x) \ dx\ dy\ d\mu(z)\\
&= \int_B\int_B\cG_j(y,z)(-\Delta)^{j}\phi(y) \ dy\ d\mu(z)= \int_B \phi(z) \ d\mu(z).
\end{align*}
In particular, if $d\mu(z) = f(z)\ dz$ for some $f\in C^{\alpha}(B)$, then, by Theorem \ref{reg-ball-a}, 
\begin{equation*}
 y\mapsto\int_B \cG_j(y,z)\,f(z)\ dz\in C^\alpha(B)\quad \text{ and }\quad  x\mapsto \int_B \cG_{s-j}(x,y)\int_B \cG_j(y,z)\,f(z)\ dzdy \in C^{s-j}_0(B).
\end{equation*}
\end{proof}

\begin{proof}[Proof of Corollary \ref{green:cor}]
Let $v$ as in the statement, fix $y\in B$, and let $\mu=\delta_y$ be a Dirac measure centered at $y$. Then, by Corollary \ref{green:cor2} and Theorem \ref{green:thm}, $\int_B\cG_1(\cdot,z)\cG_{s-1}(z,y)\ dz$ and $\cG_s(\cdot,y)$ are two distributional solutions of $(-\Delta)^s w = \delta_y$, and therefore
$\pair{(-\Delta)^s v}{\varphi}=0$, \emph{i.e.}, $v$ is $s$-harmonic with respect to $x$ in $B$.  Next, fix $x\in B$ and recall formula \eqref{eq:composedgreen}. By Lemma \ref{Martin:rep}, we have that
\begin{align*}
\int_B \cG_1(x,z)\,P_{s-1}(z,y)\ dz &=\frac{2k_{N,1}{(s-1)}}{k_{N,s-1}}
\int_{\partial B}\frac{(1-|y|^2)^{s}}{\left|\theta-y\right|^N}\int_B \cG_1(x,z)\,M_{s-1}(z,\theta)\,\ dz
\ \ d\theta \\
&=\frac{2k_{N,1}{(s-1)s}}{k_{N,s-1}k_{N,s}}
\int_{\partial B}M_{s}(y,\theta)\int_B \cG_1(x,z)\,M_{s-1}(z,\theta)\,\ dz\ d\theta,
\end{align*}
which, by Lemma \ref{lem:sharm}, yields that $v$ is $s$-harmonic with respect to $y$ in $B$.
\end{proof}

\begin{proof}[Proof of Corollary \ref{harm:cor}] Let $g\in C^\infty_c(\R^N\setminus\overline{B})$ with $g\geq 0$. The existence and uniqueness of a weak solution $u\in H^s(\R^N)$ to the problem $(-\Delta)^s u=0$ in $B$ with 
	$u=g$ in $\R^N\setminus B$ follows from standard arguments by minimizing $\cE_s(v,v)$ among all $v\in H^s(\R^N)$ such that $v-g\in\cH^s_0(B)$.  Then $u=g+w$ for some $w\in\cH^s_0(B)$. Moreover, by Lemma \ref{boundedevaluation2}, for all $\phi\in \cH^s_0(B),$ $\phi\geq 0$,
	\begin{align*}
	\cE_s(w,\phi)=\cE_s(u,\phi)-\cE_s(g,\phi)=-\cE_s(g,\phi)=-\int_{B}\tilde{g}(x)\phi(x)\ dx\leq 0,
	\end{align*}
	where $\tilde{g}$ is a smooth function given by
	\begin{align*}
	\tilde{g}(x):=C\int_{\R^{N}\backslash B} \frac{g(y)}{|x-y|^{N+2s}}dy\geq 0\quad \text{ for }x\in B,
	\end{align*}
	for some $C>0$.  In particular, $w\leq 0$ in $\R^N$, by Theorem \ref{green:thm}, and therefore $u\leq 0$ in $B$.
\end{proof}

\appendix

\section{Differential recurrence equation}\label{recurrence}

\begin{proof}[Proof of Lemma \ref{Lap:green}].
Let $s>1$, $y\in B$, $x\in \R^N$, and $x\neq y$, and $\rho$ as in \ref{greenconst-intro}. In the following, differentiation is always w.r.t. $x$. To simplify notation we write $F_s:= |x-y|^{2s-N}$ and $V_s(v):=v^{s-1}(v+1)^{-\frac{N}{2}}$.  

We consider first the case $2s\neq N$. Note that 
\begin{align*}
\nabla F_s=(2s-N)F_{s-1} (x-y)=(2s-N)F_s\frac{x-y}{|x-y|^2}\quad \text{ and }\quad -\Delta\ F_s= (N-2s)2(s-1)F_{s-1},  
\end{align*}
hence
\begin{align}
-\Delta& \cG_{s}(x,y)=-k_{N,s}(\Delta F_s \int_{0}^{\rho}V_s(v)\ dv+2 V_s(\rho)\nabla F_s\cdot\nabla \rho +V_s'(\rho)F_s |\nabla \rho|^2 +F_sV_s(\rho)\Delta \rho ).\label{all}
\end{align}
Note that, for $a\geq 0$,
\begin{align}\label{eq:1}
\int_0^{a} V_s(v)\ dv=\frac{2}{2s-N}\frac{a^{s-1}}{(a+1)^{\frac{N}{2}-1}}-\frac{2(s-1)}{2s-N}\int_0^{a} V_{s-1}(v)\ dv.
\end{align}
Thus, using \eqref{eq:1}, we obtain
\begin{align*}
&-k_{N,s}\Delta F_s \int_0^{\rho}V_s(v)\ dv=\cG_{s-1}(x,y)-k_{N,s}4(s-1)\frac{F_{s}}{|x-y|^2} \frac{\rho^{s-1}}{(\rho+1)^{\frac{N}{2}-1}}.
\end{align*}

Then, $-\Delta\ \cG_{s}=\cG_{s-1}-k_{N,s}4(s-1)P$, where
\begin{align*}
P:=\frac{F_s}{|x-y|^2}  \frac{\rho^{s-1}}{(\rho+1)^{\frac{N}{2}-1}}+\frac{2 V_s(\rho)\nabla F_s\cdot\nabla \rho +F_sV_s'(\rho) |\nabla \rho|^2 +F_sV_s(\rho)\Delta \rho}{4(s-1)}.
\end{align*}
It suffices to show that $P=P_{s-1}$, with $P_{s-1}$ given by \eqref{Psm1}. Note that
\begin{align}
&4(s-1) P=4(s-1)\frac{F_s}{|x-y|^2}  \frac{\rho^{s-1}}{(\rho+1)^{\frac{N}{2}-1}}+2 V_s(\rho)\nabla F_s\cdot\nabla \rho +F_sV_s'(\rho) |\nabla \rho|^2 +F_sV_s(\rho)\Delta \rho\notag\\
&=F_s\Big[\frac{V_s(\rho)(4(s-1)(\rho+1)+2(2s-N)(x-y)\cdot \nabla \rho+|x-y|^2\Delta \rho)}{|x-y|^2}+V_s'(\rho) |\nabla \rho|^2 \Big].\label{harmonic-q1}
\end{align}

To simplify this expression we use
\begin{align*}
V_s'(v)&=(s-1)\frac{v^{s-2}}{(v+1)^{\frac{N}{2}}}-\frac{N}{2}\frac{v^{s-1}}{(v+1)^{\frac{N}{2}+1}}=V_s(v)\frac{(s-1)(v+1)-\frac{N}{2}v}{v(v+1)}
\end{align*}
so that
\begin{align}
&4(s-1)P=F_sV_s(\rho)\Big[\frac{4(s-1)(\rho+1)+2(2s-N)(x-y)\cdot \nabla \rho+|x-y|^2\Delta \rho}{|x-y|^2}\notag\\
&\qquad\qquad\qquad\qquad\qquad\qquad +\frac{(s-1)(\rho+1)-\frac{N}{2}\rho}{\rho(\rho+1)}|\nabla \rho|^2 \Big]\notag\\
&=F_{s-1}V_s(\rho)\Big[\frac{4(s-1)((1-|x|^2)(1-|y|^2)+|x-y|^2)}{|x-y|^2}+2(2s-N)(x-y)\cdot \nabla \rho+|x-y|^2\Delta \rho\notag\\
&\qquad\qquad\qquad  +\frac{(s-1-\frac{N}{2})(1-|x|^2)(1-|y|^2)+(s-1)|x-y|^2}{(1-|x|^2)^2(1-|y|^2)^2+(1-|x|^2)(1-|y|^2)|x-y|^2)}|x-y|^4|\nabla \rho|^2\Big]\label{harmonic-q2}
\end{align}

Direct calculations yield that
\begin{align*}
\Delta \rho
&=\frac{2(1-|y|^2)}{|x-y|^4}\left(-N(|y|^2-2x\cdot y+1)+4(1-x\cdot y)\right)\\
(x-y)\cdot \nabla \rho
&=-2\frac{1-|y|^2}{|x-y|^2} (|x|^2-x\cdot y +1-|x|^2)=-2\frac{(1-|y|^2)(1-x\cdot y)}{|x-y|^2}
\end{align*}
Hence the first three terms in (\ref{harmonic-q2}) reduce to
\begin{align}
\frac{4}{|x-y|^2}[(s-1)(1-2x\cdot y+|x|^2|y|^2)-(1-|y|^2)(\frac{N}{2}(|y|^2-2x\cdot y+1)+(2s-2-N)(1-x\cdot y))]\label{q2-part1}
\end{align}
and the last term in (\ref{harmonic-q2}) reduce to
\begin{align}
4(1-|y|^2)\frac{(s-1-\frac{N}{2})(1-|x|^2)(1-|y|^2)+(s-1)|x-y|^2}{(1-|x|^2)|x-y|^2}. \label{q2-part2}
\end{align}
Combining \eqref{q2-part1}, \eqref{q2-part2} with \eqref{harmonic-q2} we find
\begin{align}
&4(s-1)P=\frac{4F_{s-1}V_s(\rho)}{(1-|x|^2)|x-y|^2}\Big[(s-1)(1-2x\cdot y+|x|^2|y|^2)(1-|x|^2)\notag\\
&\qquad\qquad+(1-|y|^2)\Big(-\frac{N}{2}(|y|^2-2x\cdot y+1)(1-|x|^2)+ (s-1)|x-y|^2\notag\\
&\qquad\qquad\qquad -(2s-2-N)(1-x\cdot y)(1-|x|^2)+ (s-1-\frac{N}{2})(1-|y|^2)(1-|x|^2)\Big)\Big].\label{harmonic-q3}
\end{align}
Note that the bracket in (\ref{harmonic-q3}) reduces to
\begin{align}
(s-1)(|x-y|^2-|x|^2|y|^2(|x|^2-2x\cdot y+|y|^2))=(s-1)|x-y|^2(1-|x|^2|y|^2).\label{harmonic-q4}
\end{align}
We conclude that 
\begin{equation}\label{aim}
P=\frac{V_s(\rho)}{(1-|x|^2)} \frac{1-|x|^2|y|^2}{|x-y|^{2+N-2s}}=\frac{(1-|x|^2)^{s-2}(1-|y|^2)^{s-1}(1-|x|^2|y|^2)}{\Big|x|y|-\frac{y}{|y|}\Big|^{N}}=P_{s-1}(x,y),
\end{equation}
 as claimed.
 \medskip
 
We now consider the case $2s=N$. Since $s>1$ then $N\geq 3$. Note that $k_{N,s-1}=4(s-1)^2k_{N,s}$ and 
\begin{align*}
\cG_{s-1}(x,y)&=k_{N,s-1}|x-y|^{-2}\int_0^{\rho}\frac{v^{\frac{N}{2}-2}}{(v+1)^{\frac{N}{2}}}\ dv=4(s-1)k_{N,s} \frac{\rho^{s-1}}{(\rho+1)^{s-1}|x-y|^2}.
\end{align*}
On the other hand,
\begin{align*}
(-\Delta)\cG_{s}(x,y)&=-k_{N,s}\Delta \left(\int_0^{\rho}\frac{v^{\frac{N-2}{2}}}{(v+1)^{\frac{N}{2}}}\ dv\right)\\
&=4(s-1)k_{N,s}\frac{\rho^{s-1}(1-|y|^2)}{(\rho+1)^{s}|x-y|^4}\left[|y|^2-2x\cdot y+1-\frac{|x-y|^2}{1-|x|^2}\right].
\end{align*}
Hence,
\begin{align*}
&(-\Delta)\cG_{\frac{N}{2}}(x,y)=\cG_{\frac{N-2}{2}}(x,y) \\
&\quad +4(s-1)k_{N,s}\frac{\rho^{s-1}}{(\rho+1)^{s}|x-y|^4}\Bigg[(1-|y|^2)\Big[|y|^2-2x\cdot y+1-\frac{(1-|y|^2)}{\rho}\Big] -(\rho+1)|x-y|^2\Bigg],
\end{align*}
where,
\begin{align*}
(1-|y|^2)&\Big[|y|^2-2x\cdot y+1-\frac{(1-|y|^2)}{\rho}\Big] -(\rho+1)|x-y|^2\\
&=-|y|(|y|^2-2x\cdot y +|x|^2)-\frac{1-|y|^2}{1-|x|^2}|x-y|^2 = -|x-y|^2\Big(|y|^2+\frac{1-|y|^2}{1-|x|^2}\Big).
\end{align*}
Since $\rho+1=[x,y]^2|x-y|^{-2}$ we obtain that $-\Delta\ \cG_{s}=\cG_{s-1}-k_{N,s}4(s-1)P_{s-1}$ with $P_{s-1}$ as given by \eqref{Psm1} and the proof is finished.
\end{proof}

\section{Interchange of derivatives}

In the following we give assumptions on $u$ to guarantee that $(-\Delta)^{\sigma} (-\Delta)u=(-\Delta)(-\Delta)^\sigma u$ for $\sigma\in(0,1)$ in the pointwise sense, see \eqref{fraclaplace}. Let $H_u$ denote the Hessian of $u$.
\begin{lemma}\label{diffrep}
Let $V\subset \R^N$ open, $u:V\to\mathbb R^N$ such that $\|u\|_{C^2(V)}<\infty$, and $w:V\times \R^N\to\R$, $w(x,y):=2u(x)-u(x+y)-u(x-y)$. Then
\begin{align*}
w(x,y)=-\left[\int_0^1\int_0^1 H_u(x+(\tau-t)y)\ d\tau dt\right] y\cdot y\quad \text{ for all }x\in V,\ y\in \R^N,\ x\pm y\in V.
\end{align*}
In particular, $|w(x,y)|\leq \|u\|_{C^2(V)}|y|^2$ for all $x\in V$ and $y\in \R^N$ such that $x\pm y\in V.$
\end{lemma}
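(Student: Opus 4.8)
The plan is to prove the identity by applying the fundamental theorem of calculus twice and then symmetrizing by an affine change of variables. First I would rewrite
\[
-w(x,y)=\bigl(u(x+y)-u(x)\bigr)-\bigl(u(x)-u(x-y)\bigr),
\]
and express each bracket as a line integral of $\nabla u$. Since $u\in C^2(V)$ and the whole segment $\{x+sy:\ s\in[-1,1]\}$ lies in $V$ (which is automatic when $V$ is convex, and in particular when $V=\R^N$, the case used in every application of this lemma), integrating the derivatives of $t\mapsto u(x+ty)$ and $t\mapsto u(x+(t-1)y)$ over $[0,1]$ gives
\[
u(x+y)-u(x)=\int_0^1\nabla u(x+ty)\cdot y\ dt,\qquad u(x)-u(x-y)=\int_0^1\nabla u\bigl(x+(t-1)y\bigr)\cdot y\ dt,
\]
so that $-w(x,y)=\int_0^1\bigl[\nabla u(x+ty)-\nabla u\bigl(x+(t-1)y\bigr)\bigr]\cdot y\ dt$.

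Next I would apply the fundamental theorem of calculus a second time, now to the $C^1$ map $\tau\mapsto\nabla u\bigl(x+(t-1+\tau)y\bigr)$ on $[0,1]$, obtaining
\[
\nabla u(x+ty)-\nabla u\bigl(x+(t-1)y\bigr)=\int_0^1 H_u\bigl(x+(t-1+\tau)y\bigr)\,y\ d\tau.
\]
Substituting this into the previous display and invoking Fubini's theorem (legitimate because $H_u$ is continuous, hence bounded, on the compact segment, so the integrand is bounded on $[0,1]^2$) yields
\[
-w(x,y)=\int_0^1\int_0^1 H_u\bigl(x+(t-1+\tau)y\bigr)y\cdot y\ d\tau\,dt.
\]
Finally the change of variable $t\mapsto 1-t$ replaces $t-1+\tau$ by $\tau-t$ and leaves the $t$-integral over $[0,1]$ unchanged, which produces exactly the claimed formula $w(x,y)=-\bigl[\int_0^1\int_0^1 H_u(x+(\tau-t)y)\,d\tau\,dt\bigr]y\cdot y$.

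The pointwise bound is then immediate: for every point $z$ on the segment one has $|H_u(z)y\cdot y|\le\|u\|_{C^2(V)}|y|^2$, so integrating over $(\tau,t)\in[0,1]^2$ gives $|w(x,y)|\le\|u\|_{C^2(V)}|y|^2$. There is no serious obstacle here; the only points that require a little care are the implicit hypothesis that the segment $[x-y,x+y]$ be contained in $V$ (needed for the two applications of the fundamental theorem of calculus, and harmless in the applications where $V=\R^N$) and keeping track of the affine reparametrizations so that the final integrand appears in the symmetric form $H_u(x+(\tau-t)y)$ rather than $H_u(x+(t-1+\tau)y)$.
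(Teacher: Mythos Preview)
Your argument is correct and essentially identical to the paper's: the paper writes $w(x,y)=u(x)-u(x+y)-(u(x)-u(x-y))$, applies the integral Mean Value Theorem once to get $w(x,y)=\int_0^1[\nabla u(x+y-ty)-\nabla u(x-ty)]\,dt\cdot(-y)$, and then says ``a second application of the Mean Value Theorem yields the result''---your version just spells out that second step and the reparametrization $t\mapsto 1-t$ explicitly. Your remark that the argument tacitly requires the segment $[x-y,x+y]\subset V$ is a fair observation; the paper does not flag this, and in its applications $V=\R^N$.
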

 \begin{proof}
Since $w(x,y)=u(x)-u(x+y)-(u(x)-u(x-y))$ we have by the Mean Value Theorem that $w(x,y)= \int_0^1[\nabla u(x+y-ty) - \nabla u(x-ty)]\ dt\cdot (-y)$. A second application of the Mean Value Theorem yields the result.
\end{proof}

The next proposition provides conditions to allow the interchange between derivatives and fractional Laplacians. The main difficulty in the proof relies on the fact that $u$ is allowed to have unbounded or discontinuous derivatives outside a domain $\Omega$. 

\begin{prop}\label{interchange-der}
Let $\Omega\subset \R^N$ open, $\sigma\in (0,1)$, and $u\in C^{3}(\Omega)\cap\cL^1_{\sigma}\cap W^{1,1}_{loc}(\mathbb R^N)$. If $\partial_1 u\in \cL^1_{\sigma}$, then $\partial_1(-\Delta)^\sigma u(x) = (-\Delta)^\sigma \partial_1u(x)$ pointwise for all $x\in \Omega$, where $(-\Delta)^{\sigma}u$ is evaluated as in (\ref{fraclaplace}). In particular, if $m\in \N_0$, $u\in C^{2m+2}(\Omega)\cap \cL^1_{\sigma}\cap W^{2m,1}_{loc}(\mathbb R^N)$, and $\partial^\alpha u\in \cL^1_{\sigma}$ for all $|\alpha|\leq 2m$, then 
\begin{align*}
(-\Delta)^{m+\sigma}u(x)=(-\Delta)^{\sigma}\,[\,(-\Delta)^m u(x)\,]=(-\Delta)^m\,[\,(-\Delta)^{\sigma}u(x)\,]\qquad \text{ for all }x\in \Omega. 
\end{align*}
\end{prop}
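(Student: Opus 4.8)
The plan is to first establish the single-derivative identity $\partial_1(-\Delta)^\sigma u=(-\Delta)^\sigma\partial_1u$ and then to obtain the chained statement by peeling off derivatives one at a time. The workhorse representation is the symmetric second difference,
$(-\Delta)^\sigma v(x)=\tfrac{c_{N,\sigma}}{2}\int_{\R^N}\tfrac{2v(x)-v(x+y)-v(x-y)}{|y|^{N+2\sigma}}\,dy$,
which coincides with the principal value in \eqref{fraclaplace} whenever $v\in\cL^1_\sigma$ and $v$ is $C^2$ near $x$; in particular, since $\partial_1u\in C^{2}(\Omega)\cap\cL^1_\sigma$, the quantity $(-\Delta)^\sigma\partial_1u(x)$ is well-defined for every $x\in\Omega$ and admits this representation.

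Fix $x_0\in\Omega$ and choose $r>0$ with $\overline{B_{2r}(x_0)}\subset\Omega$. For $x$ in a small ball about $x_0$ and $|h|$ small, split $(-\Delta)^\sigma u(x)=I(x)+J(x)$, where $I$ integrates over $\{|y|<r\}$ and $J$ over $\{|y|\ge r\}$. For the near part: by Lemma \ref{diffrep} applied to $u$, the integrand of $I$ is bounded by $\|u\|_{C^2(\overline{B_{2r}(x_0)})}|y|^{2-N-2\sigma}$, hence $I$ is finite; and the difference quotient $h^{-1}\big(w(x+he_1,y)-w(x,y)\big)$ equals $2\partial_1u(\xi)-\partial_1u(\xi+y)-\partial_1u(\xi-y)$ for some $\xi$ on the segment by the mean value theorem, which by Lemma \ref{diffrep} applied to $\partial_1u$ is bounded by $\|\partial_1u\|_{C^2(\overline{B_{2r}(x_0)})}|y|^{2}$ uniformly in small $h$. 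Dominated convergence then gives $\partial_1I(x)=\tfrac{c_{N,\sigma}}{2}\int_{\{|y|<r\}}\tfrac{2\partial_1u(x)-\partial_1u(x+y)-\partial_1u(x-y)}{|y|^{N+2\sigma}}\,dy$.

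For the far part, the kernel $k_r(y):=|y|^{-N-2\sigma}\mathbf 1_{\{|y|\ge r\}}$ is bounded, even, lies in $L^1(\R^N)\cap L^\infty(\R^N)$, and satisfies $k_r(y)\le C_r(1+|y|)^{-N-2\sigma}$; a change of variables gives $J(x)=c_{N,\sigma}\big(\kappa_r\,u(x)-(u\ast k_r)(x)\big)$ with $\kappa_r=\int_{\{|y|\ge r\}}|y|^{-N-2\sigma}\,dy<\infty$. The $\cL^1_\sigma$-membership of $u$ and $\partial_1u$ makes $u\ast k_r$ and $(\partial_1u)\ast k_r$ well-defined \emph{continuous} functions on $\R^N$: one splits the convolution into a compactly supported $L^1$ piece (continuous by dominated convergence, using that $k_r$ is continuous off the null set $\{|y|=r\}$) and a tail piece made uniformly small via the $\cL^1_\sigma$-bound and the decay of $k_r$. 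Since $u\in W^{1,1}_{loc}(\R^N)$, the integration-by-parts characterization of the weak derivative yields $\partial_1(u\ast k_r)=(\partial_1u)\ast k_r$ in $\cD'$, and continuity of the right-hand side upgrades this to a pointwise identity; as $u$ is classically $C^1$ on $\Omega$, reversing the change of variables gives $\partial_1J(x)=\tfrac{c_{N,\sigma}}{2}\int_{\{|y|\ge r\}}\tfrac{2\partial_1u(x)-\partial_1u(x+y)-\partial_1u(x-y)}{|y|^{N+2\sigma}}\,dy$. Adding the two pieces yields $\partial_1(-\Delta)^\sigma u(x_0)=(-\Delta)^\sigma\partial_1u(x_0)$, and since $x_0\in\Omega$ was arbitrary the first assertion follows. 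I expect this far part, namely justifying $\partial_1(u\ast k_r)=(\partial_1u)\ast k_r$ pointwise when $u$ is merely $W^{1,1}_{loc}$ with $\cL^1_\sigma$-decay rather than globally integrable, to be the main obstacle; the near-singularity estimate is clean thanks to Lemma \ref{diffrep}.

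For the general statement, write $(-\Delta)^m=\sum_{|\alpha|=2m}c_\alpha\partial^\alpha$ and apply the first assertion $2m$ times, peeling one factor of $\partial^\alpha$ through $(-\Delta)^\sigma$ at each step. At every intermediate step the function $\partial^\beta u$ with $|\beta|\le 2m-1$ satisfies $\partial^\beta u\in C^{3}(\Omega)\cap\cL^1_\sigma\cap W^{1,1}_{loc}(\R^N)$ with $\nabla\partial^\beta u\in\cL^1_\sigma$ — these requirements match \emph{exactly} the hypotheses $u\in C^{2m+2}(\Omega)\cap W^{2m,1}_{loc}(\R^N)$ and $\partial^\gamma u\in\cL^1_\sigma$ for all $|\gamma|\le 2m$. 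This gives $\partial^\alpha[(-\Delta)^\sigma u]=(-\Delta)^\sigma[\partial^\alpha u]$ pointwise in $\Omega$ for every $|\alpha|=2m$, and summing against $c_\alpha$ and using linearity of the singular integral yields $(-\Delta)^m[(-\Delta)^\sigma u]=(-\Delta)^\sigma[(-\Delta)^m u]$; both expressions are well-defined since $(-\Delta)^m u=\sum_{|\alpha|=2m}c_\alpha\partial^\alpha u\in C^2(\Omega)\cap\cL^1_\sigma$, and both agree with $(-\Delta)^{m+\sigma}u$ by definition, which completes the argument.
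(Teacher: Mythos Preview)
Your proof is correct and follows essentially the same architecture as the paper's: both use the symmetric second-difference representation, split at a fixed radius into a near part and a far part, handle the near part via the mean value theorem and the $C^2$-bound of Lemma~\ref{diffrep} applied to $\partial_1 u$ (yielding the integrable majorant $|y|^{2-N-2\sigma}$), and pass to the limit by dominated convergence.

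The only substantive difference is in the far part. The paper writes the difference quotient using the fundamental theorem of calculus, $u(x+te_1\pm y)-u(x\pm y)=t\int_0^1\partial_1u(x+ste_1\pm y)\,ds$ (valid thanks to $W^{1,1}_{loc}$), and then applies dominated convergence directly with a majorant built from $|\partial_1u|/(1+|\cdot|^{N+2\sigma})\in L^1$. Your route through the convolution $u\ast k_r$, the distributional identity $\partial_1(u\ast k_r)=(\partial_1u)\ast k_r$, and the continuity upgrade is equally valid but more circuitous; the Fubini step and the pointwise upgrade both require exactly the $\cL^1_\sigma$ control you invoke, so the ``main obstacle'' you flag is genuinely surmountable. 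The paper's FTC argument is shorter and avoids the distribution-theoretic detour; your convolution reformulation, on the other hand, makes the role of the decay hypothesis more transparent. Your explicit iteration for the ``in particular'' clause is also a clean addition that the paper leaves implicit.
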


\begin{proof}
Let $u\in C^{3}(\Omega)\cap \cL^1_{\sigma}\cap W^{1,1}_{loc}(\mathbb R^N)$ and $\partial_1 u\in C^{2}(\Omega)\cap \cL^1_{\sigma}$. In the following all derivatives $\partial_1$ are taken with respect to $x$. By \cite[Lemma 2.1]{FW15} we have that
\begin{align*}
 (-\Delta)^\sigma u(x)=c_{N,\sigma}P.V. \int_{\R^N}\frac{u(x)-u(y)}{|x-y|^{N+2\sigma}}\ dxdy = c_{N,\sigma}\int_{\R^N}\frac{2u(x)-u(x-y)-u(x+y)}{|y|^{N+2\sigma}}\ dxdy,
\end{align*}
where the integral on the right does not have a principal value (cf. \cite[Lemma 3.2]{NPV11}). Let $H:\Omega\times\R^N\setminus\{0\}\to \R$ and $h_t:\Omega\times \R^N\setminus\{0\}\to\R$ be given by 
\begin{align*}
 H(x,y):=\frac{2 u(x)-u(x+y)- u(x-y)}{|y|^{N+2\sigma}},\quad h_t(x,y):= \frac{H(x+te_1,y)-H(x,y)}{t},\quad t\in\R\backslash\{0\}.
\end{align*}

Fix $x\in\Omega$ and $V$ an open set with $\overline{V}\subset \Omega$ and $x\in V$. Let $T,\varepsilon\in(0,1)$ such that $x+y+te_1\in V$ for all $0<|t|<T$ and $|y|<\varepsilon.$ Set $U:=B_{\varepsilon}(0)$. We show separately that 
\begin{align}
&\lim_{t\to 0}\int_{U} h_t(x,y)\ dy= \int_{U}\partial_1 H(x,y)\ dy\quad \text{ and }\label{conc1}\\
&\lim_{t\to 0}\int_{\R^N\backslash U} h_t(x,y)\ dy= \int_{\R^N\backslash U}\partial_1 H(x,y)\ dy\label{conc2}.
\end{align}

By the Mean Value Theorem, for every $ 0<|t |<T$ there is $|t_0|<t$ and $\xi:=x+t_0e_1\in V$ such that 
$h_t(x,y)=\partial_1H(\xi,y)$ for $y\in U$.  Then, by Lemma \ref{diffrep}, $|\partial_1H(\xi,y)|\leq  \|u\|_{C^{3}(V)}|y|^{-2\sigma-N+2}\in L^1(U)$. Thus, by the Dominated Convergence Theorem, $\partial_1 H(x,\cdot)\in L^1(U)$ and \eqref{conc1} holds.

Moreover, if $A:=\{|y-ste_1-x|\geq \varepsilon\}$, then
\begin{align*}
 \left|\frac{\partial_1 u(y)}{|y-ste_1-x|^{N+2\sigma}}1_A(y)\right|\leq\frac{|\partial_1 u(y)|}{1+|y|^{N+2\sigma}}\frac{1+|y|^{N+2\sigma}}{|y-ste_1-x|^{N+2\sigma}}1_A(y)\leq K\frac{|\partial_1 u(y)|}{1+|y|^{N+2\sigma}}=:f(y),
\end{align*}
where $K>0$ is a constant depending only on $V,N,\varepsilon$, and $\sigma$. Since $f\in L^1(\R^N)$ then, by the Dominated Convergence Theorem,
\begin{align*}
\lim_{t\to 0}\int_{\R^N}\int_0^1 \frac{\partial_1 u(y)}{|y-ste_1-x|^{N+2\sigma}}1_{\{|y-ste_1-x|\geq \varepsilon\}}(y)\ dsdy=\int_{\R^N} \frac{\partial_1 u(y)}{|y-x|^{N+2\sigma}}1_{\{|y-x|\geq \varepsilon\}}(y)\ dy
\end{align*}
or equivalently,
\begin{align}\label{U1}
\int_{\R^N} \frac{\partial_1 u(x\pm y)}{|y|^{N+2\sigma}}1_{\{|y|\geq \varepsilon\}}\ dy&= \lim_{t\to 0}\int_{\R^N}\int_0^1 \frac{\partial_1 u(ste_1+x\pm y)}{|y|^{N+2\sigma}}1_{\{|y|\geq \varepsilon\}}\ dsdy\nonumber\\
&= \lim_{t\to 0}\int_{\R^N}\frac{u(x+te_1\pm y)-u(x\pm y)}{|y|^{N+2\sigma}}1_{\{|y|\geq \varepsilon\}}\ dy.
\end{align}

Since it trivially holds that
\begin{align}\label{U2}
 \lim_{t\to0}\frac{1}{t}\int_{\R^N\backslash U}\frac{u(x+te_1)-u(x)}{|y|^{N+2\sigma}}\ dy
 =\int_{\R^N\backslash U}\frac{\partial_1u(x)}{|y|^{N+2\sigma}}\ dy,
\end{align}
then \eqref{conc2} follows from \eqref{U2} and \eqref{U1}.
\end{proof}

To perform the integration by parts we use the following standard regularity result. 
\begin{lemma}\label{reg-c2m} Let $\Omega\subset \R^N$ open, $m\in \N$, $\sigma\in(0,1)$, $s=m+\sigma$, and let $u\in C^{2s+\alpha}_{loc}(\Omega)\cap C^s(\R^N)\cap\cL^1_s$ for some $\alpha>0$. Then $(-\Delta)^{\sigma}u\in C^{2m}_{loc}(\Omega)\cap {C^{m-\sigma}(\mathbb R^N)}$.
\end{lemma}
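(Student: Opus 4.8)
The plan is to prove interior regularity and global regularity separately. For the interior statement, I would fix a point $x_0\in\Omega$ and a ball $B_{2r}(x_0)\subset\subset\Omega$, and split $u=u_1+u_2$ where $u_1:=u\,\eta$ with $\eta\in C^\infty_c(B_{2r}(x_0))$ a cutoff equal to $1$ on $B_r(x_0)$, and $u_2:=u(1-\eta)$. Since $u\in C^{2s+\alpha}_{loc}(\Omega)$, the function $u_1$ is $C^{2s+\alpha}$ on $\R^N$ with compact support, so $(-\Delta)^\sigma u_1$ is $C^{2s+\alpha-2\sigma}=C^{2m+\alpha}$ on $\R^N$ by the standard regularity theory for the fractional Laplacian on Hölder spaces (e.g. \cite{S07}, applied iteratively using $(-\Delta)^\sigma=(-\Delta)^{-m}(-\Delta)^{m+\sigma}$ on the compactly supported piece, or directly since $\sigma\in(0,1)$); in particular $(-\Delta)^\sigma u_1\in C^{2m}_{loc}$. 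For $u_2$, which vanishes on $B_r(x_0)$, the value $(-\Delta)^\sigma u_2(x)$ for $x\in B_{r/2}(x_0)$ is given by the absolutely convergent integral $-c_{N,\sigma}\int_{\R^N}\frac{u_2(y)}{|x-y|^{N+2\sigma}}\,dy$, and since $\operatorname{dist}(x,\supp u_2)\geq r/2$ while $u_2\in\cL^1_\sigma$, one may differentiate under the integral sign arbitrarily often in $x$: the kernel $x\mapsto |x-y|^{-N-2\sigma}$ is smooth away from $y$ with derivative bounds uniform for $x\in B_{r/2}(x_0)$ and integrable against $\cL^1_\sigma$-functions (the same estimate as in Lemma \ref{boundedevaluationexistence}). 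Hence $(-\Delta)^\sigma u_2\in C^\infty(B_{r/2}(x_0))$, and therefore $(-\Delta)^\sigma u=(-\Delta)^\sigma u_1+(-\Delta)^\sigma u_2\in C^{2m}(B_{r/2}(x_0))$. Since $x_0$ was arbitrary, $(-\Delta)^\sigma u\in C^{2m}_{loc}(\Omega)$.

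For the global statement $(-\Delta)^\sigma u\in C^{m-\sigma}(\R^N)$, I would use that $u\in C^s(\R^N)=C^{m,\sigma}(\R^N)$ together with $u\in\cL^1_\sigma$. Write $(-\Delta)^\sigma u=(-\Delta)^\sigma[(-\Delta)^{m-1}$-type reductions$]$; more precisely, since $u\in C^{m,\sigma}(\R^N)$, each derivative $\partial^\alpha u$ with $|\alpha|=m$ lies in $C^{0,\sigma}(\R^N)$ and is bounded, and one expects $(-\Delta)^\sigma(\partial^\alpha u)$ to be well-defined and continuous (indeed in $C^{0}(\R^N)$, with a modulus of continuity coming from the $\sigma$-Hölder bound combined with the decay of $u$ at infinity). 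The cleanest route is to invoke the mapping property $(-\Delta)^\sigma:C^{k,\beta}(\R^N)\cap\cL^1_\sigma\to C^{k,\beta-2\sigma}(\R^N)$ when $\beta>2\sigma$ and $(-\Delta)^\sigma:C^{k,\beta}\to C^{k-1,\beta+1-2\sigma}$ when $\beta<2\sigma$ (Schauder-type estimates for the fractional Laplacian, see \cite{S07}), applied with $k=m$, $\beta=\sigma$: since $\sigma<2\sigma$ one lands in $C^{m-1,\sigma+1-2\sigma}(\R^N)=C^{m-1,1-\sigma}(\R^N)=C^{m-\sigma}(\R^N)$. The $\cL^1_\sigma$ hypothesis is exactly what makes the far-field part of the defining integral finite and gives the required uniform control.

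The main obstacle I anticipate is the global $C^{m-\sigma}(\R^N)$ claim rather than the interior one: near $\partial\Omega$ and at infinity, $u$ has only $C^{m,\sigma}$ regularity and only $\cL^1_\sigma$ (not better) integrability, so one cannot localize as freely as in the interior argument, and one must rely on the sharp Schauder estimate for $(-\Delta)^\sigma$ on the whole space. Care is needed because $2\sigma$ may exceed or be smaller than the Hölder exponent, so the statement "$C^s$ in, $C^{m-\sigma}$ out" is precisely the borderline (loss of exactly $2\sigma$ derivatives, reorganized as one full derivative plus a shift of the Hölder exponent from $\sigma$ to $1-\sigma$); I would make sure the cited estimate applies with the $\cL^1_\sigma$ tail condition and does not require compact support. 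Everything else is a routine differentiation-under-the-integral-sign argument combined with the kernel bounds already established in Lemma \ref{boundedevaluationexistence} and Lemma \ref{diffrep}.
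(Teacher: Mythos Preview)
Your proposal is correct and is essentially what the paper intends: the paper omits the proof entirely, simply stating that one argues as in \cite[Propositions~2.6 and 2.7]{S07}, and what you have written is precisely a spelling-out of the Silvestre-type argument (localize with a cutoff, treat the compactly supported piece by H\"older mapping properties of $(-\Delta)^\sigma$, and treat the far-away piece by differentiation under the integral sign against the smooth kernel). One small point worth tightening: the hypothesis is $u\in\cL^1_s$, not $u\in\cL^1_\sigma$, and since $\sigma<s$ the inclusion $\cL^1_\sigma\subset\cL^1_s$ goes the wrong way; you should note explicitly that $u\in C^s(\R^N)$ (with the standard H\"older norm) forces $u$ to be bounded, hence $u\in L^\infty(\R^N)\subset\cL^1_\sigma$, which is what you actually need for the global part and for the tail integrals.
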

The proof can be done by arguing as in the proof of \cite[Propositions 2.6 and 2.7]{S07} and hence we omit it. 

\begin{lemma}\label{ibyp}
Let $\sigma\in(0,1)$, $m\in\N,$ and $s=m+\sigma>1$. If $u\in W^{2,1}(B)$ satisfies $u=\nabla u=0$ on $\partial B$ in the trace sense, then
 \begin{align}\label{ibyp:eq}
  \int_{B} u\, (-\Delta)^s \varphi\ dx = \int_{B} -\Delta u\,(-\Delta)^{s-1} \varphi\ dx.
 \end{align}
This is in particular the case if $u\in W^{2,1}(\R^N)$ with $\supp\ u\subset \overline{B}$.  If $u\in C^{2s+\alpha}_{loc}(B)\cap C_0^{s}(B)$ for some $\alpha\in(0,1)$, then
 \begin{align}\label{ibyp:eq:2}
  \int_{\R^N} u\, (-\Delta)^s \varphi\ dx= \int_{\R^N} (-\Delta)^{m}(-\Delta)^\sigma u\, \varphi\ dx\qquad \text{for all $\varphi\in C^\infty_c(B)$,}
 \end{align}
and if $u\in \cH^s_0(B)$ then $\int_{\R^N} u\, (-\Delta)^s \varphi\ dx= \cE_s(u\,,\,\varphi)$ for all $\varphi\in \cH^{s}_0(B)$.
\end{lemma}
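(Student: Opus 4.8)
I would prove the three assertions in the order stated. The common thread is that, on smooth compactly supported test functions (and, for the last part, on $\cH^s_0(B)$), the operator $(-\Delta)^s$ splits as in Remark \ref{commutation} and its pointwise, distributional, and Fourier definitions agree on the relevant classes (Remark \ref{temp:dis}, Lemma \ref{boundedevaluationexistence}). The genuinely delicate identity is the second one, \eqref{ibyp:eq:2}: since $u$ is only $C^s$ up to $\partial B$ but smooth inside $B$, one cannot integrate all of $(-\Delta)^s$ by parts at once; the nonlocal factor $(-\Delta)^\sigma$ must be transferred off $\varphi$ first by duality — which forces one to know that $(-\Delta)^\sigma u$ is an honest function with the interior regularity of Lemma \ref{reg-c2m} — and only afterwards may the remaining local polyharmonic part be handled classically, using that $\varphi$ has compact support inside $B$. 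The other two identities are then a routine approximation and a density/Plancherel argument.

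\emph{The identity \eqref{ibyp:eq}.} Fix $\varphi\in C^\infty_c(B)$ and set $w:=(-\Delta)^{s-1}\varphi$, so that $(-\Delta)^s\varphi=-\Delta w$ by Remark \ref{commutation}. Since $\varphi\in\cS$, $w$ is smooth and $w,\Delta w$ decay polynomially, hence lie in $L^\infty(\R^N)$ (alternatively, Lemma \ref{boundedevaluationexistence}). If $u\in W^{2,1}(B)$ vanishes together with $\nabla u$ on $\partial B$ in the trace sense, then $u$ lies in $W^{2,1}_0(B)$, the closure of $C^\infty_c(B)$ in $W^{2,1}(B)$, so pick $u_n\in C^\infty_c(B)$ with $u_n\to u$ in $W^{2,1}(B)$. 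For each $n$, two integrations by parts (no boundary terms, as $\supp u_n\subset\subset B$) give $\int_B u_n(-\Delta w)\,dx=\int_B(-\Delta u_n)\,w\,dx$; letting $n\to\infty$ and using $w,\Delta w\in L^\infty(B)$ yields \eqref{ibyp:eq}. If instead $u\in W^{2,1}(\R^N)$ with $\supp u\subset\overline B$, the traces of $u$ and $\nabla u$ on $\partial B$ vanish, so this case is included.

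\emph{The identity \eqref{ibyp:eq:2}.} Let $u\in C^{2s+\alpha}_{loc}(B)\cap C^s_0(B)$ and $\varphi\in C^\infty_c(B)$. Then $u\in\cL^1_\sigma$ and the pointwise fractional Laplacian $w:=(-\Delta)^\sigma u$ is defined on all of $\R^N$ (interior $C^{2s+\alpha}$-regularity inside $B$; $u\equiv0$ near $\R^N\setminus\overline B$; and $u\in C^{m,\sigma}$ with $m\ge1$ near $\partial B$); moreover $w\in C^{2m}_{loc}(B)\cap C^{m-\sigma}(\R^N)$ by Lemma \ref{reg-c2m}, and the distribution $(-\Delta)^\sigma u$ is represented by this $w$. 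Writing $(-\Delta)^s\varphi=(-\Delta)^\sigma\psi$ with $\psi:=(-\Delta)^m\varphi\in C^\infty_c(B)$ (Remark \ref{commutation}) and using Remark \ref{temp:dis}(2),
\[
\int_{\R^N}u\,(-\Delta)^s\varphi\,dx=\int_{\R^N}u\,(-\Delta)^\sigma\psi\,dx=\pair{(-\Delta)^\sigma u}{\psi}=\int_{\R^N}w\,(-\Delta)^m\varphi\,dx.
\]
Finally, since $(-\Delta)^m\varphi$ is supported in $\supp\varphi\subset\subset B$ and $w\in C^{2m}_{loc}(B)$, an ordinary integration by parts on a smooth set $\Omega'$ with $\supp\varphi\subset\Omega'\subset\subset B$ (all boundary terms vanishing, because $\varphi$ and its derivatives vanish on $\partial\Omega'$) gives $\int_{\R^N}w\,(-\Delta)^m\varphi\,dx=\int_{\R^N}(-\Delta)^m(-\Delta)^\sigma u\,\varphi\,dx$, which is \eqref{ibyp:eq:2}.

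\emph{The variational identity.} For $u,\varphi\in\cH^s_0(B)\subset H^s(\R^N)$ one reads $\int_{\R^N}u\,(-\Delta)^s\varphi\,dx$ as the $H^{-s}$–$H^s$ pairing with $(-\Delta)^s\varphi:=\cF^{-1}(|\cdot|^{2s}\cF\varphi)\in H^{-s}(\R^N)$. If $\varphi\in C^\infty_c(B)$, then $\varphi\in C^{2m+2}_c(B)$, so Lemma \ref{existence} and the symmetry of $\cE_s$ give $\int_{\R^N}u\,(-\Delta)^s\varphi\,dx=\cE_s(\varphi,u)=\cE_s(u,\varphi)$. For general $\varphi\in\cH^s_0(B)$, choose $\varphi_n\in C^\infty_c(B)$ with $\varphi_n\to\varphi$ in $\cH^s_0(B)$ (Remark \ref{weaksolution}); then $(-\Delta)^s\varphi_n\to(-\Delta)^s\varphi$ in $H^{-s}(\R^N)$, so the left-hand sides converge, while $\cE_s(u,\varphi_n)\to\cE_s(u,\varphi)$ by continuity of $\cE_s$, and the identity passes to the limit. (Equivalently one may invoke Plancherel together with the Fourier computation in the proof of Proposition \ref{justification}.)
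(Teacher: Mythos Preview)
Your proof is correct and follows essentially the same route as the paper. The only noticeable methodological difference is in \eqref{ibyp:eq:2}: the paper first observes the decay $|(-\Delta)^\sigma u(x)|\le C(1+|x|^{N+2\sigma})^{-1}$ to get $(-\Delta)^\sigma u\in L^2(\R^N)$ and then transfers $(-\Delta)^\sigma$ by Plancherel, whereas you transfer it via the $\cL^1_\sigma$-distributional pairing of Remark~\ref{temp:dis}; both arguments are equivalent here, and your more detailed justification of the last variational identity (density plus the $H^{-s}$--$H^s$ interpretation) usefully spells out what the paper leaves to Lemma~\ref{existence}.
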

\begin{proof}
Equality \eqref{ibyp:eq} follows from two integrations by parts, since $u\equiv 0$ in $\R^N\backslash B$ and $\nabla u =0$ on $\partial B$.  For \eqref{ibyp:eq:2}, note that $u\in C^{2s+\alpha}_{loc}(B)\cap C_0^{s}(B)$ implies that $(-\Delta)^\sigma u \in C^{2m}_{loc}(B)\cap {C^{m-\sigma}(\mathbb R^N)}$ by Lemma \ref{reg-c2m}, since $s>1$.  Moreover, since $u\equiv 0$ in $\R^N\backslash B$, there is $C>0$ such that $|\Delta^\sigma u(x)|\leq C({1+|x|^{N+2\sigma}})^{-1}$ for all $x\in \R^N.$  In particular, $(-\Delta)^\sigma u \in L^2(\R^N)$. Using Fourier transform, integration by parts, and the fact that $\varphi$ has compact support on $B$, we obtain
\begin{align*}
  \int_{\R^N} u(x)(-\Delta)^s \varphi(x)\ dx = \int_{\R^N} (-\Delta)^\sigma u(x) (-\Delta)^{m}\varphi(x)\ dx=\int_{\R^N} (-\Delta)^{m}(-\Delta)^\sigma u(x) \varphi(x)\ dx.
\end{align*}
The last claim follows from Lemma \ref{existence}.
\end{proof}

\begin{lemma}\label{ibyp:RN}
 Let $s>1$ and $u\in H^{2}_{loc}(\R^N)$ such that $\Delta u \in \cL^1_{s-1}$. Then,
 \begin{align}\label{ibyp:RN:eq}
  \int_{\R^N} u\,(-\Delta)^s \varphi\ dx = \int_{\R^N} -\Delta u\,(-\Delta)^{s-1} \varphi\ dx\qquad \text{ for all }\varphi\in C^\infty_c(\R^N).
 \end{align}
 \end{lemma}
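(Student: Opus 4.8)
The plan is to reduce the identity to the self-adjointness of the ordinary Laplacian, using a smooth cutoff at infinity to compensate for the fact that $u$ carries only local regularity while the relevant fractional quantities merely decay (rather than being compactly supported). Throughout, $B_\rho$ denotes the ball of radius $\rho$ centered at the origin.

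First I would record the algebraic identity $(-\Delta)^s\varphi=-\Delta g$ with $g:=(-\Delta)^{s-1}\varphi$; this is immediate on the Fourier side since $|\xi|^{2s}=|\xi|^2|\xi|^{2(s-1)}$, and $g\in C^\infty(\R^N)$. Hence the left-hand side of \eqref{ibyp:RN:eq} equals $-\int_{\R^N}u\,\Delta g\,dx$, and it suffices to prove the Green-type identity $\int_{\R^N}u\,\Delta g\,dx=\int_{\R^N}(\Delta u)\,g\,dx$. Since $(-\Delta)^s\varphi\in L^\infty(\R^N)$ with $|(-\Delta)^s\varphi(x)|\le C(1+|x|^{N+2s})^{-1}$ by Lemma \ref{boundedevaluationexistence}, absolute convergence of the left-hand side of \eqref{ibyp:RN:eq} amounts to $u\in\cL^1_s$, which we assume (it holds in all applications of the lemma).

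Next I would establish the decay estimates
\[
|g(x)|\leq \frac{C}{1+|x|^{N+2(s-1)}},\qquad |\nabla g(x)|\leq \frac{C}{1+|x|^{N+2s-1}},\qquad |\Delta g(x)|=|(-\Delta)^s\varphi(x)|\leq \frac{C}{1+|x|^{N+2s}},
\]
valid for all $x\in\R^N$. The first and third follow from Lemma \ref{boundedevaluationexistence} applied to $\varphi\in\cS$ with parameters $s-1$ and $s$ respectively, recalling $\Delta g=-(-\Delta)^s\varphi$. For the gradient, $\nabla g=(-\Delta)^{s-1}(\nabla\varphi)$, and the extra power of decay over the bound of Lemma \ref{boundedevaluationexistence} comes from the cancellation $\int_{\R^N}\nabla\varphi\,dx=0$ (and, after peeling off the integer part of $s-1$, vanishing of the low-order moments of $(-\Delta)^{\lfloor s-1\rfloor}\nabla\varphi$), which kills the leading term in the Taylor expansion of the Riesz-type kernel; this is a routine computation. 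In particular $g\,\Delta u\in L^1(\R^N)$ because $\Delta u\in\cL^1_{s-1}$, so the right-hand side of \eqref{ibyp:RN:eq} is absolutely convergent, and $u\,\Delta g\in L^1(\R^N)$ since $u\in\cL^1_s$.

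Finally I would run the cutoff argument. Fix $\chi\in C^\infty_c(B_2)$ with $\chi\equiv1$ on $B_1$ and $0\le\chi\le1$, and put $\chi_R(x):=\chi(x/R)$, so that $\chi_R g\in C^\infty_c(\R^N)$, $|\nabla\chi_R|\leq C/R$, $|\Delta\chi_R|\leq C/R^2$, with $\nabla\chi_R$ and $\Delta\chi_R$ supported in $B_{2R}\setminus B_R$. Since $u\in H^2_{loc}(\R^N)$, two integrations by parts against the test function $\chi_R g$ give $\int_{\R^N}u\,\Delta(\chi_R g)\,dx=\int_{\R^N}(\Delta u)\,\chi_R g\,dx$. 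Expanding $\Delta(\chi_R g)=\chi_R\Delta g+2\nabla\chi_R\cdot\nabla g+g\,\Delta\chi_R$ and inserting the decay estimates, the two correction terms are bounded on $B_{2R}\setminus B_R$ by $CR^{-N-2s}\leq C'(1+|x|^{N+2s})^{-1}$, so
\[
\Big|\int_{\R^N}u\,\big(2\nabla\chi_R\cdot\nabla g+g\,\Delta\chi_R\big)\,dx\Big|\leq C\int_{B_{2R}\setminus B_R}\frac{|u(x)|}{1+|x|^{N+2s}}\,dx\longrightarrow 0\quad\text{as }R\to\infty,
\]
being the tail of the convergent integral $\|u\|_{\cL^1_s}$. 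By dominated convergence, using $|u\,\chi_R\Delta g|\leq|u\,\Delta g|\in L^1$ and $|(\Delta u)\,\chi_R g|\leq|(\Delta u)\,g|\in L^1$, one gets $\int u\,\chi_R\Delta g\,dx\to\int u\,\Delta g\,dx$ and $\int(\Delta u)\,\chi_R g\,dx\to\int(\Delta u)\,g\,dx$; passing to the limit gives $\int_{\R^N}u\,\Delta g\,dx=\int_{\R^N}(\Delta u)\,g\,dx$, which rearranges to \eqref{ibyp:RN:eq}. I expect the decay of $\nabla g$ to be the main technical point: the bound $|\nabla g(x)|\le C|x|^{-N-2(s-1)}$ furnished directly by Lemma \ref{boundedevaluationexistence} is exactly one power too weak, as it would only estimate the correction terms by $O(R)\cdot o(1)$ and the argument would not close; one genuinely has to exploit $\int_{\R^N}\nabla\varphi\,dx=0$ to gain the extra power and then keep precise track of the powers of $R$ in the correction terms.
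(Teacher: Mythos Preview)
Your approach is essentially the paper's: set $g=\psi:=(-\Delta)^{s-1}\varphi$, truncate by a smooth cutoff, integrate by parts on the compactly supported truncation, and pass to the limit. The substantive difference is the choice of cutoff and, as a consequence, the gradient decay you need. You take scaled cutoffs $\chi_R(x)=\chi(x/R)$, whose derivative annulus has width $\sim R$; with that choice the direct bound $|\nabla g|\le C(1+|x|^{N+2(s-1)})^{-1}$ from Lemma~\ref{boundedevaluationexistence} is indeed one power short, and you correctly recover the missing power via the cancellation $\int_{\R^N}\nabla\varphi=0$. The paper instead uses cutoffs $\eta_n$ with $\eta_n\equiv1$ on $B_n(0)$, $\eta_n\equiv0$ outside $B_{n+1}(0)$, and $\|\eta_n\|_{C^2(\R^N)}$ uniformly bounded (annuli of fixed width $1$). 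With that choice only the weak bound $|\psi|+|\nabla\psi|\le K(1+|x|^{N+2(s-1)})^{-1}$, obtained from Lemma~\ref{boundedevaluationexistence} together with Proposition~\ref{interchange-der}, is invoked, and the paper shows $\psi_n\to\psi$ and $-\Delta\psi_n\to-\Delta\psi$ in $L^2(\R^N)$, concluding from there. So your sharpened decay is correct but is an artifact of your cutoff scaling rather than an intrinsic obstruction. Conversely, your dominated-convergence route (together with the explicit side assumption $u\in\cL^1_s$, which does hold in all uses of the lemma) is cleaner about the limit-passing than the paper's $L^2$ step, which under the bare hypotheses $u\in H^2_{loc}(\R^N)$, $\Delta u\in\cL^1_{s-1}$ is not fully self-contained.
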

\begin{proof}
 Fix $\psi:=(-\Delta)^{s-1}\phi$. Then $\psi\in C^\infty(\R^N)$ (see by \cite[Proposition 2.7]{S07}) and, by Lemma \ref{boundedevaluationexistence} and Proposition \ref{interchange-der}, there is $K=K(\varphi,N,s)>0$ such that 
 \begin{align}\label{dpsi}
 |\psi(x)|+ |\nabla\psi(x)|\leq \frac{K}{1+|x|^{N+2(s-1)}}\quad \text{ for all }x\in\R^N.
 \end{align}
 Let $(\eta_n)_{n\in\N}\subset C^\infty(\R^N)$ satisfy
 \begin{align}\label{etap}
  0\leq \eta_n\leq 1,\quad \eta_n\equiv 1\quad \text{ in }B_n(0),\quad  
 \eta_n\equiv 0\quad \text{ in }\R^N\backslash B_{n+1}(0),\quad \|\eta_n\|_{C^2(\R^N)}<C
  \end{align}
  for some $C>0$ independent of $n$, and set $\psi_n:=\eta_n \psi\in C^\infty_c(\R^N)$. Then $\psi_n\to \psi$ in $L^2(\R^N)$ and 
  $-\Delta\psi_n=-\Delta \psi\eta_n-\nabla\eta_n\nabla\psi_n-\psi\Delta\eta_n\to -\Delta\psi=(-\Delta)^s\varphi$ in $L^2(\R^N)$,
  by \eqref{etap}, \eqref{dpsi}, and Proposition \ref{interchange-der}.  Therefore, 
 \begin{align*}
  \int_{\R^N} u\,(-\Delta)^s \varphi\ dx=\lim_{n\to\infty}\int_{\R^N} u\,(-\Delta)\psi_n\ dx = \lim_{n\to\infty}\int_{\R^N} -\Delta u\,\psi_n\ dx=\int_{\R^N} -\Delta u\,(-\Delta)^{s-1} \varphi\ dx,
 \end{align*}
 as claimed.
\end{proof}



\end{document}